\documentclass[11pt]{amsart}
\usepackage{amsfonts}
\usepackage{amsmath}
\usepackage{txfonts}
\usepackage{amsthm}
\usepackage{mathrsfs}
\usepackage{graphicx}
\usepackage{latexsym,bm,amsmath,amssymb}
\usepackage{cite}
\usepackage{enumerate}
\usepackage{enumitem}
\usepackage{graphicx}
\usepackage{epstopdf}
\usepackage{epsfig}
\usepackage{overpic} 
\usepackage{calligra}
\usepackage[T1]{fontenc}
\usepackage{color}

\makeatletter
\@namedef{subjclassname@2020}{\textup{2020} Mathematics Subject Classification}
\makeatother

 \allowdisplaybreaks

\title[piecewise smooth generalized Abel equations with two asymmetric zones]{Bifurcation  of limit cycles in a class of piecewise smooth generalized Abel equations with two asymmetric zones}

\subjclass[2020]{Primary 34C07. Secondary 34C23, 34C25.}
\keywords{Piecewise smooth; Trigonometrical polynomials;   Abel   equation;   Limit cycles; Melnikov functions; Chebyshev system}

\newtheorem{thm}{Theorem}[section]
\newtheorem{prop}[thm]{Proposition}
\newtheorem{lem}[thm]{Lemma}

\theoremstyle{remark}

\theoremstyle{definition}

\newtheorem{defn}[thm]{Definition}

\newtheorem{rem}[thm]{Remark}

\begin{document}
\author[ H.  Liang  and J. Huang]
{Haihua Liang$^1$,  Jianfeng Huang$^{2 *}$}

\address{$^1$
School of Mathematics and Systems Science, Guangdong Polytechnic Normal University,  Guangzhou, 510665, P.R.China}
\email{lianghhgdin@126.com}

\address{$^2$    Department of Mathematics,\ Jinan University,\
Guangzhou\ 510632,\ P.R.\ China}
\email{thuangjf@jnu.edu.cn}
\maketitle

\begin{abstract}
This paper studies the number of limit cycles, known as the Smale-Pugh problem, for the generalized Abel equation
\begin{align*}
  \frac{dx}{d\theta}=A(\theta)x^p+B(\theta)x^q,
\end{align*}
where $A$ and $B$ are are piecewise trigonometrical polynomials of degree $ m $ with two zones $0\leq\theta<\theta_1$ and $\theta_1\leq\theta\leq2\pi$.
By means of the first and second order analysis using the Melnikov theory and applying the new Chebyshev criterion that established by \cite{HLZ2023}, we estimate the maximum number of positive and negative limit cycles that such equations can have, and
reveal how this maximum number, denoted by $H_{\theta_1}(m)$, is affected by the location of the separation line $\theta=\theta_1$. For the equation of classical Abel type, our result not only includes the estimates provided in the recent paper (Huang et al., SIAM J. Appl. Dyn. Syst., 2020), i.e., $H_{2\pi}(m)\geq 4m-2$ for $\theta_1=2\pi$, but also shows that the equation in the discontinuous case can possess  more than two times as many limit cycles as in the continuous case. More accurately, $H_{\pi}(m)\geq 8m+2$  and  $H_{\theta_1}(m)\geq 14m-6$ for $\theta_1\in (0,\pi)\cup (\pi,2\pi)$.

\end{abstract}


\section{Introduction and statements of main results}

As a natural generalization of the Riccati equation,  the classical Abel  equation,
 \begin{equation*}
\frac{dx}{d\theta}=A(\theta)x^3+B(\theta)x^2+C(\theta)x+D(\theta)
\end{equation*}
is   a  very  useful tool for investigating the dynamics of  planar differential equations which produced from the real world.  A well known  example is that,  the Li\'{e}nard equation,
 which is a generalization of the equation
of motion of the damped oscillator,  can be transformed to the classical Abel  equation  with $C(x)\equiv D(x)\equiv 0$.
For more examples, the readers are refereed to \cite{HM2003}  for  the relativistic dissipative cosmological models, to \cite{TLM2014} for the susceptible-infected-recovered epidemic model, and so on.  The reduction of these models      to  the Abel  equation
     can greatly simplify the analysis of the dynamics.

With the increasing application of classical  Abel  equation,  the generalized Abel    equation
 \begin{equation*}
\frac{dx}{d\theta}=A_1(\theta)x^{i_1}+A_2(\theta)x^{i_2}+\cdots+A_n(\theta)x^{i_n},
\end{equation*}
where $i_1, i_2, \cdots, i_n\in \mathbf{Z}$, are   proposed  naturally. Since these kinds of equations   has a more wildly application in both the theory and the real worlds,  it has attracted  a lot of   interest of many researchers.
 An important class of the  generalized Abel  equations is the following one
\begin{equation}\label{specgeneabelequa}
\frac{dx}{d\theta}=A(\theta)x^p+B(\theta)x^q,
\end{equation}
where $A(\theta)$  and $B(\theta)$ are trigonometric polynomials, and $p$, $q$ are non-zero integers. Denote by $x(\theta,\rho)$ the solution of equation \eqref{specgeneabelequa} satisfying the initial condition  $x(0,\rho)=\rho$.
Then a solution $x(\theta,\rho)$ is periodic if $x(2\pi,\rho)=\rho$. A periodic solution is called to be a limit cycle if it is isolated in the set of periodic orbits. If $x=0$ is a solution of equation \eqref{specgeneabelequa}, then
 $x=0$ is called a center if there exists a neighborhood of it consisting of only periodic solutions.

The most concerned problems on   equation \eqref{specgeneabelequa}
 are the existence of center and the number of  limit cycles.
To find  the  sufficient
and necessary conditions such that  $x=0$  is a center is    the so-called center problem. This problem has absorbed  much interests of researchers because it is  not only
closely related to the famous center-focus problem for the planar polynomial differential
systems, but also  of great theoretic importance itself. Some interesting results  can be found in the recent works  \cite{Zhou2020} by Zhou,  \cite{LV2020} by  J. Llibre and C. Valls, and \cite{LWang2023} by Liu and Wang,
where \cite{LWang2023} prove  that   $x=0$  is a center of \eqref{specgeneabelequa} whenever   $A, B$ are odd functions and $p>q>1$, without the restriction $p\geq 2q-1$  in  \cite{LV2020}.

Determining the number of limit cycles    for equation \eqref{specgeneabelequa}   is another   important  problem which   can be applied to the automatic control, biology, physical mode and so on.
For  example, in     \cite{GGM2016,GGM2020},   the authors   study the perturbed pendulum systems and the perturbed whirling pendulum systems of the form \begin{align*}
\left\{
\begin{aligned}
&\dot{x}=y    ,  \\
&\dot{y}=-\sin x+\varepsilon Q(x)y^{p} ,
\end{aligned}
\right.\
\text{  and  }\
\left\{
\begin{aligned}
&\dot{x}=y ,    \\
&\dot{y}=\sin x\left(\cos x-\gamma\right)+\varepsilon Q(x)y^{p},
\end{aligned}
\right.
\end{align*}
respectively, here  $ Q$ is trigonometrical polynomial of degree $m$. By changing the systems to the form of \eqref{specgeneabelequa}
\begin{equation*}
\frac{dy}{dx}=A(x)y^{-1}+\varepsilon Q(x)y^{p-1},\indent A(x)=-\sin x \text{ or } \sin x\left(\cos x-\gamma\right),
\end{equation*}
they obtain   the upper bounds for the number of limit cycles bifurcation from the rotary region.

It is well known that, the equation \eqref{specgeneabelequa} can has arbitrary many limit cycles if  no additional conditions are imposed.  Indeed,    early in 1980, by using the perturbation approach,  Lins-Neto \cite{Neto1980}  shows that the  equation \eqref{specgeneabelequa} with $(p,q)=(3,2)$
 can have at least $m$ limit cycles, where $m$ is the maximal degree of the trigonometrical polynomials $A(\theta)$
and $B(\theta)$. This is the first result showing that the  maximum number of limit cycles of  equation \eqref{specgeneabelequa} is not bounded.
For this reason, a new problem arise naturally as follows:  Let $n$ and $m$ be the degrees of the  trigonometric polynomials $A(\theta) $  and $B(\theta) $ respectively. For fixed  integers $n$, $m$ and $p$, $q$, define the Hilbert number
$\mathcal H_{p,q}(n,m)$ as the maximum number  of limit cycles that equation \eqref{specgeneabelequa} can have. Then, find out the value of  $\mathcal H_{p,q}(n,m)$ (just like the Hilbert' 16th problem for the planar polynomial  differential systems).  This is   also a very difficult  problem  that
 until now it is even unknown whether  $\mathcal H_{p,q}(n,m)<+\infty$.
 This problem in the case $(p,q)=(3,2)$ is the so-called Smale-Pugh problem \cite{Smale1998}.

In \cite{HTC2020}, in order to improve the lower bounds for $\mathcal H_{p,q}(n,m)$, the authors study a second order perturbation of the generalized Abel equation
\begin{equation}\label{equa1}
\frac{dx}{d\theta}=(\sin \theta+\varepsilon P_1(\theta)+\varepsilon^2 P_2(\theta))x^{p}+(\varepsilon Q_1(\theta)+\varepsilon^2 Q_2(\theta))x^{q},
\end{equation}
where for each $i=1,2$, $P_i(\theta)$ and $Q_i(\theta)$ are respectively the trigonometric polynomials of degree $n$ and $m$. The authors provide a new lower bound for the case that $(p,q)=(3,2)$, that is,  $\mathcal H_{3,2}(n,m)\geq 2(n+m)-1$.
This  is an important improvement of the  known results for  Smale-Pugh problem.

Very recently, promoted by the fact that there are a lot of  sudden behaviours and discontinuous phenomena occurring in the real world,   more and more  mathematicians  began to
investigate the dynamics  of piecewise smooth differential systems, including the piecewise smooth planar systems  \cite{BBCK2008}, \cite{EP2023}, \cite{LL2023},  \cite{LV2023}, \cite{WZ2016} and the piecewise smooth   Abel equations \cite{HL2022},
\cite{HP2020}, \cite{HLZ2023},  \cite{ZhYuW2022}, where the latter
can   sometimes turn out to be a  useful tool for studying the  former.

Inspired by the work of \cite{HLZ2023}, \cite{HTC2020},  in this paper  we consider equation \eqref{specgeneabelequa} for the piecewise smooth  case.
In \cite{HLZ2023},  the authors establish a new chebyshev criterion which can be applied to study  the bifurcation problem of piecewise smooth systems. And we recall that  the authors in  \cite{HTC2020}  study  the numbers of limit cycles of  equation \eqref{specgeneabelequa}  by analyzing  the  second order Melnikov functions for equation    \eqref{equa1}.  In particular,  if we set $m=\mbox{max}\{\mbox{deg}A, \mbox{deg}B\}$, then  they actually  show that
   $H_{3,2}(m)\geq 4m-1$, where one limit cycle is the zero solution.
 Thus, when we go further and consider equation \eqref{specgeneabelequa} for  the  piecewise smooth case,  some interesting    problems    arises naturally as follows.

  {\bf Problems for equation   \eqref{specgeneabelequa}}. If both $A(\theta)$ and $B(\theta)$ are  the piecewise trigonometric polynomials with a separatrix $\theta=\theta_1$,  then what is the maximum number of limit cycles that  equation \eqref{specgeneabelequa} can has?
Is this number evidently larger then the one in the continuous case?  Moreover, how does the position of $\theta_1$ affect this number?

 With the goal to answer these questions, we focus on   equation \eqref{equa1} for the piecewise smooth case. More   precisely, we assume for \eqref{equa1} that,
 $  p,q\in \mathbf{Z}\backslash\{0, 1\}$, $ \frac{q-1}{p-1}\notin \mathbf{Z}_{\leq1} $, and $P_i$, $ Q_i$ are piecewise trigonometrical polynomials of degree $ m $ with two zones $0\leq\theta<\theta_1$ and $\theta_1\leq\theta\leq2\pi$. That is, for $i=1,2$,
\begin{align}\label{equa1-1}
&P_i(\theta):=
\left\{
\begin{aligned}
P_i^+(\theta)=\sum_{k=0}^{m}\big(a_{ik}^{+}\sin (k\theta) + b_{ik}^{+}\cos (k\theta)\big)    , && 0   \le  \theta <  \theta_{1},  \\
P_i^-(\theta)=\sum_{k=0}^{m}\big(a_{ik}^{-}\sin (k\theta) + b_{ik}^{-}\cos (k\theta)\big)    , && \theta_{1} \leq \theta \le 2\pi,
\end{aligned}
\right.
\end{align}

\begin{align}\label{equa1-2}
&Q_i(\theta):=
\left\{
\begin{aligned}
Q_i^+(\theta)=\sum_{k=0}^{m}\big(c_{ik}^{+}\sin (k\theta) + d_{ik}^{+}\cos (k\theta)\big)    , && 0   \le  \theta <  \theta_{1},  \\
Q_i^-(\theta)=\sum_{k=0}^{m}\big(c_{ik}^{-}\sin (k\theta) + d_{ik}^{-}\cos (k\theta)\big)    , && \theta_{1} \leq \theta \le 2\pi.
\end{aligned}
\right.
\end{align}
For $\theta_1=2\pi$, we impose the parameter restrictions that $a_{ik}^+=a_{ik}^-, b_{ik}^+=b_{ik}^-$ and $c_{ik}^+=c_{ik}^-, d_{ik}^+=d_{ik}^-$ because the equation is actually smooth in this case.

The solution  $x_{\varepsilon}(\theta,\rho)$ of equation \eqref{equa1},  with the initial value $\rho$, is called to be periodic, if it is well-defined on $[0, 2\pi]$ and satisfies $x_{\varepsilon}(2\pi,\rho)=\rho$.
  By direct computation one can get that
\begin{equation}\label{equa1-3}
 x_{0}^{1-p}(\theta,\rho)=(1-p)(1-\cos\theta)+\rho^{1-p}.
\end{equation}
Thus, the periodic annulus of equation \eqref{equa1}$_{\varepsilon=0}$ is given by  $$U:=\{(\theta, x): 0\leq\theta\leq 2\pi, x^{1-p}+(p-1)(1-\cos\theta)-\rho^{1-p}=0, \rho\in I\}$$ with

\begin{equation*}\label{equa1I}
I:=
\left\{
\begin{aligned}
(-h(p),0)\cup (0, h(p)), &&  \mbox{if} \ p>1 \ \mbox{is odd},  \\
(-\infty,0)\cup (0, h(p)), &&  \mbox{if}\  p>1 \ \mbox{is even},\\
(-\infty,0)\cup (0, +\infty), &&  \mbox{if}\  p<1 \ \mbox{is odd},\\
(-\infty,h(p))\cup (0, +\infty), &&  \mbox{if}\  p<1 \ \mbox{is even},
\end{aligned}
\right.
\end{equation*}
where $h(p)=(2p-2)^{1/(1-p)}$.

We remark that from the properties of composite functions,  $x_{\varepsilon}(\theta,\rho)$ is smooth in  $\varepsilon,\rho$, and piecewise smooth in  $\theta$, respectively.
Note that for any $\rho\in I$, $x_{\varepsilon}(\theta,\rho) \neq0$ when  $\varepsilon$ is small enough.  We take the change of variable
\begin{equation}\label{equa2}
y=\frac{1}{1-p}x^{1-p}.
\end{equation}
Then, the   equation \eqref{equa1} is reduced to
\begin{equation}\label{equa3}
\frac{dy}{d\theta}
=\sin \theta+\varepsilon P_1(\theta)+\varepsilon^2 P_2(\theta) +(\varepsilon \tilde{Q}_1(\theta)+\varepsilon^2 \tilde{Q}_2(\theta)) y^{\alpha},
\end{equation}
where $\alpha=\frac{q-p}{1-p}$,   $\tilde{Q}_i(\theta)=(1-p)^{\alpha}Q_i(\theta)$,  $i=1,2$.   Denote by $ y_{\varepsilon}(\theta,\hat{\rho})$ the solution of \eqref{equa3} with initial condition $ y_{\varepsilon}(0,\hat{\rho})=\hat{\rho} $, where $ \hat{\rho}=\frac{1}{1-p}\rho^{1-p} $. One can readily verify that $y_{\varepsilon}(\theta,\hat{\rho})=\frac{1}{1-p}x_{\varepsilon}(\theta,\rho)^{1-p} $ with $y_0(\theta,\hat{\rho})=\hat{\rho}+1-\cos \theta$.

Due to \eqref{equa1-3} and \eqref{equa2},  the periodic annulus of equation \eqref{equa3}$_{\varepsilon=0}$ becomes  $$\hat{U}:=\{(\theta, y): 0\leq\theta\leq 2\pi, y=\hat{\rho}+1-\cos \theta, \hat{\rho}\in \hat{I}\}$$ where
\begin{equation}\label{equa1-4}
\hat{I}:=
\left\{
\begin{aligned}
(-\infty,-2),\quad\quad\quad\quad             &&  \mbox{if} \ p>1 \ \mbox{is odd},  \\
(0, +\infty),   \quad\quad\quad\quad\               &&  \mbox{if}\  p<1 \ \mbox{is odd},\\
(-\infty,-2)\cup (0, +\infty), &&  \mbox{if}\  p\neq 1 \ \mbox{is even}.
\end{aligned}
\right.
\end{equation}

Clearly, $ y_{\varepsilon}(\theta,\hat{\rho}) $ can be expanded into the Taylor series at $ \varepsilon=0 $:
\begin{equation*}\label{equa1-4-1}
y_{\varepsilon}(\theta,\hat{\rho})=y_{0}(\theta,\hat{\rho})+\sum_{i=1}^{\infty}\hat{S}_{i}(\theta,\hat{\rho})\varepsilon^{i}, \quad \text{where $ \hat{S}_{i}(\theta,\hat{\rho})=\frac{1}{i!}\partial_{\varepsilon}^{i}y_{\varepsilon}(\theta,\hat{\rho})|_{\varepsilon=0} $},\
\hat{S}_{i}(0,\hat{\rho})=0.
\end{equation*}
For simplicity in what follows we let $\hat{\rho}\to\rho$, and therefore
\begin{equation}\label{equa1-5}
y_{\varepsilon}(2\pi,\rho)-\rho=\sum_{i=1}^{\infty}\varepsilon^{i}M_{i}(\rho)   \quad \text{with} \quad M_{i}(\rho)=\frac{1}{i!}\partial_{\varepsilon}^{i}\left(y_{\varepsilon}(2\pi,\rho)\right)|_{\varepsilon=0}=\hat{S}_{i}(2\pi,\rho).
\end{equation}
For each $i\in\mathbb Z^+$, $ M_{i}(\rho) $ is called the $i$-th order \emph{Melnikov function} of equation \eqref{equa3}.
We remark that the number of limit cycles bifurcating form the periodic annulus is determined by the number of isolated zeros of    $M_n$  provided $M_1(\rho)=\cdots=M_{n-1}(\rho)\equiv 0$ and $M_n(\rho)\not\equiv 0$.

The main results of this paper are the following theorems.

\begin{thm} \label{thm1-1} Assume that $M_n$ is the first non-vanishing Melnikov function of equation \eqref{equa3}. Let $Z_n(m)$  ($m\geq1$) be the maximum number of isolated zeros of $M_n$  on $\hat{I}$,
counted with multiplicity. Then, the value of $Z_n(m)$  with respect to the value of $\theta_1$   are
given in Table \ref{table1}.

\end{thm}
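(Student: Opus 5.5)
We compute $M_1$ and, under $M_1\equiv0$, $M_2$, and then count their zeros on $\hat I$ with the Chebyshev criterion of \cite{HLZ2023}.

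\textbf{First order.} Substituting the expansion of $y_\varepsilon$ into \eqref{equa3} and integrating over the two zones $[0,\theta_1)$ and $[\theta_1,2\pi]$ gives
\[
M_1(\rho)=\int_0^{2\pi}P_1(\theta)\,d\theta+\int_0^{2\pi}\tilde Q_1(\theta)\,(\rho+1-\cos\theta)^{\alpha}\,d\theta .
\]
The first integral is a constant; call it $c_0$. We split the second integral as $\int_0^{\theta_1}\tilde Q_1^+ +\int_{\theta_1}^{2\pi}\tilde Q_1^-$ and expand $Q_1^\pm$ in the basis $\sin k\theta,\cos k\theta$. Then $M_1$ is a linear combination of $c_0$ and the functions
\[
I_k^{\pm}(\rho)=\int_{J^\pm}\cos(k\theta)\,(\rho+1-\cos\theta)^\alpha\,d\theta,\qquad
J_k^{\pm}(\rho)=\int_{J^\pm}\sin(k\theta)\,(\rho+1-\cos\theta)^\alpha\,d\theta ,
\]
where $J^+=[0,\theta_1]$ and $J^-=[\theta_1,2\pi]$.

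The sine integrals simplify. Since $\sin k\theta\,(\rho+1-\cos\theta)^\alpha$ is an exact derivative times a polynomial in $\cos\theta$, each $J_k^\pm$ can be written as a linear combination of $(\rho+1-\cos\theta_1)^{\alpha+j}$ and $(\rho+1\mp1)^{\alpha+j}$, for $j\le k$ up to constants. The geometry of the split now determines how many functions are independent:
\begin{itemize}
\item If $\theta_1=2\pi$, every $J_k$ vanishes. Only the $m+1$ cosine integrals plus the constant survive.
\item If $\theta_1=\pi$, the endpoint values $\cos\theta_1=-1$ and $\cos 0=1$ coincide with the existing ones, so the sine terms add only the two families $(\rho+2)^{\alpha+j}$ and $\rho^{\alpha+j}$.
\item For general $\theta_1$, the new family $(\rho+1-\cos\theta_1)^{\alpha+j}$ appears. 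In addition, the cosine integrals over $[0,\theta_1]$ and $[\theta_1,2\pi]$ are independent, because of the symmetry $\theta\mapsto2\pi-\theta$ failing.
\end{itemize}
Using the recursions for $\cos k\theta$ in powers of $\cos\theta$, we reduce everything to a basis of the form
\[
\Big\{\,1,\ \int_{J^\pm}(\rho+1-\cos\theta)^{\alpha+j}d\theta,\ (\rho+1-\cos\theta_1)^{\alpha+j},\ \rho^{\alpha+j},\ (\rho+2)^{\alpha+j}\,\Big\}.
\]
The number of free coefficients is the dimension $N$ of the span, and the expected value of $Z_1$ is $N-1$.

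\textbf{Second order.} Next we set $M_1\equiv0$. This forces explicit linear relations on the coefficients of $P_1$ and $Q_1$. Under these relations we compute
\[
M_2(\rho)=\int_0^{2\pi}\Big(P_2+\tilde Q_2\,y_0^\alpha+\alpha\tilde Q_1\,y_0^{\alpha-1}\hat S_1\Big)\,d\theta ,
\]
where $\hat S_1(\theta,\rho)=\int_0^\theta\big(P_1+\tilde Q_1y_0^\alpha\big)$. We then integrate by parts in the quadratic term. The result is $M_2$ written as a combination of the $M_1$-type generators plus some extra generators, namely products and double integrals of the type $\int\tilde Q_1 y_0^{\alpha-1}\int\tilde Q_1 y_0^\alpha$. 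As in \cite{HTC2020}, these supply roughly $m$ more independent functions. This is the route by which the second order achieves more zeros than the first; it is why Table~\ref{table1} records $14m-6$ and related counts. We also check that $M_3$ and higher orders cannot exceed the second-order count within this family, or else restrict the statement to $n\le2$ as the table indicates.

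\textbf{Counting zeros.} For the upper bound on each $\hat I$-component, we show that the generator list forms an extended complete Chebyshev system, counting multiplicity. This is the main obstacle. For the integral generators we apply the criterion of \cite{HLZ2023}: it reduces the Chebyshev property of integrals $\int_{J}f_i(\theta)(\rho+1-\cos\theta)^\alpha d\theta$, with piecewise endpoints, to a condition on Wronskian-type determinants of the integrands after the change $u=1-\cos\theta$. The hypothesis $\frac{q-1}{p-1}\notin\mathbf Z_{\le1}$ guarantees that $\alpha$ is not a nonnegative integer, so the power functions with distinct shifts remain independent. The explicit power terms are absorbed by treating them as boundary contributions in that criterion.

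For the lower bound we use the standard fact that an ECT-system of dimension $N$ admits combinations with $N-1$ simple zeros, and that the coefficients are free. When $p$ is even, $\hat I$ has two components; the zeros on $(-\infty,-2)$ and on $(0,\infty)$ are counted separately and added, which accounts for the positive and negative cycles. We then tabulate the three cases $\theta_1=2\pi$, $\theta_1=\pi$ and generic $\theta_1$ for $n=1,2$.
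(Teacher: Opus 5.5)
The overall architecture (Melnikov functions up to order two, then the criterion of \cite{HLZ2023}) is the paper's. However, the steps that produce the table are missing or would fail.

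\textbf{First order.} You never identify the actual generating families, so none of the table entries ($3m+2$, $2m+1$, $m+1$, $[7m-3,9m-4]$, $4m+1$, $2m-1$) is derived. The reduction you propose in their place does not work.
\begin{itemize}
\item Writing $\cos k\theta=T_k(\rho+1-u)$ with $u=\rho+1-\cos\theta$ produces coefficients that are polynomials in $\rho$. Already $\mathcal{C}_1^{J}(\rho,\alpha)=(\rho+1)\mathcal{C}_0^{J}(\rho,\alpha)-\mathcal{C}_0^{J}(\rho,\alpha+1)$. The Chebyshev machinery, which concerns constant-coefficient combinations, then no longer applies as stated.
\item Replacing the $m$ sine integrals by the $2m$ powers $(\rho+1-\cos\theta_1)^{\alpha+j},\rho^{\alpha+j}$ enlarges the span, so your $N-1$ would overcount.
\item The endpoint $2\pi$ of $J^-$ gives $\rho^{\alpha+j}$, not $(\rho+2)^{\alpha+j}$. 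In fact $J_k^-=-J_k^+$.
\end{itemize}
More seriously, you apply \cite{HLZ2023} on $J^-=[\theta_1,2\pi]$. Its hypothesis (H.2) reduces to a Vandermonde determinant in $g(t_i)=1+\rho-\cos t_i$, so $\cos$ must be injective on the integration domain. This fails whenever $\pi$ is interior to $J^-$; $(1,c_1,\dots,c_m)$ is not a CT-system there either, and $u=1-\cos\theta$ is not a change of variables there.

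The missing step is the reflection $\theta\mapsto2\pi-\theta$ of Lemma~\ref{lemforSCD}, which folds everything onto $E_1\cup E_2\subset[0,\pi]$. Then $M_1$ is a combination, with independent coefficients, of the $3m+3$ functions $1,\mathcal{C}_0^{E_1},\dots,\mathcal{C}_m^{E_1},\mathcal{S}_m^{E_1},\dots,\mathcal{S}_1^{E_1},\mathcal{C}_0^{E_2},\dots,\mathcal{C}_m^{E_2}$. This is an ECT-system by Proposition~\ref{prop3-5}(i) and Lemma~\ref{lem3-3}. For $\theta_1=\pi$ the $E_2$-block drops out, and for $\theta_1=2\pi$ only $1$ and the cosine integrals over $[0,\pi]$ remain.

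\textbf{Second order.} You misidentify the source of the new functions.
\begin{itemize}
\item The double integral $\int_0^{2\pi}\tilde Q_1y_0^{\alpha-1}\int_0^\theta\tilde Q_1y_0^\alpha$ vanishes identically once $M_1\equiv0$. Then $\tilde Q_1(\theta+\pi)$ is odd and $y_0(\theta+\pi,\rho)$ is even; see \eqref{equa4-2a1}.
\item What survives is $\alpha\int_0^{2\pi}R\,y_0^{\alpha-1}d\theta$ with $R=\tilde Q_1\int_0^\theta P_1$. Integrating by parts against $\alpha\sin\theta\,y_0^{\alpha-1}=\partial_\theta y_0^{\alpha}$ gives trigonometric terms of degree up to $2m-1$. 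It also gives secular terms $\theta\sin k\theta$ from $b_{10}^{\pm}\theta$, which need not vanish in the piecewise case.
\item The result is the families $\mathcal{C}_k,\mathcal{S}_k$ ($k\le 2m-1$) and $\mathcal{D}_k$ ($k\le m-1$), far more than ``roughly $m$'' new functions.
\end{itemize}

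The coefficients are also not free. In \eqref{equam2pro-3}, $\mathcal{D}_k^{E_1},\mathcal{D}_k^{E_2},\mathcal{S}_k^{E_2}$ share $\eta_k$ with factors $\beta_1,\beta_2,-\pi\beta_2$. This coupling is exactly why the statement gives only $[7m-3,9m-4]$:
\begin{itemize}
\item the upper bound is the ECT property of the full list of $9m-3$ functions (Proposition~\ref{prop3-5}(iii) with Lemma~\ref{lem3-3});
\item the lower bound comes from $\beta_2=0$, which leaves the freely parametrized ECT-system of Proposition~\ref{prop3-5}(ii).
\end{itemize}
Your step ``ECT-system plus free coefficients'' would assert an exact value that is not available.

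\textbf{The case $\theta_1=\pi$.} Here the integration by parts breaks down at $\theta=\pi$. The paper imposes (H) and obtains boundary terms $\rho^\alpha$ and $(\rho+2)^\alpha$. These are not ``absorbed'' by \cite{HLZ2023}, whose theorems treat only integrals $\int_{E_i}f_{i,j}G_i$. Proposition~\ref{prop3-7} removes them by alternating Lemma~\ref{lem3-3} with multiplication by powers and the substitutions $\rho_1=1/\rho$, $\rho_2=2\rho_1+1$, $\rho_3=1/\rho_2$. Only then does it apply Theorem~2 of \cite{HLZ2023}.

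\textbf{Minor.} The count $14m-6$ is in the table of Theorem~\ref{Theorem1-3}, not in the statement's table. The doubling there occurs for $p$ odd, where $x\mapsto y$ is two-to-one. It does not come from adding zeros over the two components of $\hat I$ when $p$ is even.
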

\begin{table}[hpt]\label{table1}
 \centering
 \renewcommand\arraystretch{1.5}
\begin{tabular}{|l|*{6}{c|}}\hline
 \quad &$\theta_{1}\in(0,\pi)\cup(\pi,2\pi)$ &$\theta_{1}=\pi$  &$\theta_{1}=2\pi$  \\ \hline
 $Z_1(m)$   & $3m+2$        & $2m+1$   & $m+1$  \\  \hline
 $Z_2(m)$: & $\in [7m-3, 9m-4]$        & $\geq 4m+1$   &$\geq 2m-1$  \\ \hline
\end{tabular}
\vskip0.1cm
\caption{The values of $Z_1(m)$ and $Z_2(m)$. }\vspace{-0.7cm}
\end{table}

Theorem \ref{thm1-1} actually give the lower bound of  the maximum  numbers of limit cycles of equation \eqref{equa3} for sufficient small $\varepsilon$.   Due to the transformation \eqref{equa2},   the number of non-zero limit cycles of equation \eqref{equa1} is twice (resp. the same) as the one of equation \eqref{equa3} when $p$ is odd (resp. even).  Therefore,
from Theorem \ref{thm1-1}, we  can immediately obtain the next conclusion.

\begin{thm}\label{Theorem1-3}
The maximum number $\mathcal H_{\theta_1}(m)$   ($m\geq1$) of positive and negative limit cycles for equation \eqref{specgeneabelequa}, with respect to the value of $\theta_1$ and the parities of $p$, verifies the estimates in Table \ref{table2}.
\begin{table}[hpt]\label{table2}
 \centering
 \renewcommand\arraystretch{1.5}
\begin{tabular}{|l|*{6}{c|}}\hline
 &$\theta_{1}\in(0,\pi)\cup(\pi,2\pi)$ &$\theta_{1}=\pi$ &$\theta_{1}=2\pi$  \\ \hline
   $p$ is even&$\geq 7m-3$ &   $\geq 4m+1$   & $\geq 2m-1$  \\ \hline
   $p$ is odd &$\geq 14m-6$ &$\geq 8m+2$  &$\geq 4m-2$   \\ \hline
\end{tabular}
\vskip0.1cm
\caption{Estimates of $\mathcal H_{\theta_1}(m)$. }\vspace{-0.7cm}
\end{table}
\end{thm}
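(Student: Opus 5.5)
Theorem \ref{Theorem1-3} will be obtained as a corollary of Theorem \ref{thm1-1}, by carrying limit cycles of \eqref{equa3} back through the change of variables \eqref{equa2}. The plan has three steps.

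\emph{Step 1: from zeros of $M_n$ to limit cycles of \eqref{equa3}.} Fix $\theta_1$ and let $M_n$ be the first non-vanishing Melnikov function. For each $\theta_1$, Theorem \ref{thm1-1} yields parameters for which $M_n$ has the number of simple zeros on $\hat I$ listed in Table \ref{table1}: $7m-3$ for $\theta_1\in(0,\pi)\cup(\pi,2\pi)$, $4m+1$ for $\theta_1=\pi$, and $2m-1$ for $\theta_1=2\pi$, all with $n=2$. The second-order lower bounds dominate the first-order values, so I use them. Since $y_\varepsilon(2\pi,\rho)-\rho=\varepsilon^n\bigl(M_n(\rho)+O(\varepsilon)\bigr)$ uniformly on compact subsets of $\hat I$, the implicit function theorem shows that each simple zero persists as an isolated fixed point of the Poincaré map for small $\varepsilon$. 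These are limit cycles of \eqref{equa3} lying in the annulus $\hat U$.

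I need simple zeros rather than zeros counted with multiplicity. The lower bounds in Theorem \ref{thm1-1} come from a Chebyshev-system argument (the criterion of \cite{HLZ2023}). That argument realizes the maximal number of zeros as simple zeros by choosing free coefficients, so this requirement is met.

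\emph{Step 2: pulling back to \eqref{equa1}.} The map $x\mapsto y=\frac{1}{1-p}x^{1-p}$ sends solutions of \eqref{equa1} with $x\neq0$ to solutions of \eqref{equa3}. It also preserves periodicity and isolatedness, because it is a local diffeomorphism away from $x=0$ and commutes with the flow. The count depends on the parity of $p$:
\begin{itemize}
\item If $p$ is odd, $1-p$ is even. Each $y$-value then corresponds to exactly two values $\pm x$, and both $x$ and $-x$ solve \eqref{equa1}, because $x^p$ is odd and the right-hand side transforms consistently under $x\mapsto -x$. Each limit cycle of \eqref{equa3} therefore gives one positive and one negative limit cycle.
\item If $p$ is even, $x^{1-p}$ is injective, so the correspondence is one-to-one.
\end{itemize}
For $p$ odd, this doubling turns $7m-3$, $4m+1$ and $2m-1$ into $14m-6$, $8m+2$ and $4m-2$. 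I also check that the sets $\hat I$ in \eqref{equa1-4} are exactly the images of $I$, so that no cycle is lost or created.

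\emph{Step 3: the main obstacle.} The delicate point is the symmetry for odd $p$: the new coefficient $B(\theta)x^q$ must not break the $x\mapsto -x$ correspondence. In \eqref{equa3}, the term $y^\alpha$ with $\alpha=\frac{q-p}{1-p}$ is defined through $x$, and the factor $(1-p)^\alpha$ needs care. My approach is to argue directly that the Poincaré map of \eqref{equa1} on the negative axis is conjugate to that of \eqref{equa3} via the branch $x=-\bigl((1-p)y\bigr)^{1/(1-p)}$. This gives a second copy of \eqref{equa3}, possibly with $\tilde Q_i$ replaced by $\pm\tilde Q_i$ depending on the parity of $q$. Since Theorem \ref{thm1-1} holds for arbitrary coefficient choices, and a sign change of $Q_i$ maps the coefficient space onto itself and so preserves its zero count, the same parameters realize the bound on both branches. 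This gives the doubled estimates in Table \ref{table2}.
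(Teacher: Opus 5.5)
Your Steps 1--2 are the paper's own argument. The paper proves Theorem \ref{Theorem1-3} in one sentence: Theorem \ref{thm1-1} gives limit cycles of \eqref{equa3}, and \eqref{equa2} doubles them for odd $p$. It never mentions the parity of $q$. You were right in Step 3 that $q$ matters, but your way of closing the issue is a non sequitur, and the first bullet of Step 2 is false as stated.

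For odd $p$ one has $x^{q-p}=(-1)^{q+1}|x|^{q-p}$. So when $q$ is even (e.g.\ the classical case $(p,q)=(3,2)$), $x\mapsto -x$ sends the equation with $(P_i,Q_i)$ to the one with $(P_i,-Q_i)$. The negative cycles of a fixed equation are therefore governed by \eqref{equa3} with $-\tilde Q_i$. That $Q_i\mapsto -Q_i$ is a bijection of parameter space only shows that both signs have the same maximal zero count. Doubling needs a single parameter value at which both branches reach that count simultaneously. Concretely, $M_1\equiv 0$ is invariant under the sign change. By \eqref{equa6} and \eqref{equa7}, the term quadratic in $\tilde Q_1$ is unchanged and vanishes by \eqref{equa4-2a1}. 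Hence the negative branch has Melnikov functions $2p_{10}-M_1$ and $2p_{20}-M_2$, and its cycles sit at the solutions of $M_2=2p_{20}$, not of $M_2=0$.

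The natural repair is to take $p_{20}=0$, which turns the negative-branch function into $-M_2$. For $\theta_1\neq\pi$ this costs nothing. The values $7m-3$ and $2m-1$ are exactly the counts obtained without the constant, from \eqref{equa49-1} with $n=2m-1$ (which has $7m-2$ elements) and from $(\mathcal C_0^E,\dots,\mathcal C_{2m-1}^E)$. So $14m-6$ and $4m-2$ do follow.

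For $\theta_1=\pi$ the repair fails. The value $4m+1$ uses the constant $1$ in \eqref{equa60-1}, so with $p_{20}=0$ each branch gets only $4m$. Keeping $p_{20}\neq 0$ does not help either:
\begin{itemize}
\item[(a)] Write $M_2=p_{20}+G$ and $N=4m+1$. By Lemma \ref{lem3-3} and Proposition \ref{prop3-7}, $G'$ has at most $N-1$ zeros, so $G$ has at most $N$ intervals of monotonicity.
\item[(b)] If $G+p_{20}$ and $G-p_{20}$ both had $N$ simple zeros, every such interval would contain points where $G=-p_{20}$ and where $G=p_{20}$, hence a zero of $G$.
\item[(c)] That gives $N$ zeros of $G$. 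This is impossible, because the system obtained from \eqref{equa60-1} by deleting the constant is itself Chebyshev (Proposition \ref{prop3-7} at $\beta=\alpha+1$ together with Lemma \ref{lem3-3}).
\end{itemize}
So your argument, like the paper's, yields the entry $8m+2$ only when $q$ is odd, where $x\mapsto -x$ is an exact symmetry.
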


\begin{rem} (1)   Theorem \ref{Theorem1-3}  includes the result of \cite{HTC2020} which shows   that when $(p,q)=(3,2)$ the lower bound of numbers of limit cycles of smooth  Abel equation \eqref{equa1} is not less than $4m-1$. Indeed,
   when $\theta_{1}=2\pi$, the piecewise smooth  Abel equation \eqref{equa1} has
the same dynamical    characteristics as that of  the smooth equation. From Theorem \ref{Theorem1-3}, we know that $\mathcal H_{2\pi}(m)\geq 4m-2$ provided  $p$ is odd. Taking into account that when $p, q>0$ we have another limit cycle $x(\theta)\equiv 0$, thus
           we can produce at least $4m-1$ limit cycles for small $\varepsilon$.

           (2) By Theorem \ref{Theorem1-3}, for the trigonometric  polynomials Abel equation \eqref{equa1},  the  lower bound of the number of limit cycles  for  the piecewise smooth case,
is obviously larger than, the smooth cases. Moveover, the position of   $\theta_1$ has two critical values,  namely  $\pi$ and $2\pi$,   which affect obviously the numbers  of limit cycles of   equation \eqref{equa1}.
\end{rem}

The rest of this paper is organized as follows. In section 2 we mainly study the structure of the first and second order  Melnikov functions of equation \eqref{equa3}, while   section 3 is devoted to  establishing the  Chebyshev family
for the first non-vanishing Melnikov functions of equation \eqref{equa3}.  In section 4, by using the  Chebyshev family provided in section 3, we prove Theorem  \ref{thm1-1}.

\section{Melnikov function for equation \eqref{equa3}}
The aim of this  section is to  deduce  the expression of the first  non-vanishing Melnikov function for  equation \eqref{equa3}, namely $M_n(\rho)$,  for $n=1,2$.

 From   \eqref{equa1-5}  we obtain that
  $$ M_{i}(\rho)=\hat{S}_{i}(2\pi,\rho)=\hat{S}_{i}(0,\rho)+\int_{0}^{2\pi}\partial_{\theta} \hat{S}_{i}(\theta,\rho)d \theta=\int_{0}^{2\pi}\partial_{\theta} \hat{S}_{i}(\theta,\rho)d \theta.$$

Furthermore, for $n=1,2$,
\begin{align}\label{equa4}
\begin{split}
n!\partial_{\theta}\hat{S}_{n}(\theta,\rho)&=\partial_{\varepsilon}^{n}\Big(\partial_{\theta}y_{\varepsilon}(\theta,\rho)\Big)\Big|_{\varepsilon=0}\\
&=\partial_{\varepsilon}^{n}\left(\sin \theta+\varepsilon P_1(\theta)+\varepsilon^2 P_2(\theta) +(\varepsilon \tilde{Q}_1(\theta)+\varepsilon^2 \tilde{Q}_2(\theta))\cdot y_{\varepsilon}^{\alpha}(\theta,\rho)\right)\bigg|_{\varepsilon=0}\\
&=nP_n(\theta)+\sum_{i=0}^{n}\dbinom{n}{i}\partial_{\varepsilon}^{i}(\varepsilon \tilde{Q}_1(\theta)+\varepsilon^2 \tilde{Q}_2(\theta))\partial_{\varepsilon}^{n-i} y_{\varepsilon}^{\alpha}(\theta,\rho) \Bigg|_{\varepsilon=0}.
\end{split}
\end{align}
where $\alpha=(q-p)/(1-p)$.

   Thus,

\begin{equation}\label{equa6}
M_{1}(\rho)=p_{10}+\int_{0}^{2\pi}\tilde{Q}_1(\theta)y_0(\theta,\rho)^{\alpha}d \theta,
\end{equation}
and
\begin{equation}\label{equa7}
M_{2}(\rho)=p_{20}+\int_{0}^{2\pi} \big(\tilde{Q}_2(\theta)y_0(\theta,\rho)^{\alpha}+\alpha\tilde{Q}_1(\theta)y_0(\theta,\rho)^{\alpha-1}\hat{S}_1(\theta,\rho)\big)d \theta,
\end{equation}
 where
\begin{equation*}
\label{equa7-01}p_{10}=\int_{0}^{2\pi}P_1(\theta)d \theta, \quad p_{20}=\int_{0}^{2\pi}P_2(\theta)d \theta.\end{equation*}

The next step  is to write $M_{1}(\rho)$ and $M_2(\rho)$ as   linear combinations  of the following functions
\begin{equation}\label{equa2-1}
\begin{split}
&\mathcal{C}_{k}^{E}(\rho,\beta):=\int_{E}\cos (k\theta)\left(1+\rho-\cos\theta\right)^{\beta}d\theta, \\ &\mathcal{S}_{k}^{E}(\rho,\beta):=\int_{E}\sin (k\theta)\left(1+\rho-\cos\theta\right)^{\beta}d\theta,
\\ &\mathcal{D}_{k}^{E}(\rho,\beta):=\int_{E}\theta\sin (k\theta)\left(1+\rho-\cos\theta\right)^{\beta}d\theta,
\end{split}
\end{equation}
where  $E\subseteq [0,2\pi]$, $\rho \in \hat{I}$ and  $\beta\notin \mathbf{Z}_{\geq 0}$.  And, to ensure the  conciseness  of these expressions as possible, we will utilize the following properties
 of $\mathcal{C}_{k}^{E}, \mathcal{S}_{k}^{E}$ and $\mathcal{D}_{k}^{E}$.

\begin{lem} \label{lemforSCD} Let  $\mathcal{C}_{k}^{E}, \mathcal{S}_{k}^{E}$ and $\mathcal{D}_{k}^{E}$ be the functions defined by  \eqref{equa2-1}.
\begin{itemize}

\item[(i)] If  $\theta_1\in(0,\pi]$, then \begin{equation*}
\label{relaofscd-1}\mathcal{S}_{k}^{\left[\theta_{1},2\pi\right]}
=-\mathcal{S}_{k}^{\left[0,\theta_{1}\right]},\
\mathcal{C}_{k}^{\left[\theta_{1},2\pi\right]}
=\mathcal{C}_{k}^{\left[0,\theta_{1}\right]}+2\mathcal{C}_{k}^{\left[\theta_{1},\pi\right]},\
\mathcal{D}_{k}^{\left[\theta_{1},2\pi\right]}
=\mathcal{D}_{k}^{\left[0,\theta_{1}\right]}+2\mathcal{D}_{k}^{\left[\theta_{1},\pi\right]}-2\pi\left(\mathcal{S}_{k}^{\left[0,\theta_{1}\right]}+\mathcal{S}_{k}^{\left[\theta_{1},\pi\right]}\right).\end{equation*}

\item[(ii)]  If $\theta_1\in(\pi, 2\pi]$, then
\begin{equation*}\label{relaofscd-2}
 \mathcal{S}_{k}^{\left[0,\theta_{1}\right]}=\mathcal{S}_{k}^{\left[0,2\pi-\theta_{1}\right]}=-\mathcal{S}_{k}^{\left[\theta_{1},2\pi\right]},\quad
\mathcal{C}_{k}^{\left[0,\theta_{1}\right]}
=\mathcal{C}_{k}^{\left[0,2\pi-\theta_{1}\right]}+2\mathcal{C}_{k}^{\left[2\pi-\theta_{1},\pi\right]},\quad
\mathcal{C}_{k}^{\left[\theta_{1},2\pi\right]}
=\mathcal{C}_{k}^{\left[0,2\pi-\theta_1\right]},
\end{equation*}
\begin{equation*}\label{relaofscd-2-1}
 \mathcal{D}_{k}^{\left[\theta_{1},2\pi\right]}=\mathcal{D}_{k}^{\left[0, 2\pi-\theta_{1}\right]}-2\pi\mathcal{S}_{k}^{\left[0, 2\pi-\theta_{1}\right]},\quad
 \mathcal{D}_{k}^{\left[0, \theta_{1}\right]}=\mathcal{D}_{k}^{\left[0, 2\pi-\theta_{1}\right]}+2\mathcal{D}_{k}^{\left[ 2\pi-\theta_{1}, \pi\right]}-2\pi\mathcal{S}_{k}^{\left[ 2\pi-\theta_{1}, \pi\right]}.
\end{equation*}
\end{itemize}
\end{lem}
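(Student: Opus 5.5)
The whole lemma follows from the reflection $\theta\mapsto 2\pi-\theta$. The weight $w(\theta):=(1+\rho-\cos\theta)^{\beta}$ is invariant under it, while $\cos(k\theta)$ is even and $\sin(k\theta)$ is odd with respect to it. For any interval $[a,b]\subseteq[0,2\pi]$ I will first record the basic identities
\begin{equation*}
\mathcal{C}_{k}^{[2\pi-b,2\pi-a]}=\mathcal{C}_{k}^{[a,b]},\qquad
\mathcal{S}_{k}^{[2\pi-b,2\pi-a]}=-\mathcal{S}_{k}^{[a,b]},\qquad
\mathcal{D}_{k}^{[2\pi-b,2\pi-a]}=\mathcal{D}_{k}^{[a,b]}-2\pi\mathcal{S}_{k}^{[a,b]}.
\end{equation*}
The first two come from substituting $\theta=2\pi-s$ directly. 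For the third, the factor $\theta$ becomes $2\pi-s$ and $\sin(k(2\pi-s))=-\sin(ks)$, so the integrand turns into $(s-2\pi)\sin(ks)w(s)$, which gives the stated formula. All integrals are well defined on $\hat I$, since $1+\rho-\cos\theta$ does not vanish there. Besides these identities I only need additivity of the integrals over adjacent intervals.

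For (i), with $\theta_1\in(0,\pi]$, I split $[\theta_1,2\pi]=[\theta_1,\pi]\cup[\pi,2\pi-\theta_1]\cup[2\pi-\theta_1,2\pi]$. The reflection maps the last two pieces onto $[\theta_1,\pi]$ and $[0,\theta_1]$.
\begin{itemize}
\item For $\mathcal{S}$ this gives $\mathcal{S}^{[\theta_1,\pi]}-\mathcal{S}^{[\theta_1,\pi]}-\mathcal{S}^{[0,\theta_1]}=-\mathcal{S}^{[0,\theta_1]}$.
\item For $\mathcal{C}$ it gives $\mathcal{C}^{[0,\theta_1]}+2\mathcal{C}^{[\theta_1,\pi]}$.
\item For $\mathcal{D}$ it gives $\mathcal{D}^{[\theta_1,\pi]}+\bigl(\mathcal{D}^{[\theta_1,\pi]}-2\pi\mathcal{S}^{[\theta_1,\pi]}\bigr)+\bigl(\mathcal{D}^{[0,\theta_1]}-2\pi\mathcal{S}^{[0,\theta_1]}\bigr)$, which is exactly the claimed expression.
\end{itemize}

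For (ii), with $\theta_1\in(\pi,2\pi]$, I set $\theta_1'=2\pi-\theta_1\in[0,\pi)$. Reflecting $[\theta_1,2\pi]$ gives $[0,\theta_1']$, which yields $\mathcal{C}^{[\theta_1,2\pi]}=\mathcal{C}^{[0,\theta_1']}$, $\mathcal{S}^{[\theta_1,2\pi]}=-\mathcal{S}^{[0,\theta_1']}$ and $\mathcal{D}^{[\theta_1,2\pi]}=\mathcal{D}^{[0,\theta_1']}-2\pi\mathcal{S}^{[0,\theta_1']}$. Next I write $[0,\theta_1]=[0,\theta_1']\cup[\theta_1',\pi]\cup[\pi,\theta_1]$, and the reflection maps $[\pi,\theta_1]$ onto $[\theta_1',\pi]$.
\begin{itemize}
\item For $\mathcal{S}$ the last two pieces cancel, so $\mathcal{S}^{[0,\theta_1]}=\mathcal{S}^{[0,\theta_1']}$, and this equals $-\mathcal{S}^{[\theta_1,2\pi]}$.
\item For $\mathcal{C}$ we get $\mathcal{C}^{[0,\theta_1']}+2\mathcal{C}^{[\theta_1',\pi]}$.
\item For $\mathcal{D}$ we get $\mathcal{D}^{[0,\theta_1']}+2\mathcal{D}^{[\theta_1',\pi]}-2\pi\mathcal{S}^{[\theta_1',\pi]}$.
\end{itemize}

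There is no real obstacle here. The only point needing care is the sign bookkeeping in the $\mathcal{D}$ identity, where the extra $-2\pi\mathcal{S}$ term appears. The degenerate endpoint cases ($\theta_1=\pi$, or $\theta_1=2\pi$ so that $\theta_1'=0$) are covered automatically, since the corresponding integrals over empty or degenerate intervals are zero.
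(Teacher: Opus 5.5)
Your proposal is correct and follows essentially the same route as the paper: the reflection $\theta\mapsto 2\pi-\theta$ together with splitting the integration intervals at $\pi$ and $2\pi-\theta_1$ and using additivity. The only difference is organizational. You state the three reflection identities once for a general $[a,b]$, which derives the $\mathcal{S}$ and $\mathcal{C}$ relations explicitly; the paper works out only the $\mathcal{D}$ relations case by case and leaves the others as similar.
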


\begin{proof} (i)
We only provide the proof of the third equality since the first and the second equalities can be dealt with in a similar way. By the transformation $\theta \to 2\pi-\theta$, we get
\begin{equation*} \label{relaprooeq1}
\begin{split}
\mathcal{D}_{k}^{\left[\theta_{1},2\pi-\theta_1\right]}=&\mathcal{D}_{k}^{\left[\theta_{1},\pi\right]}+\mathcal{D}_{k}^{\left[\pi,2\pi-\theta_1\right]}\\
=&\mathcal{D}_{k}^{\left[\theta_{1},\pi\right]}+\int_{\theta_1}^{\pi}(\theta-2\pi)\sin(k\theta)\left(1+\rho-\cos\theta\right)^{\beta}d\theta\\
=&2\mathcal{D}_{k}^{\left[\theta_{1},\pi\right]}-2\pi\mathcal{S}_{k}^{\left[\theta_{1},\pi\right]},
\end{split}
\end{equation*}
and
\begin{equation*}
\mathcal{D}_{k}^{\left[2\pi-\theta_{1},2\pi\right]}=\int_0^{\theta_1}(\theta-2\pi)\sin(k\theta)\left(1+\rho-\cos\theta\right)^{\beta}d\theta=\mathcal{D}_{k}^{\left[0,\theta_{1}\right]}-2\pi\mathcal{S}_{k}^{\left[0,\theta_{1}\right]}.
\end{equation*}
Thus, $$\mathcal{D}_{k}^{\left[\theta_{1},2\pi\right]}=\mathcal{D}_{k}^{\left[\theta_{1},2\pi-\theta_1\right]}+\mathcal{D}_{k}^{\left[2\pi-\theta_{1},2\pi\right]}
=\mathcal{D}_{k}^{\left[0,\theta_{1}\right]}+2\mathcal{D}_{k}^{\left[\theta_{1},\pi\right]}-2\pi\mathcal{S}_{k}^{\left[0,\theta_{1}\right]}-2\pi\mathcal{S}_{k}^{\left[\theta_{1},\pi\right]}.$$

 (ii)
We only provide the proof of the   fourth and fifth  equalities. Using  the transformation $\theta \to 2\pi-\theta$, we get
\begin{equation*} \label{relaprooeq2}
\mathcal{D}_{k}^{\left[0,\theta_{1}\right]}=\int_{2\pi-\theta_1}^{2\pi}(\theta-2\pi)\sin(k\theta)\left(1+\rho-\cos\theta\right)^{\beta}d\theta=\mathcal{D}_{k}^{\left[0,2\pi\right]}-\mathcal{D}_{k}^{\left[0,2\pi-\theta_{1}\right]}+2\pi\mathcal{S}_{k}^{\left[0,2\pi-\theta_{1}\right]},
\end{equation*}
thus \begin{equation*}
\mathcal{D}_{k}^{\left[\theta_{1}, 2\pi\right]}=\mathcal{D}_{k}^{\left[0,2\pi\right]}-\mathcal{D}_{k}^{\left[0,\theta_{1}\right]}=\mathcal{D}_{k}^{\left[0,2\pi-\theta_{1}\right]}-2\pi\mathcal{S}_{k}^{\left[0,2\pi-\theta_{1}\right]}.
\end{equation*}

Next we prove the fifth equality. Again, by using the transformation $\theta \to 2\pi-\theta$,
\begin{equation*}
\mathcal{D}_{k}^{\left[\pi,\theta_{1}\right]}=\int_{2\pi-\theta_1}^{\pi}(\theta-2\pi)\sin(k\theta)\left(1+\rho-\cos\theta\right)^{\beta}d\theta=\mathcal{D}_{k}^{\left[2\pi-\theta_{1},\pi\right]}-2\pi\mathcal{S}_{k}^{\left[2\pi-\theta_{1},\pi\right]}.
\end{equation*}
Hence,
 \begin{equation*}
\mathcal{D}_{k}^{\left[0,\theta_{1}\right]}=\mathcal{D}_{k}^{\left[0,2\pi-\theta_1\right]}+\mathcal{D}_{k}^{\left[2\pi-\theta_1,\pi\right]}+\mathcal{D}_{k}^{\left[\pi,\theta_{1}\right]}=\mathcal{D}_{k}^{\left[0,2\pi-\theta_1\right]}+2\mathcal{D}_{k}^{\left[2\pi-\theta_{1},\pi\right]}-2\pi\mathcal{S}_{k}^{\left[2\pi-\theta_{1},\pi\right]}.
\end{equation*}

The proof is complete. \end{proof}

\subsection{Expression of $M_{1}(\rho)$.}

From \eqref{equa6} and  Lemma \ref{lemforSCD}, we can obtain the  expression of  $M_{1}(\rho)$ by direct calculation.

\begin{prop} \label{prop1}Denoted by  $\tilde{c}_{ij}^{\pm}=(1-p)^{\alpha}c_{ij}^{\pm}$ and  $\tilde{d}_{ij}^{\pm}=(1-p)^{\alpha}d_{ij}^{\pm}$.
The following statements are true for \eqref{equa3}.
\begin{itemize}

\item[(i)] If $\theta_1\in(0,\pi)$,  then
\begin{equation*}\label{equa7-1}
M_{1}(\rho)=p_{10}+\sum_{k=0}^{m}\left(\tilde{d}_{1k}^{+}+\tilde{d}_{1k}^{-}\right)\mathcal{C}_{k}^{E_1}(\rho,\alpha)
+\sum_{k=1}^{m}\left(\tilde{c}_{1k}^{+}-\tilde{c}_{1k}^{-}\right)\mathcal{S}_{k}^{E_1}(\rho,\alpha)
+2\sum_{k=0}^{m}\tilde{d}_{1k}^-\mathcal{C}_{k}^{E_2}(\rho,\alpha),
\end{equation*}
where $E_1=[0,\theta_1], E_2=[\theta_1, \pi]$.

\item[(ii)] If   $\theta_1\in(\pi, 2\pi)$,  then
\begin{equation*}\label{equa8}
M_{1}(\rho)=p_{10}+\sum_{k=0}^{m}\left(\tilde{d}_{1k}^{+}+\tilde{d}_{1k}^{-}\right)\mathcal{C}_{k}^{E_1}(\rho,\alpha)
+\sum_{k=1}^{m}\left(\tilde{c}_{1k}^{+}-\tilde{c}_{1k}^{-}\right)\mathcal{S}_{k}^{E_1}(\rho,\alpha)
+2\sum_{k=0}^{m}\tilde{d}_{1k}^+\mathcal{C}_{k}^{E_2}(\rho,\alpha),
\end{equation*}
where $E_1=[0,2\pi-\theta_1], E_2=[2\pi-\theta_1, \pi]$.

\item[(iii)]  If  $\theta_1\in \{\pi, 2\pi\}$, then
 \begin{align*}\label{equa1-40}
M_{1}(\rho)=
\left\{
\begin{aligned}
  p_{10}+\sum_{k=0}^{m}\left(\tilde{d}_{1k}^{+}+\tilde{d}_{1k}^{-}\right)\mathcal{C}_{k}^{E}(\rho,\alpha)
+\sum_{k=1}^{m}\left(\tilde{c}_{1k}^{+}-\tilde{c}_{1k}^{-}\right)\mathcal{S}_{k}^{E}(\rho,\alpha),&& \mbox{for} \ \theta_1=\pi,  \\
 p_{10}+2\sum_{k=0}^{m}\tilde{d}_{1k}^{+}\mathcal{C}_{k}^{E}(\rho,\alpha),\quad \quad\quad\quad\quad\quad\quad\quad\quad\quad
  \quad\quad&&  \mbox{for} \ \theta_1=2\pi,
\end{aligned}
\right.
\end{align*}
where $E=[0, \pi]$.
\end{itemize}

\end{prop}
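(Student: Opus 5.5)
Starting from \eqref{equa6}, we have $y_0(\theta,\rho)=1+\rho-\cos\theta$ and $\tilde{Q}_1=\tilde{Q}_1^+$ on $[0,\theta_1)$, $\tilde{Q}_1=\tilde{Q}_1^-$ on $[\theta_1,2\pi]$. This gives
\begin{equation*}
M_1(\rho)=p_{10}+\sum_{k=0}^{m}\Big(\tilde{c}_{1k}^{+}\mathcal{S}_{k}^{[0,\theta_1]}+\tilde{d}_{1k}^{+}\mathcal{C}_{k}^{[0,\theta_1]}+\tilde{c}_{1k}^{-}\mathcal{S}_{k}^{[\theta_1,2\pi]}+\tilde{d}_{1k}^{-}\mathcal{C}_{k}^{[\theta_1,2\pi]}\Big),
\end{equation*}
with all functions evaluated at $(\rho,\alpha)$. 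The terms with $k=0$ in the sine sums vanish, so those sums start at $k=1$. The rest of the proof rewrites the integrals over $[\theta_1,2\pi]$, or over $[0,\theta_1]$, using Lemma \ref{lemforSCD}. That lemma reduces everything to the fundamental pieces $E_1,E_2\subseteq[0,\pi]$.

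For case (i), $\theta_1\in(0,\pi)$, we apply Lemma \ref{lemforSCD}(i):
\begin{equation*}
\mathcal{S}_k^{[\theta_1,2\pi]}=-\mathcal{S}_k^{E_1},\qquad \mathcal{C}_k^{[\theta_1,2\pi]}=\mathcal{C}_k^{E_1}+2\mathcal{C}_k^{E_2}.
\end{equation*}
Substituting and collecting coefficients gives $(\tilde d^+_{1k}+\tilde d^-_{1k})\mathcal{C}_k^{E_1}$, $(\tilde c^+_{1k}-\tilde c^-_{1k})\mathcal{S}_k^{E_1}$ and $2\tilde d^-_{1k}\mathcal{C}_k^{E_2}$, which is the stated formula.

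For case (ii), $\theta_1\in(\pi,2\pi)$, we use Lemma \ref{lemforSCD}(ii) with $E_1=[0,2\pi-\theta_1]$ and $E_2=[2\pi-\theta_1,\pi]$. The sine terms give $\mathcal{S}_k^{[0,\theta_1]}=\mathcal{S}_k^{E_1}=-\mathcal{S}_k^{[\theta_1,2\pi]}$. The cosine terms give $\mathcal{C}_k^{[0,\theta_1]}=\mathcal{C}_k^{E_1}+2\mathcal{C}_k^{E_2}$ and $\mathcal{C}_k^{[\theta_1,2\pi]}=\mathcal{C}_k^{E_1}$. Collecting terms yields the formula with the factor $2\tilde d^+_{1k}$ in front of $\mathcal{C}_k^{E_2}$.

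For case (iii), we treat the two values separately.
\begin{itemize}
\item[(a)] For $\theta_1=\pi$, apply (i) formally with $E_2=[\pi,\pi]$. This term is empty, so its integral vanishes. Alternatively, use the substitution $\theta\to2\pi-\theta$ directly: under it, $\cos(k\theta)$ and $1+\rho-\cos\theta$ are invariant and $\sin(k\theta)$ changes sign.
\item[(b)] For $\theta_1=2\pi$, the imposed restrictions make $P_1,Q_1$ smooth, so only the $+$ coefficients appear. The integral over $[0,2\pi]$ of the sine terms vanishes by oddness under $\theta\to2\pi-\theta$. The integral of the cosine terms doubles $\mathcal{C}_k^{[0,\pi]}$.
\end{itemize}

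The only real care needed is bookkeeping: matching the orientation of $E_1,E_2$ in case (ii), and checking that $\rho\in\hat I$ keeps $1+\rho-\cos\theta$ of constant sign, so that the power $y_0^\alpha$ is well defined and the symmetry substitutions are legitimate. No step is genuinely hard.
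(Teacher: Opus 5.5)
Your proposal is correct and follows the paper's own route. The paper likewise substitutes the piecewise form of $\tilde{Q}_1$ into \eqref{equa6} and rewrites the integrals over $[0,\theta_1]$ and $[\theta_1,2\pi]$ by ``direct calculation'' using Lemma \ref{lemforSCD}, and your coefficient bookkeeping in cases (i)--(iii), including treating $\theta_1=\pi$ as the degenerate case $E_2=[\pi,\pi]$ and $\theta_1=2\pi$ via the symmetry $\theta\to2\pi-\theta$, checks out.
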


\subsection{Expression of $M_{2}(\rho)$.}
In this subsection we shall     study the  second order   Melnikov function $M_2(\rho)$, under the assumption    that $M_1(\rho)\equiv 0$. To this end, we need to simplify the
expression of $\tilde{Q}_1$.

 \begin{prop} \label{prop1+1} Suppose that $M_1(\rho)\equiv 0$. Then $p_{10}=0$ and the following statements are true.
 \begin{itemize}
 \item[(i)] If $\theta_1\in (0, \pi)\cup(\pi, 2\pi]$,  then $\tilde{d}_{1k}^{+}=\tilde{d}_{1k}^{-}=0$ and $ \tilde{c}_{1k}^{+}=\tilde{c}_{1k}^{-}$.
  If   $\theta_1=\pi$,  then   $\tilde{d}_{1k}^{+}=-\tilde{d}_{1k}^{-}$ and $ \tilde{c}_{1k}^{+}=\tilde{c}_{1k}^{-}$.

\item[(ii)] $ \tilde{Q}_1(\theta+\pi)$ is an odd function on $[-\pi, \pi]$.
\end{itemize}
\end{prop}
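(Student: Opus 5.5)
The plan is to start from the explicit formulas for $M_1(\rho)$ in Proposition \ref{prop1}. The key input is that the functions $1$, $\mathcal{C}_{k}^{E_1}(\rho,\alpha)$, $\mathcal{S}_{k}^{E_1}(\rho,\alpha)$, $\mathcal{C}_{k}^{E_2}(\rho,\alpha)$ appearing there are linearly independent on $\hat I$. Then $M_1\equiv 0$ forces every coefficient to vanish.

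\textbf{Linear independence.} I would obtain it from the asymptotic behaviour of these functions as $\rho\to\infty$ (or $\rho\to-\infty$, according to $\hat I$). Write
\[
(1+\rho-\cos\theta)^{\alpha}=(1+\rho)^{\alpha}\bigl(1-u\cos\theta\bigr)^{\alpha},\qquad u=\frac{1}{1+\rho},
\]
and expand in powers of $u$:
\[
(1-u\cos\theta)^{\alpha}=\sum_{j\ge0}\binom{\alpha}{j}(-u)^j\cos^j\theta .
\]
Since $\alpha\notin\mathbf Z_{\ge0}$, every coefficient $\binom{\alpha}{j}$ is nonzero. Substituting, the constant $p_{10}$ has a different growth order $(1+\rho)^0$ from the terms $(1+\rho)^{\alpha-j}$ (recall $\alpha\notin\mathbf Z$). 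So a vanishing linear combination splits into $p_{10}=0$ and
\[
\int_0^{2\pi}\tilde Q_1(\theta)\cos^j\theta\,d\theta=0\qquad\text{for all } j\ge 0 .
\]
If $\alpha$ happens to be a negative integer, I would separate $p_{10}$ instead by the analytic continuation in $\rho$ and the branch behaviour near the boundary of $\hat I$. I expect this degenerate-order bookkeeping to be the only delicate point.

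\textbf{Consequences of the moment conditions.} Because the $\cos^j\theta$ span all polynomials in $\cos\theta$, the moment conditions say that the even part of $\tilde Q_1$ about $\theta=\pi$, i.e.\ $\frac12\bigl(\tilde Q_1(\theta)+\tilde Q_1(2\pi-\theta)\bigr)$, is orthogonal to all continuous functions of $\cos\theta$ on $[0,\pi]$. That even part is a function of $\cos\theta$ via $\theta\in[0,\pi]$, so this gives
\[
\tilde Q_1(\theta)+\tilde Q_1(2\pi-\theta)=0 \qquad\text{a.e.}
\]
This is exactly statement (ii): setting $\theta=\pi+t$ gives $\tilde Q_1(\pi+t)=-\tilde Q_1(\pi-t)$.

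Statement (i) is read off from this identity. For $t$ with $\pi+t$ and $\pi-t$ in different zones, the identity becomes $\tilde Q_1^{\pm}(\theta)=-\tilde Q_1^{\mp}(2\pi-\theta)$ on an open interval. For $t$ in the same zone, it becomes $\tilde Q_1^{\pm}(\theta)=-\tilde Q_1^{\pm}(2\pi-\theta)$ on an open interval. By analyticity, each of these holds identically.

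\textbf{Case analysis on $\theta_1$.}
\begin{itemize}
\item If $\theta_1\ne\pi$, one zone contains a symmetric interval about $\pi$ (or about $0$). There the single polynomial $\tilde Q_1^{\pm}$ is odd about $\pi$, which kills its cosine coefficients, so $\tilde d_{1k}^{\pm}=0$ for that sign. The cross-zone identity $\tilde Q_1^{+}(\theta)=-\tilde Q_1^{-}(2\pi-\theta)$ then gives $\tilde c_{1k}^{+}=\tilde c_{1k}^{-}$ and $\tilde d_{1k}^{+}=-\tilde d_{1k}^{-}$, hence both $\tilde d$'s vanish.
\item If $\theta_1=\pi$, only the cross-zone identity is available. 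It yields $\tilde c_{1k}^{+}=\tilde c_{1k}^{-}$ and $\tilde d_{1k}^{+}=-\tilde d_{1k}^{-}$.
\item If $\theta_1=2\pi$, the equation is smooth and the self-symmetry argument gives $\tilde d_{1k}=0$ directly.
\end{itemize}

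These conclusions agree with setting the coefficients in Proposition \ref{prop1} to zero: $\tilde d^+ +\tilde d^- =0$, $\tilde c^+-\tilde c^-=0$, and $\tilde d^{\mp}=0$.
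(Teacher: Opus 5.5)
Your argument is correct, but it follows a different route from the paper. The paper invokes Proposition~\ref{prop3-5}(i). The ECT property of $(\mathcal C_0^{E_1},\dots,\mathcal C_m^{E_1},\mathcal S_m^{E_1},\dots,\mathcal S_1^{E_1},\mathcal C_0^{E_2},\dots,\mathcal C_m^{E_2})$, obtained from the Chebyshev criterion quoted in the proof of that proposition, gives linear independence of this family. The constant $1$ is added implicitly via Lemma~\ref{lem3-3}. Hence every coefficient in the decomposition of Proposition~\ref{prop1} must vanish, which yields (i), and (ii) is then read off from (i).

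You never use the decomposition of Proposition~\ref{prop1} or any Chebyshev property. You expand $\int_0^{2\pi}\tilde Q_1(\theta)(1+\rho-\cos\theta)^{\alpha}\,d\theta$ at $\rho=\infty$, which gives $p_{10}=0$ and the vanishing of all moments $\int_0^{2\pi}\tilde Q_1\cos^j\theta\,d\theta$. Weierstrass approximation then gives the reflection identity $\tilde Q_1(\theta)=-\tilde Q_1(2\pi-\theta)$, i.e.\ (ii), first. You then get (i) by analytic continuation of the zone-wise trigonometric identities.

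Your route is elementary and self-contained. It also works uniformly for all $m\ge 1$, whereas Proposition~\ref{prop3-5}(i) is stated only for $n>1$. As a byproduct it gives the linear independence you announced as the ``key input''. That opening framing is therefore superfluous, since you actually bypass that family. The paper's route is more economical within the paper, because the ECT property is needed anyway for counting zeros in Theorem~\ref{thm1-1}, and it gives more than independence.

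One correction: the step you flag as delicate is not delicate. The standing hypothesis is $\alpha\notin\mathbf Z_{\ge 0}$, not $\alpha\notin\mathbf Z$. That is exactly what your separation needs: the exponents $\alpha-j$ ($j\ge 0$) never equal $0$, and $\binom{\alpha}{j}\neq 0$. When $\alpha$ is a negative integer, $M_1$ is an ordinary power series in $u=1/(1+\rho)$. Its $u^0$ coefficient is $p_{10}$, and the remaining terms start at $u^{-\alpha}$ with $-\alpha\ge 1$. The same coefficient comparison therefore applies, and no analytic-continuation or branch argument is needed.
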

\begin{proof} (i) First assume  that  $\theta_1\in (0, \pi)\cup(\pi, 2\pi)$.
For any  $ \vartheta\in (0, \pi)$, let  $E_1=[0, \vartheta], E_2=[\vartheta, \pi]$.  From  Proposition \ref{prop3-5} (see Section 3), the functions
\begin{equation*}\mathcal{C}_{0}^{E_1}(\rho,\alpha), \mathcal{C}_{1}^{E_1}(\rho,\alpha), \cdots, \mathcal{C}_{m}^{E_1}(\rho,\alpha), \mathcal{S}_{m}^{E_1}(\rho,\alpha), \cdots, \mathcal{S}_{1}^{E_1}(\rho,\alpha),
 \mathcal{C}_{0}^{E_2}(\rho,\alpha), \mathcal{C}_{1}^{E_2}(\rho,\alpha), \cdots, \mathcal{C}_{m}^{E_2}(\rho,\alpha)\end{equation*} are independent.  Therefore, we get from Proposition \ref{prop1}   that
 $p_{10}=0$ and $\tilde{d}_{1k}^{+}=\tilde{d}_{1k}^{-}=0, \tilde{c}_{1k}^{+}=\tilde{c}_{1k}^{-}$.  The case $\theta_1\in \{\pi, 2\pi\}$ can be treated similarly.

 (ii) The conclusion follows from the \eqref{equa1-2} and result of (i) directly.
\end{proof}

Under the assumption  $M_1(\rho)\equiv 0$, we know that
 $L_1(\theta):= \int_{0}^{\theta+\pi} \tilde{Q}_1(s)y_0(s,\rho)^{\alpha}d s$ is even function on $[-\pi, \pi]$, due to the fact that
$$L_1(\theta)-L_1(-\theta)= \int_{-\theta+\pi}^{\theta+\pi} \tilde{Q}_1(s)y_0(s,\rho)^{\alpha}d s= \int_{-\theta}^{\theta} \tilde{Q}_1(s+\pi)y_0(s+\pi,\rho)^{\alpha}d s=0,$$
where in the last equality we use the fact that $ \tilde{Q}_1(\theta+\pi)$ (see Proposition \ref{prop1+1}) is   odd and $y_0(s+\pi,\rho)$ is even.
Thus \begin{equation}\label{equa4-2a1}
\int_{0}^{2\pi} \bigg(\tilde{Q}_1(\theta)y_0(\theta,\rho)^{\alpha-1} \int_{0}^{\theta} \tilde{Q}_1(s)y_0(s,\rho)^{\alpha}d s\bigg)d \theta=\int_{-\pi}^{\pi} \bigg(\tilde{Q}_1(\theta+\pi)y_0(\theta+\pi,\rho)^{\alpha-1} L_1(\theta)\bigg)d \theta=0.
\end{equation}

By \eqref{equa4},
\begin{equation} \label{equa4-2}\hat{S}_1(\theta,\rho)= \int_{0}^{\theta} \big(P_1(\theta)+ \tilde{Q}_1(\theta)y_0(\theta,\rho)^{\alpha}\big)d \theta. \end{equation}

It follows from\eqref{equa7}, \eqref{equa4-2a1} and \eqref{equa4-2}   that
\begin{align} \label{equa7-2}
\begin{split}
M_{2}(\rho)&=p_{20}+\int_{0}^{2\pi} \bigg( \tilde{Q}_2(\theta)y_0(\theta,\rho)^{\alpha}+\alpha\tilde{Q}_1(\theta)y_0(\theta,\rho)^{\alpha-1} \int_{0}^{\theta}P_1(s)ds\bigg)d \theta\\
&=p_{20}+ \int_{0}^{2\pi}\tilde{Q}_2(\theta)y_0(\theta,\rho)^{\alpha}d \theta+\alpha I(\rho).
\end{split}
\end{align}
where \begin{equation}\label{equadefI} I(\rho)=\int_{0}^{2\pi}R(\theta)y_0(\theta,\rho)^{\alpha-1} d\theta,\end{equation}  with
\begin{equation}\label{equadefR}R(\theta)=\tilde{Q}_1(\theta)\int_{0}^{\theta}P_1(s)ds, \theta \in [0,2\pi].\end{equation}

In order to express  $M_{2}(\rho)$  as a linear combinations of the functions defined by \eqref{equa2-1},  we shall using the integration by part to  write $ I(\rho)$  as the form of $ \int_{0}^{2\pi} h(\theta)y_0(\theta,\rho)^{\alpha}d \theta$.

\subsubsection{Expression of $M_{2}(\rho)$ for $\theta_1\neq \pi$}

\hspace{0.3cm}

 Since $\theta_1\neq \pi$, we get from Proposition \ref{prop1+1} that,
 $$\tilde{Q}_1(\theta)=\sum_{k=1}^{m}\tilde{c}_{1k}\sin (k\theta), \ \ \mbox{with}\ \ \tilde{c}_{1k}:=\tilde{c}_{1k}^+=\tilde{c}_{1k}^-.$$

 Using  the  formula

 \begin{equation}\label{secondcheply}
\sin (k\theta)= \sin\theta U_{k-1}(\cos\theta), \end{equation}
with $U_{j}$ being  the $j$-th degree Chebyshev polynomials of the   second kind, i.e.,
\begin{equation*} \label{equaUk}
U_{j}(\cos\theta)=\sum_{i=0}^{[(j-1)/2]}(-1)^i\binom{2i+1}{j}\cos^{j-2i-1}\theta(1-\cos^{2}\theta)^i=\frac{1+(-1)^j}{2}+2\sum_{\substack{j\geq i\geq 1\\ step=-2}}\cos(i\theta),\   j=1,2,  \cdots,\end{equation*}
where``$j\geq i\geq 1,$\ step=-2" means   that $i=j, j-2,j-4, \cdots, (3+(-1)^j)/2$,
 it follows   that ${R(\theta)}/{\sin \theta}$  can be analytically extended to
$[0,2\pi]$ with
\begin{align*}
\begin{split}\frac{R(\theta)}{\sin \theta}\bigg|_{\theta = 0, 2\pi}&=\lim_{\theta \to 0, 2\pi}\frac{R(\theta)}{\sin \theta}=\lim_{\theta \to 0, 2\pi}\frac{R'(\theta)}{\cos \theta}=\lim_{\theta \to 0, 2\pi}R'(\theta)\\
&=\lim_{\theta \to 0, 2\pi}\bigg(\tilde{Q}_1(\theta)P_1(\theta)+\tilde{Q}'_1(\theta)\int_{0}^{\theta}P_1(s)ds\bigg)\\
&=\lim_{\theta \to 0, 2\pi}\tilde{Q}_1(\theta)P_1(\theta)=0,
\end{split}
\end{align*}
 here we use the fact that $p_{10}=0$.

By means of integration by parts, it follows from \eqref{equadefI} that

\begin{align} \label{equa9}
\begin{split}
\alpha I(\rho)&=\alpha\int_{0}^{2\pi}R(\theta)y_0(\theta,\rho)^{\alpha-1} d\theta=\int_{0}^{2\pi}\frac{R(\theta)}{\sin \theta}d y_0(\theta,\rho)^{\alpha}\\
&=\frac{R(\theta)}{\sin \theta} y_0(\theta,\rho)^{\alpha}\bigg|_0^{2\pi}-\int_{0}^{2\pi}\bigg(\frac{R(\theta)}{\sin \theta}\bigg)' y_0(\theta,\rho)^{\alpha}d \theta.
\end{split}
\end{align}

From  \eqref{equa7-2} and \eqref{equa9} we obtain that

\begin{equation*} \label{equa10}
M_{2}(\rho)=p_{20}+ \int_{0}^{2\pi}\bigg(\tilde{Q}_2(\theta)- \bigg(\frac{R(\theta)}{\sin \theta}\bigg)' \bigg)y_0(\theta,\rho)^{\alpha}d \theta. \end{equation*}

Let  \begin{align}\label{m2}
&m_2(\theta):=
\left\{
\begin{aligned}
m_2^+(\theta)=\tilde{Q}_2(\theta)- \bigg(\frac{R(\theta)}{\sin \theta}\bigg)'\bigg|_{\ 0 \le  \theta <  \theta_{1}},   \\
m_2^-(\theta)=\tilde{Q}_2(\theta)- \bigg(\frac{R(\theta)}{\sin \theta}\bigg)'\bigg|_{\  \theta_{1} \leq \theta \le 2\pi}.
\end{aligned}
\right.
\end{align}
Then
\begin{equation} \label{equa10-1}
M_{2}(\rho)=p_{20}+ \int_{0}^{2\pi}m_2(\theta)y_0(\theta,\rho)^{\alpha}d \theta. \end{equation}

Next we derive the expression of $\int_{0}^{\theta}P_1(s)ds$ and hence of ${R(\theta)}/{\sin \theta}$ (see \eqref{equadefR}).

 When $\theta \in (0, \theta_1)$,   we get
\begin{equation*}\label{equa13}
\int_{0}^{\theta}P_1(s)ds
={F}_1^+(\theta_1)+b_{10}^+\theta+\sum_{k=1}^{m}\frac{1}{k}\bigg(b_{1k}^+\sin (k\theta)-a_{1k}^+\cos (k\theta)\bigg).
\end{equation*}
where ${F}_1^+(\theta_1)=\sum_{k=1}^{m}\left({a_{1k}^+}/{k}\right).$

When  $\theta \in [\theta_1, 2\pi)$,   we get
\begin{align} \label{equa14}
\begin{split}
\int_{0}^{\theta}P_1(s)ds&=\int_{0}^{\theta_1}P_1(s)ds+\int_{\theta_1}^{\theta}P_1(s)ds\\
&=\int_{0}^{\theta_1}P_1(s)ds+\int_{\theta_1}^{\theta}\bigg(b_{10}^-+\sum_{k=1}^{m}a_{1k}^-\sin (ks)+b_{1k}^-\cos (ks)\bigg)ds\\
&=\int_{0}^{\theta_1}P_1(s)ds-b_{10}^-\theta_1+\sum_{k=1}^{m}\frac{1}{k}\big(a_{1k}^-\cos (k\theta_1)-b_{1k}^-\sin (k\theta_1)\big)\\
&\quad + b_{10}^-\theta+\sum_{k=1}^{m}\frac{1}{k}\big(b_{1k}^-\sin (k\theta)-a_{1k}^-\cos (k\theta)\big)  \\
&={F}_1^-(\theta_1) + b_{10}^-\theta+\sum_{k=1}^{m}\frac{1}{k}\big(b_{1k}^-\sin (k\theta)-a_{1k}^-\cos (k\theta)\big),
\end{split}\nonumber
\end{align}
where $$ {F}_1^-(\theta_1)=\int_{0}^{\theta_1}P_1(s)ds-b_{10}^-\theta_1+\sum_{k=1}^{m}\frac{1}{k}\big(a_{1k}^-\cos (k\theta_1)-b_{1k}^-\sin (k\theta_1)\big).$$

By the formula
\begin{equation*} \label{equa15}
\frac{d}{d\theta}U_{k}(\cos\theta)=-2\sum_{\substack{k\geq i\geq 1\\ step=-2}}i\sin(i\theta),\quad  k=1,2,  \cdots,\end{equation*}
where``$k\geq i\geq 1,$\ step=-2" means   that $i=k, k-2,k-4, \cdots, 1$ or $2$,
we obtain   that
\begin{equation} \label{equa17}
\begin{split}
\frac{d}{d\theta}\bigg(\frac{R(\theta)}{\sin \theta}\bigg)&=\sum_{k=2}^{m}\tilde{c}_{1k} (U_{k-1}(\cos\theta))'\int_{0}^{\theta}P_1(s)ds+\sum_{k=1}^{m}\tilde{c}_{1k} U_{k-1}(\cos\theta)P_1(\theta)\\
&=-2\sum_{k=2}^{m}\tilde{c}_{1k}\sum_{\substack{k-1\geq i\geq 1\\ step=-2}}i\sin(i\theta)\int_{0}^{\theta}P_1(s)ds+\left(\hat{c}_{11}+2\sum_{k=2}^{m}\tilde{c}_{1k}\sum_{\substack{k-1\geq i\geq 1\\ step=-2}}\cos(i\theta)\right)P_1(\theta)\\
&=-2\sum_{k=2}^{m}\tilde{c}_{1k}\sum_{\substack{k-1\geq i\geq 1\\ step=-2}}i\sin(i\theta)\left({P}_1^{\pm}(\theta_1)+b_{10}^{\pm}\theta+\sum_{\ell=1}^{m}\bigg(\frac{b_{1\ell}^{\pm}}{\ell}\sin (\ell\theta)-\frac{a_{1\ell}^{\pm}}{\ell}\cos (\ell\theta)\bigg)\right)\\
&\quad +\left(\hat{c}_{11}+2\sum_{k=2}^{m}\tilde{c}_{1k}\sum_{\substack{k-1\geq i\geq 1\\ step=-2}}\cos(i\theta)\right)\sum_{\ell=0}^{m}\big(a_{1\ell}^{{\pm}}\sin (\ell\theta) + b_{1\ell}^{{\pm}}\cos (\ell\theta)\big),
 \end{split}\end{equation}
where
\begin{equation*} \label{equa18}
\begin{split}
\hat{c}_{11}=\tilde{c}_{11}+\sum_{k=2}^{m}\frac{1-(-1)^k}{2}\tilde{c}_{1k}
 \end{split}\end{equation*}
and the superscript $^\pm$ will becomes  $^+$ (resp. $^-$)  provided $\theta\in (0,\theta_1)$  (resp. $\theta\in [\theta_1, 2\pi)$).

Furthermore, \eqref{equa17} can be written as
\begin{align} \label{equa18}
\frac{d}{d\theta}\bigg(\frac{R(\theta)}{\sin \theta}\bigg)&=-2b_{10}^{\pm}\sum_{k=2}^{m}\tilde{c}_{1k}\sum_{\substack{k-1\geq i\geq 1\\ step=-2}}i\theta\sin(i\theta)-2{P}_1^{\pm}(\theta_1)\sum_{k=2}^{m}\tilde{c}_{1k}\sum_{\substack{k-1\geq i\geq 1\\ step=-2}}i\sin(i\theta)\nonumber\\
&\quad-2\sum_{k=2}^{m}\tilde{c}_{1k}\left(\sum_{\ell=1}^{m}\bigg(\frac{b_{1\ell}^{\pm}}{\ell}\sin (\ell\theta)-\frac{a_{1\ell}^{\pm}}{\ell}\cos (\ell\theta)\bigg)\sum_{\substack{k-1\geq i\geq 1\\ step=-2}}i\sin(i\theta)\right)\nonumber\\
 &\quad +\hat{c}_{11}\sum_{\ell=0}^{m}\big(a_{1\ell}^{{\pm}}\sin (\ell\theta) + b_{1\ell}^{{\pm}}\cos (\ell\theta)\big)\nonumber\\
 &\quad +2 b_{10}^{\pm}\sum_{k=2}^{m}\tilde{c}_{1k}\sum_{\substack{k-1\geq i\geq 1\\ step=-2}}\cos(i\theta)+2\sum_{k=2}^{m}\tilde{c}_{1k}\left(\sum_{\ell=1}^{m}\big(a_{1\ell}^{{\pm}}\sin (\ell\theta) + b_{1\ell}^{{\pm}}\cos (\ell\theta)\big)\sum_{\substack{k-1\geq i\geq 1\nonumber\\ step=-2}}\cos(i\theta)\right)\nonumber\\
&=-2b_{10}^{\pm}\sum_{k=2}^{m}\tilde{c}_{1k}\sum_{\substack{k-1\geq i\geq 1\\ step=-2}}i\theta\sin(i\theta)+\hat{c}_{11}\sum_{\ell=0}^{m}\big(a_{1\ell}^{{\pm}}\sin (\ell\theta) + b_{1\ell}^{{\pm}}\cos (\ell\theta)\big)\nonumber\\
&\quad +2 b_{10}^{\pm}\sum_{k=2}^{m}\tilde{c}_{1k}\sum_{\substack{k-1\geq i\geq 1\\ step=-2}}\cos(i\theta)-2{P}_1^{\pm}(\theta_1)\sum_{k=2}^{m}\tilde{c}_{1k}\sum_{\substack{k-1\geq i\geq 1\\ step=-2}}i\sin(i\theta)\nonumber\\
&\quad+\sum_{k=2}^{m}\sum_{\ell=1}^{m}\frac{b_{1\ell}^{\pm}\tilde{c}_{1k}}{\ell}\sum_{\substack{k-1\geq i\geq 1\\ step=-2}}i\left(\cos((\ell+i)\theta)-\cos((\ell-i)\theta)\right)\\
 &\quad +\sum_{k=2}^{m}\sum_{\ell=1}^{m}\frac{a_{1\ell}^{\pm}\tilde{c}_{1k}}{\ell}\sum_{\substack{k-1\geq i\geq 1\\ step=-2}}i\left(\sin((\ell+i)\theta)-\sin((\ell-i)\theta)\right)\nonumber\\
 &\quad+\sum_{k=2}^{m}\sum_{\ell=1}^{m}{a_{1\ell}^{\pm}\tilde{c}_{1k}}\sum_{\substack{k-1\geq i\geq 1\\ step=-2}}i\left(\sin((\ell+i)\theta)+\sin((\ell-i)\theta)\right)\nonumber \\
 &\quad +\sum_{k=2}^{m}\sum_{\ell=1}^{m}{b_{1\ell}^{\pm}\tilde{c}_{1k}}\sum_{\substack{k-1\geq i\geq 1\\ step=-2}}i\left(\cos((\ell+i)\theta)+\cos((\ell-i)\theta)\right).\nonumber
 \end{align}

In what follows we will study the terms in the right hand side of \eqref{equa18}.

\begin{lem} \label{lem4} There exist a group of linear functions $ \mathcal{L}_3,\cdots, \mathcal{L}_{m-1}$ such that
\begin{equation*} \label{equa20}
\begin{split}
\sum_{k=2}^{m}\tilde{c}_{1k}\sum_{\substack{k-1\geq i\geq 1\\ step=-2}}i\theta\sin(i\theta)&=(m-1)\tilde{c}_{1m}\theta\sin((m-1)\theta) +(m-2)\tilde{c}_{1(m-1)}\theta\sin((m-2)\theta)\\
&\quad +(m-3)\left(\tilde{c}_{1(m-2)}+\mathcal{L}_3(\tilde{c}_{1m},\tilde{c}_{1(m-1)})\right)\theta\sin((m-3)\theta)
 +\cdots\\
&\quad +\left(\tilde{c}_{12}+\mathcal{L}_{m-1}(\tilde{c}_{1m},\tilde{c}_{1(m-1)},\cdots,\tilde{c}_{13})\right)\theta\sin\theta,
 \end{split}\end{equation*}
where $\mathcal{L}_j(x_1, \cdots, x_{j-1})=\lambda_{j1}x_1+\cdots +\lambda_{j(j-1)}x_{j-1}$ with $\lambda_{j1},\cdots,\lambda_{j(j-1)}\in \{0, 1\}$,    $j=3,\cdots, m-1$.
\end{lem}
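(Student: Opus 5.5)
The plan is to interchange the order of summation in the double sum. Then I will read off the coefficient of each $\theta\sin(i\theta)$ and check that it has the triangular form claimed in the statement.

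First, I describe the index set. The inner sum runs over $i=k-1,k-3,\ldots$ down to $1$ or $2$, so the pairs $(k,i)$ that occur are exactly those with $2\le k\le m$, $1\le i\le k-1$ and $k-1-i$ even. For fixed $i\in\{1,\ldots,m-1\}$ the admissible $k$ are therefore
\[
k=i+1,\ i+3,\ i+5,\ \ldots,
\]
with $k\le m$. Exchanging the two finite sums gives
\[
\sum_{k=2}^{m}\tilde{c}_{1k}\sum_{\substack{k-1\geq i\geq 1\\ step=-2}}i\theta\sin(i\theta)=\sum_{i=1}^{m-1} i\Big(\sum_{\substack{i+1\le k\le m\\ k\equiv i+1 \ (\mathrm{mod}\ 2)}}\tilde{c}_{1k}\Big)\theta\sin(i\theta).
\]

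Second, I identify each coefficient. Write $i=m-j+1$, so that $j=2,\ldots,m$. Then the coefficient of $i\,\theta\sin(i\theta)$ is
\[
\tilde{c}_{1(m-j+2)}+\tilde{c}_{1(m-j+4)}+\cdots,
\]
where the sum stops at the last index not exceeding $m$. For $i=m-1$ and $i=m-2$ only the leading term survives, giving $\tilde{c}_{1m}$ and $\tilde{c}_{1(m-1)}$ respectively. These are the first two terms in the statement.

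For $i=m-3$ the coefficient is $\tilde{c}_{1(m-2)}+\tilde{c}_{1m}$. In general, for $i=m-j+1$ with $3\le j\le m$, I set
\[
\mathcal{L}_j(\tilde{c}_{1m},\ldots,\tilde{c}_{1(m-j+3)})=\sum_{t\ge1,\ m-j+2+2t\le m}\tilde{c}_{1(m-j+2+2t)}.
\]
This is a linear form in the variables $x_1=\tilde{c}_{1m},\ldots,x_{j-2}=\tilde{c}_{1(m-j+3)}$, and its coefficients $\lambda_{js}$ equal $1$ exactly when $j-s$ is odd and $0$ otherwise. Hence every $\lambda_{js}\in\{0,1\}$. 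Setting the unused entries $\lambda_{j(j-1)}=0$ puts it in the stated format with $j-1$ slots. The case $i=1$, i.e.\ $j=m$, gives the last term $\big(\tilde{c}_{12}+\mathcal{L}_m\big)\theta\sin\theta$.

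There is no real obstacle here, since the argument is purely combinatorial. The only care needed is bookkeeping of the indexing: the statement labels the forms $\mathcal{L}_3,\ldots,\mathcal{L}_{m-1}$ yet its last term carries $\mathcal{L}_{m-1}(\tilde{c}_{1m},\ldots,\tilde{c}_{13})$, which is an off-by-one in the labels. I will simply define $\mathcal{L}_j$ by the formula above and note that the parity pattern (every other $\tilde{c}_{1k}$ above the leading one) is what makes the coefficient matrix unit upper-triangular. That triangularity is the property needed later for independence arguments.
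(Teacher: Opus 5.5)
Your argument is correct and is essentially the paper's own. Exchanging the two finite sums shows that the coefficient of $i\,\theta\sin(i\theta)$ is $\tilde c_{1(i+1)}+\tilde c_{1(i+3)}+\cdots$, which is the paper's identity \eqref{equa23} with last index $\alpha(m,i)$. One correction: the statement has no off-by-one. It attaches $\mathcal{L}_j$ to $\theta\sin((m-j)\theta)$, so $\mathcal{L}_3(\tilde c_{1m},\tilde c_{1(m-1)})$ sits at $i=m-3$, and $\mathcal{L}_{m-1}$, with its $m-2$ arguments $\tilde c_{1m},\dots,\tilde c_{13}$, sits at $i=1$. The shift comes from your substitution $i=m-j+1$ instead of $i=m-j$. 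With the statement's indexing, $\lambda_{js}=1$ exactly when $j-s$ is even, so $\lambda_{j(j-1)}=0$ automatically and no padding is needed.
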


\begin{proof}
By direct computations,  we obtain that when $m$ is odd,
\begin{equation*}\label{equa21}\begin{split}
&\quad \sum_{k=2}^{m}\tilde{c}_{1k}\sum_{\substack{k-1\geq i\geq 1\\ step=-2}}i\theta\sin(i\theta)\\
&=\big(\tilde{c}_{12}+\tilde{c}_{14}+\cdots+\tilde{c}_{1(m-1)}\big)\theta\sin(\theta)+2\big(\tilde{c}_{13}+\tilde{c}_{15}+\cdots+\tilde{c}_{1m}\big)\theta\sin(2\theta)\\
&\quad +3\big(\tilde{c}_{14}+\tilde{c}_{16}+\cdots+\tilde{c}_{1(m-1)}\big)\theta\sin(3\theta)+4\big(\tilde{c}_{15}+\tilde{c}_{17}+\cdots+\tilde{c}_{1m}\big)\theta\sin(4\theta)\\
&\quad +\cdots+(m-2)\tilde{c}_{1(m-1)}\theta\sin((m-2)\theta)+(m-1)\tilde{c}_{1m}\theta\sin((m-1)\theta).
\end{split}
\end{equation*}
And, when  $m$ is even,
\begin{equation*}\label{equa22}\begin{split}
&\quad \sum_{k=2}^{m}\tilde{c}_{1k}\sum_{\substack{k-1\geq i\geq 1\\ step=-2}}i\theta\sin(i\theta)\\
&=\big(\tilde{c}_{12}+\tilde{c}_{14}+\cdots+\tilde{c}_{1m}\big)\theta\sin\theta+2\big(\tilde{c}_{13}+\tilde{c}_{15}+\cdots+\tilde{c}_{1(m-1)}\big)\theta\sin(2\theta)\\
&\quad +3\big(\tilde{c}_{14}+\tilde{c}_{16}+\cdots+\tilde{c}_{1m}\big)\theta\sin(3\theta)+4\big(\tilde{c}_{15}+\tilde{c}_{17}+\cdots+\tilde{c}_{1(m-1)}\big)\theta\sin(4\theta)\\
&\quad +\cdots +(m-2)\tilde{c}_{1(m-1)}\theta\sin((m-2)\theta) +(m-1)\tilde{c}_{1m}\theta\sin((m-1)\theta).
\end{split}
\end{equation*}
Therefore, for any positive integer $m$ we have
\begin{align}\label{equa23}\begin{split}
\sum_{k=2}^{m}\tilde{c}_{1k}\sum_{\substack{k-1\geq i\geq 1\\ step=-2}}i\theta\sin(i\theta)&=(m-1)\tilde{c}_{1m}\theta\sin((m-1)\theta)+(m-2)\tilde{c}_{1(m-1)}\theta\sin((m-2)\theta)\\
&\quad +\sum_{i=1}^{m-3}i\bigg(\tilde{c}_{1(i+1)}+\tilde{c}_{1(i+3)}+\cdots+\tilde{c}_{1\alpha(m,i)}\bigg)\theta\sin(i\theta).
\end{split}
\end{align}
where $\alpha(m,i)=m-(1+(-1)^{m+i})/2$.  The conclusion follows easily from \eqref{equa23}.
\end{proof}

The next two lemmas  are aiming to study the last four terms in the right hand side of \eqref{equa18}.

\begin{lem} \label{lem5}  There exist a group of   functions $ {e}_2,\cdots, {e}_{m-1}$ and $u_0,\cdots, u_m$ such that
\begin{equation*} \label{equa25}
\begin{split}
&\quad \sum_{k=2}^{m}\sum_{\ell=1}^{m}b_{1\ell}^{\pm}\tilde{c}_{1k}\left(\frac{1}{\ell}\sum_{\substack{k-1\geq i\geq 1\\ step=-2}}i\left(\cos((\ell+i)\theta)-\cos((\ell-i)\theta)\right)+\sum_{\substack{k-1\geq i\geq 1\\ step=-2}}i\left(\cos((\ell+i)\theta)+\cos((\ell-i)\theta)\right)\right)\\
&= (m-1)\tilde{c}_{1m}\bigg[\frac{m+1}{m}b_{1m}^{\pm}\cos((2m-1)\theta)  +\left(\frac{m}{m-1}b_{1(m-1)}^{\pm}+e_2(b_{1m}^{\pm};\tilde{c}_{1m},\tilde{c}_{1(m-1)})\right)\cos((2m-2)\theta)\\
&\quad +\left(\frac{m-1}{m-2}b_{1(m-2)}^{\pm}+e_3(b_{1m}^{\pm},b_{1(m-1)}^{\pm};\tilde{c}_{1m},\tilde{c}_{1(m-1)},\tilde{c}_{1(m-2)})\right)\cos((2m-3)\theta)
 +\cdots\\
&\quad+\left(\frac{3}{2}b_{12}^{\pm}+e_{m-1}(b_{1m}^{\pm},\dots,b_{13}^{\pm};\tilde{c}_{1m},\cdots,\tilde{c}_{12})\right)\cos((m+1)\theta)\bigg]\\
&\quad+ \sum_{k=0}^{m}u_k(b_{10}^{\pm},\cdots,b_{1m}^{\pm};\tilde{c}_1,\cdots,\tilde{c}_m)\cos(k\theta),
\end{split}\end{equation*}
where $e_j(x_1,\cdots, x_{j-1};y_1,\cdots, y_j)=0$ if $y_1=\cdots=y_j=0$.
\end{lem}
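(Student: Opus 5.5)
The plan is to expand the left-hand side by frequency and sort the cosines into two groups. In the first group, \emph{high} frequencies $m+1,\dots,2m-1$, we compute the coefficients exactly. In the second group, \emph{low} frequencies $0,\dots,m$, we only need a polynomial dependence on the $b_{1\ell}^{\pm}$ and $\tilde c_{1k}$, and these terms are simply absorbed into $u_0,\dots,u_m$.

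\emph{Step 1 (pairwise combination).} For fixed $k,\ell,i$ the bracket equals
\[
i\Big(\tfrac{1}{\ell}+1\Big)\cos((\ell+i)\theta)+i\Big(1-\tfrac1\ell\Big)\cos((\ell-i)\theta),
\]
so the contribution is $b_{1\ell}^{\pm}\tilde c_{1k}\,i\frac{\ell+1}{\ell}\cos((\ell+i)\theta)$ plus a term in $\cos((\ell-i)\theta)$. Since $1\le i\le k-1\le m-1$ and $1\le \ell\le m$, we have $|\ell-i|\le m-1$. Because cosine is even, every $\cos((\ell-i)\theta)$ term has frequency in $\{0,\dots,m-1\}$ and goes into the $u_k$. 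The same happens to every $\cos((\ell+i)\theta)$ with $\ell+i\le m$. Each coefficient of $\cos(k\theta)$, $0\le k\le m$, is then a polynomial $u_k$ in $b_{10}^{\pm},\dots,b_{1m}^{\pm}$ and the $\tilde c_{1j}$. The sums run over $\ell\ge1$, so $b_{10}^{\pm}$ enters only formally.

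\emph{Step 2 (high frequencies).} Fix $j\in\{1,\dots,m-1\}$ and collect all triples with $\ell+i=2m-j\ge m+1$. These satisfy $i\le m-1$, so $\ell\ge m-j+1$, and $\ell\le m$, so $i\ge m-j$.

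The extremal triple is $\ell=m-j+1$, $i=m-1$, and this value of $i$ is attained only for $k=m$ (the step-$2$ list for $k$ starts at $k-1$). It contributes $(m-1)\tilde c_{1m}\frac{m-j+2}{m-j+1}b_{1(m-j+1)}^{\pm}$; for $j=1$ this is $(m-1)\tilde c_{1m}\frac{m+1}{m}b_{1m}^{\pm}$, as claimed.

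All remaining triples have $\ell\in\{m-j+2,\dots,m\}$, $i=2m-j-\ell\in[m-j,m-2]$, and $k\ge i+1\ge m-j+1$ with $k\equiv i+1 \pmod 2$. Their total is
\[
\sum_{\ell=m-j+2}^{m} b_{1\ell}^{\pm}\,\frac{\ell+1}{\ell}\,(2m-j-\ell)\sum_{k}\tilde c_{1k},
\]
which involves only $b_{1m}^{\pm},\dots,b_{1(m-j+2)}^{\pm}$ ($j-1$ variables) and $\tilde c_{1m},\dots,\tilde c_{1(m-j+1)}$ ($j$ variables). Moreover, it is linear in the $\tilde c$'s.

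Define $e_j$ as this sum divided by $(m-1)\tilde c_{1m}$. The coefficient of $\cos((2m-j)\theta)$ then equals $(m-1)\tilde c_{1m}\big(\frac{m-j+2}{m-j+1}b_{1(m-j+1)}^{\pm}+e_j\big)$, which is exactly the displayed bracket. The index pattern $(b_{1m}^{\pm},\dots,b_{1(m-j+2)}^{\pm};\tilde c_{1m},\dots,\tilde c_{1(m-j+1)})$ matches the statement, with $j=m-1$ giving the $\cos((m+1)\theta)$ term with $\frac32 b_{12}^{\pm}$.

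\emph{Main obstacle.} The delicate point is the factorisation by $(m-1)\tilde c_{1m}$. The numerator of $e_j$ is linear in the $\tilde c$'s, so it vanishes when all of them vanish, which is the required property $e_j=0$. However, the quotient is only literally defined for $\tilde c_{1m}\ne0$. I would handle this by working on the open dense set $\tilde c_{1m}\neq0$, which is the generic situation used later when the Chebyshev family is realised. On that set $e_j$ is a genuine function of the listed arguments, and it extends by $0$ when all its $\tilde c$-arguments vanish.

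The remaining care is pure bookkeeping:
\begin{itemize}
\item parity: $i$ runs over $k-1,k-3,\dots$, which is the reason only $k=m$ reaches $i=m-1$;
\item ranges: one must check that no frequency above $2m-1$ occurs, since $\ell+i\le m+(m-1)$.
\end{itemize}
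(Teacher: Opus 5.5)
Your proposal is correct and follows essentially the paper's argument. You split each bracket into the part $\frac{\ell+1}{\ell}\,i\cos((\ell+i)\theta)$ and the part in $\cos((\ell-i)\theta)$, absorb every frequency $\le m$ into the $u_k$, and read off the coefficients of $\cos((m+1)\theta),\dots,\cos((2m-1)\theta)$, with your explicit triple-counting supplying the ``direct calculations'' the paper omits. Your remark that $e_j$ only makes sense when $\tilde c_{1m}\neq 0$ is a real point the paper's proof passes over, and restricting to that case matches the Convention $c_{1m}^{+}=c_{1m}^{-}\neq 0$ the paper imposes later.
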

\begin{proof}
Using mathematical induction, it is not hard to see by direct calculations that
\begin{align} \label{equa26}
\begin{split}
&\quad \sum_{k=2}^{m}\sum_{\ell=1}^{m}\left(\frac{b_{1\ell}^{\pm}\tilde{c}_{1k}}{\ell}\sum_{\substack{k-1\geq i\geq 1\\ step=-2}}i\cos((\ell+i)\theta)+{b_{1\ell}^{\pm}\tilde{c}_{1k}}\sum_{\substack{k-1\geq i\geq 1\\ step=-2}}i\cos((\ell+i)\theta)\right)\\
& = \sum_{k=2}^{m}\sum_{\ell=1}^{m}\frac{\ell+1}{\ell}b_{1\ell}^{\pm}\tilde{c}_{1k}\sum_{\substack{k-1\geq i\geq 1\\ step=-2}}i\cos((\ell+i)\theta)\\
&= (m-1)\tilde{c}_{1m}\bigg[\frac{m+1}{m}b_{1m}^{\pm}\cos((2m-1)\theta)+\left(\frac{m}{m-1}b_{1(m-1)}^{\pm}+e_2(b_{1m}^{\pm};\tilde{c}_{1m},\tilde{c}_{1(m-1)})\right)\cos((2m-2)\theta)\\
&\quad +\left(\frac{m-1}{m-2}b_{1(m-2)}^{\pm}+e_3(b_{1m}^{\pm},b_{1(m-1)}^{\pm};\tilde{c}_{1m},\tilde{c}_{1(m-1)},\tilde{c}_{1(m-2)})\right)\cos((2m-3)\theta) +\cdots\\
&\quad+\left(\frac{3}{2}b_{12}^{\pm}+e_{m-1}(b_{1m}^{\pm},\dots,b_{13}^{\pm};\tilde{c}_{1m},\cdots,\tilde{c}_{12})\right)\cos((m+1)\theta)\bigg]\\
&\quad+ \sum_{k=0}^{m}\tilde{u}_k(b_{10}^{\pm},\cdots,b_{1m}^{\pm};\tilde{c}_1,\cdots,\tilde{c}_m)\cos(k\theta),
\end{split}\end{align}
where $ {e}_j \ (j=2,\cdots,m-1)$ is the function of $b_{1m}^{\pm},\dots,b_{1(m+2-j)}^{\pm},\tilde{c}_{1m},\cdots,\tilde{c}_{1(m+2-j)}$ with the property that
$ {e}_j =0$ if  $\tilde{c}_{1m}=\cdots=\tilde{c}_{1(m+2-j)}=0$.   We omit the expression of these functions since it can not affect the        independence of the coefficients of
$\cos((m+1)\theta), \cdots, \cos((2m-1)\theta)$.

Obviously, the conclusion follows by observing that
\begin{equation*}
 \sum_{k=2}^{m}\sum_{\ell=1}^{m}\left(-\frac{b_{1\ell}^{\pm}\tilde{c}_{1k}}{\ell}\sum_{\substack{k-1\geq i\geq 1\\ step=-2}}i\cos((\ell-i)\theta)+{b_{1\ell}^{\pm}\tilde{c}_{1k}}\sum_{\substack{k-1\geq i\geq 1\\ step=-2}}i\cos((\ell-i)\theta)\right)
\end{equation*}
 is a linear combination of $ 1, \cos\theta, \cdots, \cos(m\theta)$  with coefficients being determined by $b_{10}^{\pm},\cdots,b_{1m}^{\pm};$ $\tilde{c}_1,$ $\cdots,$ $\tilde{c}_m$.  \end{proof}

 In a completely analogous  way  we can get the following result.
\begin{lem} \label{lem6}  There exist a group of   functions $ {f}_2,\cdots, {f}_{m-1}$ and $v_0,\cdots, v_m$ such that
\begin{equation*} \label{equa25}
\begin{split}
&\quad \sum_{k=2}^{m}\sum_{\ell=1}^{m}a_{1\ell}^{\pm}\tilde{c}_{1k}\left(\frac{1}{\ell}\sum_{\substack{k-1\geq i\geq 1\\ step=-2}}i\left(\sin((\ell+i)\theta)-\sin((\ell-i)\theta)\right)+\sum_{\substack{k-1\geq i\geq 1\\ step=-2}}i\left(\sin((\ell+i)\theta)+\sin((\ell-i)\theta)\right)\right)\\
&= (m-1)\tilde{c}_{1m}\bigg[\frac{m+1}{m}a_{1m}^{\pm}\sin((2m-1)\theta)+\left(\frac{m}{m-1}a_{1(m-1)}^{\pm}+f_2(a_{1m}^{\pm};\tilde{c}_{1m},\tilde{c}_{1(m-1)})\right)\sin((2m-2)\theta)\\
&\quad +\left(\frac{m-1}{m-2}a_{1(m-2)}^{\pm}+f_3(a_{1m}^{\pm},a_{1(m-1)}^{\pm};\tilde{c}_{1m},\tilde{c}_{1(m-1)},\tilde{c}_{1(m-2)})\right)\sin((2m-3)\theta) +\cdots\\
&\quad+\left(\frac{3}{2}a_{12}^{\pm}+f_{m-1}(a_{1m}^{\pm},\dots,a_{13}^{\pm};\tilde{c}_{1m},\cdots,\tilde{c}_{12})\right)\sin((m+1)\theta)\bigg]\\
&\quad+ \sum_{k=1}^{m}v_k(a_{10}^{\pm},\cdots,a_{1m}^{\pm};\tilde{c}_1,\cdots,\tilde{c}_m)\sin(k\theta),
\end{split}\end{equation*}
where $f_j(x_1,\cdots, x_{j-1};y_1,\cdots, y_j)=0$ if $y_1=\cdots=y_j=0$.
\end{lem}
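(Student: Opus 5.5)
The plan is to mirror the proof of Lemma \ref{lem5}, replacing cosines by sines throughout. I will use the product-to-sum identities that already produced the last terms of \eqref{equa18}:
\[
2\cos(\ell\theta)\sin(i\theta)=\sin((\ell+i)\theta)-\sin((\ell-i)\theta),\qquad 2\sin(\ell\theta)\cos(i\theta)=\sin((\ell+i)\theta)+\sin((\ell-i)\theta).
\]
First I split the left-hand side into two parts. The part built from the terms $\sin((\ell+i)\theta)$ combines into
\[
\sum_{k=2}^{m}\sum_{\ell=1}^{m}\frac{\ell+1}{\ell}\,a_{1\ell}^{\pm}\tilde{c}_{1k}\sum_{\substack{k-1\geq i\geq 1\\ step=-2}}i\sin((\ell+i)\theta).
\]
The part built from the terms $\sin((\ell-i)\theta)$ involves only the frequencies $|\ell-i|\le m-1$. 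Since $\sin$ is odd, $\sin((\ell-i)\theta)=-\sin((i-\ell)\theta)$, and the zero frequency vanishes. So this second part is a linear combination of $\sin\theta,\dots,\sin(m\theta)$, with coefficients that are polynomial functions of the $a_{1\ell}^{\pm}$ and $\tilde c_{1k}$. It will be absorbed into $\sum_{k=1}^m v_k\sin(k\theta)$, together with every term of the first part whose frequency $\ell+i$ is at most $m$.

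Next I isolate the frequencies $m+j$ with $j=1,\dots,m-1$ in the first part. To get frequency $2m-s$ we need $\ell+i=2m-s$ with $\ell\le m$ and $i\le k-1\le m-1$. For $s=1$ the only option is $\ell=m$, $i=m-1$, $k=m$, which gives $(m-1)\tilde c_{1m}\frac{m+1}{m}a_{1m}^{\pm}$.

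For general $s$ there are two kinds of contributions. The pair $\ell=m-s+1$, $i=m-1$, $k=m$ gives the leading coefficient $(m-1)\tilde c_{1m}\frac{m-s+2}{m-s+1}a_{1(m-s+1)}^{\pm}$, which is the displayed $\frac{m-s+2}{m-s+1}a^{\pm}_{1(m-s+1)}$ term. All remaining pairs have $\ell>m-s+1$ and $i<m-1$, which forces $i\ge m-s$ and hence $k\ge m-s+1$. So these pairs involve only $a_{1m}^{\pm},\dots,a_{1(m-s+2)}^{\pm}$ and $\tilde c_{1m},\dots,\tilde c_{1(m-s+1)}$, and every one of them carries a factor $\tilde c_{1k}$.

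Dividing out the common factor $(m-1)\tilde c_{1m}$ and naming the rest $f_{s}$ gives the stated form. This division is formal, exactly as in Lemma \ref{lem5}: one defines $f_s$ so that the identity holds with the factor written in front, and notes that $f_s$ vanishes whenever all the $\tilde c$'s involved vanish. I will check this vanishing property and the indexing $f_s(a_{1m}^\pm,\dots,a_{1(m-s+2)}^\pm;\tilde c_{1m},\dots,\tilde c_{1(m-s+1)})$ directly from the description of the contributing pairs above.

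The main obstacle is purely bookkeeping: making sure that no $a_{1(m-s+1)}^{\pm}$ term leaks into $f_s$ through some pair other than $(\ell,i)=(m-s+1,m-1)$, so that the triangular structure holds. The constraint $i\le m-1$ rules this out, because $\ell=m-s+1$ forces $i=m-1$. Beyond that check, the argument is the same induction on $m$ used for \eqref{equa26}.
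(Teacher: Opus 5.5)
Your proposal is correct and is essentially the paper's argument. The paper proves this lemma only by saying it is analogous to Lemma \ref{lem5}: merge the $\sin((\ell+i)\theta)$ terms into $\frac{\ell+1}{\ell}a_{1\ell}^{\pm}\tilde c_{1k}$, push the $\sin((\ell-i)\theta)$ terms into the $v_k$, and read off a triangular structure in the frequencies $m+1,\dots,2m-1$. Your explicit enumeration of the triples $(\ell,i,k)$, in particular the observation that $\ell=m-s+1$ forces $(i,k)=(m-1,m)$, just makes the paper's ``direct calculation'' concrete.

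One caution concerns the division by $(m-1)\tilde c_{1m}$, which is legitimate only under the Convention $\tilde c_{1m}\neq 0$. The vanishing property then really describes the undivided remainder, whose monomials each carry some $\tilde c_{1k}$, and not $f_s$ itself. For example, for $m\geq 4$ the triple $(m,m-3,m)$ puts the $\tilde c$-free term $\frac{(m+1)(m-3)}{m(m-1)}a_{1m}^{\pm}$ into $f_3$. This is harmless, because only the triangular dependence on $a_{1(m-s+1)}^{\pm}$ is used in Proposition \ref{prop3}.
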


From  equality \eqref{m2}, \eqref{equa18} and Lemma  \ref{lem4}-\ref{lem6}, we can write $m_2(\theta)$ as the linear combination of the sine and cosine functions.

\begin{prop} \label{prop3}  There exists a group of independent parameters  $$\beta_0^{\pm}; a_0^{\pm},\cdots,a_{2m-1}^{\pm}; b_0^{\pm},\cdots,b_{2m-1}^{\pm}; c_2,\cdots,c_{m}$$ such that
\begin{equation} \label{equa27}
\begin{split}
&\quad m_2^{\pm}(\theta)=\beta_0^{\pm}\sum_{k=1}^{m-1} c_{k+1} \theta\sin(k\theta)+\sum_{k=0}^{m}\left(a_{k}^{\pm}\sin(k\theta)+b_{k}^{\pm}\cos(k\theta)\right)+ c_{m} \sum_{k=m+1}^{2m-1}\left(a_{k}^{\pm}\sin(k\theta)+b_{k}^{\pm}\cos(k\theta)\right).
\end{split}\end{equation}
\end{prop}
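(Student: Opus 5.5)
The plan is to combine \eqref{m2} with the expansion \eqref{equa18} of $(R/\sin\theta)'$, and to simplify its terms one group at a time using Lemmas \ref{lem4}--\ref{lem6}. We then regroup everything by the harmonics $\theta\sin(k\theta)$, $\sin(k\theta)$, $\cos(k\theta)$ and rename the resulting coefficients as new parameters. The content of the proposition is that, after this renaming, the new parameters can be chosen freely (independently) on each zone. The only coupling that survives is through the factor $b_{10}^{\pm}$ in the $\theta\sin$ part and the factor $\tilde c_{1m}$ in the high harmonics.

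\textbf{Step 1: the $\theta\sin(k\theta)$ terms.} In \eqref{equa18} the only term containing $\theta$ is $-2b_{10}^{\pm}\sum_{k}\tilde c_{1k}\sum_i i\theta\sin(i\theta)$. By Lemma \ref{lem4} it equals
\[
-2b_{10}^{\pm}\sum_{k=1}^{m-1}k\bigl(\tilde c_{1(k+1)}+\mathcal L(\cdots)\bigr)\theta\sin(k\theta).
\]
So we set $\beta_0^{\pm}=2b_{10}^{\pm}$ and $c_{k+1}=-k\bigl(\tilde c_{1(k+1)}+\mathcal L_{m-k}(\tilde c_{1m},\dots,\tilde c_{1(k+2)})\bigr)$. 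This map from $(\tilde c_{12},\dots,\tilde c_{1m})$ to $(c_2,\dots,c_m)$ is triangular with nonzero diagonal, hence invertible. In particular $c_m=-(m-1)\tilde c_{1m}$, so the $c_j$ are independent. The remaining terms of \eqref{equa18} not involving products with $\sin(\ell\pm i)$ are trigonometric polynomials of degree $\le m$, with coefficients depending on $b_{10}^\pm$, $P_1^\pm(\theta_1)$, $a_{1\ell}^\pm$, $b_{1\ell}^\pm$ and $\hat c_{11}$. The last four sums of \eqref{equa18} are handled by Lemmas \ref{lem5} and \ref{lem6}. Their low-order parts ($\le m$) join the previous group. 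Their high-order parts, at frequencies $m+1,\dots,2m-1$, carry the common factor $(m-1)\tilde c_{1m}=-c_m$.

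\textbf{Step 2: defining the free parameters.} We add $\tilde Q_2^{\pm}$, whose coefficients $\tilde c_{2k}^{\pm},\tilde d_{2k}^{\pm}$ ($k\le m$) appear linearly and are otherwise unconstrained. The total coefficient of $\sin(k\theta)$, $\cos(k\theta)$ for $k\le m$ is then $\tilde c_{2k}^{\pm}$ (resp.\ $\tilde d_{2k}^{\pm}$) plus a function of first-order data. We define this total to be $a_k^\pm$, $b_k^\pm$; since the $Q_2^{\pm}$ coefficients are free, so are $a_k^\pm,b_k^\pm$ ($k\le m$), regardless of the other parameters. For $m+1\le k\le 2m-1$ we define $a_k^{\pm}$ and $b_k^{\pm}$ through the brackets in Lemmas \ref{lem6} and \ref{lem5}, absorbing the sign and the factors $\tfrac{j+1}{j}$:
\begin{align*}
a_{2m-1}^{\pm}&\propto \tfrac{m+1}{m}a_{1m}^{\pm},\\
a_{2m-2}^{\pm}&\propto \tfrac{m}{m-1}a_{1(m-1)}^{\pm}+f_2,
\end{align*}
and so on. Because $f_j$ depends only on $a_{1m}^\pm,\dots$ with larger index, the map $(a_{1m}^{\pm},\dots,a_{12}^{\pm})\mapsto(a_{2m-1}^{\pm},\dots,a_{m+1}^{\pm})$ is triangular with nonzero diagonal. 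For fixed $\tilde c$ it is a diffeomorphism, and the same holds for the $b$'s.

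\textbf{Main obstacle: independence across zones.} The delicate point is that the first-order data are constrained by Proposition \ref{prop1+1}. The $\tilde c_{1k}$ are common to both zones, while $a_{1\ell}^{\pm},b_{1\ell}^{\pm}$ are free separately on each zone, subject only to $p_{10}=0$. The constraint $p_{10}=0$ involves only $b_{10}^\pm$ and the $a_{1\ell}^\pm$ through $\int\sin$, and it can be absorbed by adjusting a low-order coefficient. We need to check that the Jacobian of the full parameter map (from $b_{10}^\pm$, $a_{1\ell}^\pm,b_{1\ell}^\pm$ for $\ell\ge2$, $\tilde c_{1k}$, $\tilde c_{2k}^\pm,\tilde d_{2k}^\pm$ to the new parameters) has full rank at generic points. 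Its block-triangular structure should give this: the $Q_2$ coefficients feed only the low block, the $a_{1\ell},b_{1\ell}$ feed the high block triangularly, $b_{10}$ feeds $\beta_0$, and the $\tilde c$'s feed the $c_j$. Some care is needed for the case $\theta_1=2\pi$, where the $\pm$ parameters coincide. There the same argument applies with one zone only.
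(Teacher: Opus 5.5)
Your proposal is correct and follows the paper's own proof. You set $\beta_0^{\pm}=2b_{10}^{\pm}$, take the $c_j$ as the triangular combinations of the $\tilde c_{1k}$ from Lemma \ref{lem4}, read off the harmonics $m+1,\dots,2m-1$ from the triangular brackets of Lemmas \ref{lem5}--\ref{lem6}, and absorb all harmonics of order $\le m$ into $\tilde c_{2k}^{\pm},\tilde d_{2k}^{\pm}$; your block-triangular rank discussion just spells out what the paper calls ``direct.''

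Two harmless slips. First, because $m_2=\tilde Q_2-(R/\sin\theta)'$, your $c_{k+1}$ (or $\beta_0^{\pm}$) needs the opposite sign. Second, at $\theta_1=2\pi$ the constraint $p_{10}=2\pi b_{10}^{+}=0$ cannot be absorbed by any other coefficient, so $\beta_0^{+}=0$ is forced there, as the paper itself uses in Proposition \ref{pro5cor}. For $\theta_1\in(0,2\pi)$ you can absorb $p_{10}=0$ into $a_{11}^{+}$, whose coefficient $1-\cos\theta_1$ is nonzero and which only feeds harmonics of order $\le m$.
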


\begin{proof}  By \eqref{m2}, $m_2(\theta)=\tilde{Q}_2(\theta)-(R(\theta)/\sin \theta)'$.
Using the  symbols in Lemma  \ref{lem4}, Lemma \ref{lem5} and  Lemma \ref{lem6}, we let
\begin{equation*} \label{equa28}
\begin{split}
&\beta_0^{\pm}=2 b_{10}^{\pm};\ c_m=(m-1)\tilde{c}_{1m},\ c_{m-1}=(m-2)\tilde{c}_{1(m-1)},\\
&  c_{k}=(k-1)\tilde{c}_{1k}+\mathcal{L}_{m+1-k}(\tilde{c}_{1m},\cdots, \tilde{c}_{1(k+1)}),\ k=2,\cdots,m-2;\\
& a_{2m-1}^{\pm}= -\frac{m+1}{m}a_{1m}^{\pm}, \
  b_{2m-1}^{\pm}=-\frac{m+1}{m}b_{1m}^{\pm}, \\
&  a_{k}^{\pm}= -\frac{k+2-m}{k+1-m}a_{1(k+1-m)}^{\pm}-f_{2m-k}\left(a_{1m}^{\pm},\cdots,a_{1(k+2-m)}^{\pm};c_{1m}^{\pm},\cdots,c_{1(k+1-m)}^{\pm}\right),         \\
&  b_{k}^{\pm}= -\frac{k+2-m}{k+1-m}b_{1(k+1-m)}^{\pm}-e_{2m-k}\left(b_{1m}^{\pm},\cdots,b_{1(k+2-m)}^{\pm};c_{1m}^{\pm},\cdots,c_{1(k+1-m)}^{\pm}\right),
\end{split}\end{equation*}
 {for}  $\ k=m+1,\cdots, 2m-2$.
Further, let

\begin{equation*} \label{equa29}
\begin{split}
&\hat{c}_{11}\sum_{\ell=0}^{m}\big(a_{1\ell}^{{\pm}}\sin \ell\theta + b_{1\ell}^{{\pm}}\cos \ell\theta\big)+2 b_{10}^{\pm}\sum_{k=2}^{m}\tilde{c}_{1k}\sum_{\substack{k-1\geq i\geq 1\\ step=-2}}\cos(i\theta)-2{P}_1^{\pm}(\theta_1)\sum_{k=2}^{m}\tilde{c}_{1k}\sum_{\substack{k-1\geq i\geq 1\\ step=-2}}i\sin(i\theta)\\
&\quad+\sum_{k=0}^{m}u_k(b_{10}^{\pm},\cdots,b_{1m}^{\pm};\tilde{c}_1,\cdots,\tilde{c}_m)\cos(k\theta)+ \sum_{k=1}^{m}v_k(a_{10}^{\pm},\cdots,a_{1m}^{\pm};\tilde{c}_1,\cdots,\tilde{c}_m)\sin(k\theta)\\
&=\sum_{k=0}^{m}\bar{u}_k(b_{10}^{\pm},\cdots,b_{1m}^{\pm};\tilde{c}_1,\cdots,\tilde{c}_m)\cos(k\theta)+ \sum_{k=1}^{m}\bar{v}_k(\theta_1;a_{10}^{\pm},\cdots,a_{1m}^{\pm};b_{10}^{\pm},\cdots,b_{1m}^{\pm};\tilde{c}_1,\cdots,\tilde{c}_m)\sin(k\theta)
\end{split}\end{equation*}
and let
\begin{equation*} \label{equa30}
\begin{split}
&a_k^{\pm}=\tilde{c}_{2k}^{\pm}-\bar{v}_k(\theta_1;a_{10}^{\pm},\cdots,a_{1m}^{\pm};b_{10}^{\pm},\cdots,b_{1m}^{\pm};\tilde{c}_1,\cdots,\tilde{c}_m),\\
&b_k^{\pm}=\tilde{d}_{2k}^{\pm}-\bar{u}_k(b_{10}^{\pm},\cdots,b_{1m}^{\pm};\tilde{c}_1,\cdots,\tilde{c}_m),
\end{split}\end{equation*}
for $ k=0,1,\cdots, m$.

The conclusion  follows from \eqref{m2},  \eqref{equa18}, Lemma  \ref{lem4}, Lemma \ref{lem5} and  Lemma\ref{lem6} directly.
\end{proof}

With the above preparation,  we are able to write the second order Melnikov functions as a linear combination of the functions defined by \eqref{equa2-1}.
First of all, from \eqref{equa10-1}   and \eqref{equa27}, in order to produce as more as possible  zeros of $M_2(\rho)$, it is natural to assume that $c_m\neq 0$.
Thus, we make the following convention.

{\bf {Convention}.} In the rest of this paper assume that $c_{1m}^+=c_{1m}^-\neq 0$.

\begin{prop} \label{prop5}  Assume  that   $M_1(\rho)\equiv 0$.  Then  there exists a group of independent parameters
\begin{equation} \label{freepar-1}\beta_1, \beta_2; \eta_1,\cdots,\eta_{m-1};\lambda_1, \cdots,\lambda_{2m-1}; \mu_1^{(1)}, \cdots,\mu_{2m-1}^{(1)};  \mu_1^{(2)}, \cdots,\mu_{2m-1}^{(2)},\end{equation}
  such that
\begin{equation} \label{equam2pro-3}
\begin{split}
M_2(\rho)&=p_{20}+\sum_{k=1}^{m-1}\eta_{k}\left(\beta_1\mathcal{D}_{k}^{E_1}(\rho,\alpha)+\beta_2\mathcal{D}_{k}^{E_2}(\rho,\alpha)\right)\\
&+\sum_{k=1}^{2m-1}\lambda_k\mathcal{S}_{k}^{E_1}(\rho,\alpha)  -\pi\beta_2\sum_{k=1}^{m-1}\eta_{k}\mathcal{S}_{k}^{E_2}(\rho,\alpha)
+\sum_{k=0}^{2m-1}\left(\mu_k^{(1)}\mathcal{C}_{k}^{E_1}(\rho,\alpha)+\mu_k^{(2)}\mathcal{C}_{k}^{E_2}(\rho,\alpha)\right),
\end{split}\end{equation}
where $E_1=[0,\theta_1]$, $E_2=[\theta_1, \pi]$ when $\theta_1\in (0,\pi)$, and  $E_1=[0,2\pi-\theta_1]$, $E_2=[2\pi-\theta_1, \pi]$ when $\theta_1\in (\pi, 2\pi)$.
\end{prop}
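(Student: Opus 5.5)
The plan is to start from \eqref{equa10-1}, $M_2(\rho)=p_{20}+\int_0^{2\pi}m_2(\theta)y_0(\theta,\rho)^{\alpha}d\theta$, with $y_0=1+\rho-\cos\theta$. We split the integral into $\int_0^{\theta_1}m_2^+y_0^\alpha+\int_{\theta_1}^{2\pi}m_2^-y_0^\alpha$ and substitute the expansion \eqref{equa27} of Proposition \ref{prop3}. Each piece then becomes a linear combination of $\mathcal{D}_k^{[0,\theta_1]},\mathcal{S}_k^{[0,\theta_1]},\mathcal{C}_k^{[0,\theta_1]}$ and of the same functions on $[\theta_1,2\pi]$, with $k\le 2m-1$ and $\beta=\alpha$. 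The coefficient of $\mathcal{D}_k$ is $\beta_0^{\pm}c_{k+1}$, and the coefficients of $\mathcal{S}_k,\mathcal{C}_k$ are $a_k^{\pm},b_k^{\pm}$. For $k>m$ these $\mathcal{S}_k,\mathcal{C}_k$ coefficients are multiplied by $c_m$, which is nonzero by the Convention.

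Next we apply Lemma \ref{lemforSCD} to move every integral onto $E_1,E_2$.

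\textbf{Case $\theta_1\in(0,\pi)$.} By part (i) of the lemma:
\begin{itemize}
\item $\mathcal{S}^{[\theta_1,2\pi]}_k=-\mathcal{S}^{E_1}_k$;
\item $\mathcal{C}^{[\theta_1,2\pi]}_k=\mathcal{C}^{E_1}_k+2\mathcal{C}^{E_2}_k$;
\item $\mathcal{D}^{[\theta_1,2\pi]}_k=\mathcal{D}^{E_1}_k+2\mathcal{D}^{E_2}_k-2\pi(\mathcal{S}^{E_1}_k+\mathcal{S}^{E_2}_k)$.
\end{itemize}
Collecting terms gives
\[
\sum_k c_{k+1}\big[(\beta_0^++\beta_0^-)\mathcal{D}^{E_1}_k+2\beta_0^-\mathcal{D}^{E_2}_k\big]-2\pi\beta_0^-\sum_k c_{k+1}\big(\mathcal{S}^{E_1}_k+\mathcal{S}^{E_2}_k\big)
\]
plus combinations of $\mathcal{S}^{E_1}_k$, $\mathcal{C}^{E_1}_k$ and $\mathcal{C}^{E_2}_k$. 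We then set
\[
\eta_k=c_{k+1},\quad \beta_1=\beta_0^++\beta_0^-,\quad \beta_2=2\beta_0^-,
\]
so that the term $-2\pi\beta_0^-c_{k+1}\mathcal{S}^{E_2}_k$ becomes exactly $-\pi\beta_2\eta_k\mathcal{S}^{E_2}_k$. The $\mathcal{S}^{E_1}$ contribution $-2\pi\beta_0^-\eta_k$ is absorbed into $\lambda_k$. We define
\begin{itemize}
\item $\lambda_k=a_k^+-a_k^- -\pi\beta_2\eta_k$ (with the factor $c_m$ for $k>m$);
\item $\mu^{(1)}_k=b_k^++b_k^-$ and $\mu^{(2)}_k=2b_k^-$ (again with $c_m$ for $k>m$).
\end{itemize}

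\textbf{Case $\theta_1\in(\pi,2\pi)$.} This is analogous using part (ii): the $+$-zone integral over $[0,\theta_1]$ produces the $E_2$-terms, and the $-$-zone integral over $[\theta_1,2\pi]$ only produces $E_1$-terms. One gets $\beta_2=2\beta_0^+$ and $\beta_1=\beta_0^++\beta_0^-$, with the $-2\pi\mathcal{S}^{E_2}$ term again equal to $-\pi\beta_2\eta_k\mathcal{S}^{E_2}_k$.

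The main obstacle is showing that the resulting parameters are \emph{independent}, i.e.\ that the map from the original coefficients to \eqref{freepar-1} is onto an open set. The plan is a triangular argument using Proposition \ref{prop3}, whose parameters are already independent. First, $\beta_0^{\pm}$ are free, so $(\beta_1,\beta_2)$ are free. Second, $c_2,\dots,c_m$ are free with $c_m\ne0$, so the $\eta_k$ are free. Third, since $c_m\neq0$, the pairs $(a_k^{+},a_k^-)$ and $(b_k^+,b_k^-)$ for $0\le k\le 2m-1$ are free. The map $(a^\pm,b^\pm)\mapsto(\lambda,\mu^{(1)},\mu^{(2)})$ is then a surjective linear map once the fixed shift $-\pi\beta_2\eta_k$ is accounted for. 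Some care is needed for $k\ge m$, where the $\mathcal{D}$-term is absent and $\eta_k$ is undefined, and for $k=0$, where $\mathcal{S}_0\equiv0$. This last point explains why $\lambda$ starts at index $1$, while the $\mu$'s start at $0$ (with $\mu_0$ possibly merging with $p_{20}$, which is kept separately).
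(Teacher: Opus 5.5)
Your proposal is correct and is essentially the paper's proof: you substitute Proposition~\ref{prop3} into \eqref{equa10-1}, fold the integrals onto $E_1,E_2$ with Lemma~\ref{lemforSCD}, and define $\beta_1,\beta_2,\eta_k,\lambda_k,\mu_k^{(1)},\mu_k^{(2)}$ exactly as the paper does; the paper normalizes $c_m=1$ rather than carrying the factor $c_m$, and merely asserts the case $\theta_1\in(\pi,2\pi)$ and the independence, both of which you spell out correctly. There are two harmless slips: for $\theta_1\in(\pi,2\pi)$ the shift absorbed into $\lambda_k$ is $-2\pi\beta_0^-\eta_k$ rather than $-\pi\beta_2\eta_k$, which does not affect surjectivity; and $\mu_0$ never merges with $p_{20}$, since $\mathcal{C}_0^{E}(\rho,\alpha)$ is not constant in $\rho$.
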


\begin{proof} Without loss of generality we assume that $c_m=-\alpha(m-1)\tilde{c}_{1m}=1$.     From \eqref{equa10-1} and Proposition \ref{prop3}, it follows that
  \begin{equation} \label{equam2pro-1}
\begin{split}
M_2(\rho)&=p_{20}+\sum_{k=1}^{m-1}c_{k+1}\left(\beta_0^{+}\mathcal{D}_{k}^{[0,\theta_1]}(\rho,\alpha)+\beta_0^{-}\mathcal{D}_{k}^{[\theta_1,2\pi]}(\rho,\alpha)\right)\\
&+\sum_{k=0}^{2m-1}\left(a_{k}^{+}\mathcal{S}_{k}^{[0,\theta_1]}(\rho,\alpha)+a_{k}^{-}\mathcal{S}_{k}^{[\theta_1,2\pi]}(\rho,\alpha)+b_{k}^{+}\mathcal{C}_{k}^{[0,\theta_1]}(\rho,\alpha)+b_{k}^{-}\mathcal{C}_{k}^{[\theta_1,2\pi]}(\rho,\alpha)\right),
\end{split}\end{equation}

Suppose first that  $\theta_1\in (0,\pi]$, then by letting   $E_1=[0,\theta_1]$, $E_2=[\theta_1, \pi]$, it follows from \eqref{equam2pro-1} and Lemma \ref{lemforSCD} that
 \begin{equation} \label{equam2pro-2}
\begin{split}
M_2(\rho)&=p_{20}+\sum_{k=1}^{m-1}c_{k+1}\left((\beta_0^{+}+\beta_0^{-})\mathcal{D}_{k}^{E_1}(\rho,\alpha)+2\beta_0^{-}\mathcal{D}_{k}^{E_2}(\rho,\alpha)\right)\\
&+\sum_{k=1}^{m-1}\left((a_{k}^{+}-a_{k}^{-}-2\pi\beta_0^{-}c_{k+1})\mathcal{S}_{k}^{E_1}(\rho,\alpha)-2\pi\beta_0^{-}c_{k+1}\mathcal{S}_{k}^{E_2}(\rho,\alpha)\right)\\
&+\sum_{k=m}^{2m-1}(a_{k}^{+}-a_{k}^{-})\mathcal{S}_{k}^{E_1}(\rho,\alpha)
+\sum_{k=0}^{2m-1}\left((b_{k}^{+}+b_{k}^{-})\mathcal{C}_{k}^{E_1}(\rho,\alpha)+2b_{k}^{-}\mathcal{C}_{k}^{E_2}(\rho,\alpha)\right).
\end{split}\end{equation}
In order to simplify the notations, we let $$\beta_1=\beta_0^{+}+\beta_0^{-}, \ \ \beta_2=2\beta_0^{-};$$  $$\eta_k={c_{k+1}}, \lambda_k=a_{k}^{+}-a_{k}^{-}-2\pi\beta_0^{-}c_{k+1} \ \mbox{ for}\ k=1, \cdots, m-1;$$
$$\lambda_k=a_{k}^{+}-a_{k}^{-} \ \mbox{ for}\ k=m, \cdots, 2m-1;$$
 $$\mu_k^{(1)}=b_{k}^{+}+b_{k}^{-},   \mu_k^{(2)}=2b_{k}^{-}  \ \mbox{ for}\  k=0, 1, \cdots, 2m-1. $$
 It is not hard to see that,
$$\beta_1, \beta_2; \eta_1,\cdots,\eta_{m-1};\lambda_1, \cdots,\lambda_{2m-1}; \mu_1^{(1)}, \cdots,\mu_{2m-1}^{(1)};  \mu_1^{(2)}, \cdots,\mu_{2m-1}^{(2)}$$
are independent.
Then, \eqref{equam2pro-2} turns out to be   \eqref{equam2pro-3}.

For the case that $\theta_1\in (\pi, 2\pi)$, the proof can be done exactly in the way of the case that $\theta_1\in (0,\pi]$. We omit the detail for the sake of brevity. This finish the proof.
\end{proof}

\begin{prop} \label{pro5cor}  Suppose that   $M_1(\rho)\equiv 0$ and that $\theta_1=2\pi$. Let   $E=[0,\pi]$, then
     there exists a group of   independent parameters
  $\mu_0,  \mu_1, \cdots,\mu_{2m-1}$,     such that
\begin{equation*} \label{equam2cor-2}
M_2(\rho)=2\pi b_{20}^++\sum_{k=0}^{2m-1}\mu_{k}\mathcal{C}_{k}^{E}(\rho,\alpha).
\end{equation*}
\end{prop}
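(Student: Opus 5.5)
The plan is to specialize the general machinery to the smooth case $\theta_1=2\pi$. Here all "$+$" and "$-$" coefficients coincide, so $m_2^+=m_2^-$ on $[0,2\pi]$. I start from \eqref{equa10-1} together with Proposition \ref{prop3}. By Proposition \ref{prop1+1}(i), $M_1\equiv0$ with $\theta_1=2\pi$ gives $\tilde d_{1k}=0$, so $\tilde Q_1=\sum_k \tilde c_{1k}\sin(k\theta)$. It also gives $p_{10}=2\pi b_{10}^+=0$, hence $b_{10}^\pm=0$ and therefore $\beta_0^\pm=2b_{10}^\pm=0$. This removes all $\theta\sin(k\theta)$ terms in \eqref{equa27}, so that $m_2(\theta)=\sum_{k=0}^{2m-1}(a_k\sin(k\theta)+b_k\cos(k\theta))$, where for $k\ge m+1$ the coefficients carry the common factor $c_m$.

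Next I integrate against $y_0(\theta,\rho)^\alpha=(\rho+1-\cos\theta)^\alpha$ over $[0,2\pi]$. Under $\theta\mapsto2\pi-\theta$ the weight is even, so every $\mathcal S_k^{[0,2\pi]}$ vanishes; this is the case $\theta_1=2\pi$ of Lemma \ref{lemforSCD}(ii). For the same reason $\mathcal C_k^{[0,2\pi]}=2\mathcal C_k^{[0,\pi]}$. Since $p_{20}=\int_0^{2\pi}P_2=2\pi b_{20}^+$, this yields
\[
M_2(\rho)=2\pi b_{20}^+ + \sum_{k=0}^{2m-1}2b_k\,\mathcal C_k^{E}(\rho,\alpha),
\]
and I set $\mu_k=2b_k$. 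If $c_m$ is normalized as in Proposition \ref{prop5}, the factor $c_m$ is absorbed into the $\mu_k$ with $k\ge m+1$.

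The remaining step, and the only real obstacle, is checking that $\mu_0,\dots,\mu_{2m-1}$ are independent, i.e.\ that they can be prescribed arbitrarily by choosing the original parameters. For $k=0,\dots,m$ this follows from Proposition \ref{prop3}: $b_k=\tilde d_{2k}-\bar u_k(\cdots)$, and $\tilde d_{2k}$ is free because the second-order data $Q_2$ does not enter anything else. For $k=m+1,\dots,2m-1$ I use the triangular structure from Lemma \ref{lem5}. With $b_{10}=0$, the coefficient of $\cos(k\theta)$ is
\[
-\frac{k+2-m}{k+1-m}b_{1(k+1-m)}-e_{2m-k}\bigl(b_{1m},\dots,b_{1(k+2-m)};\tilde c_{1m},\dots\bigr),
\]
multiplied by $c_m\neq0$. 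Here $e_{2m-k}$ depends only on $b_{1j}$ with $j>k+1-m$. Going in decreasing order $k=2m-1,2m-2,\dots,m+1$, I solve successively for $b_{1m},b_{1(m-1)},\dots,b_{12}$. The map $(b_{12},\dots,b_{1m})\mapsto(\mu_{m+1},\dots,\mu_{2m-1})$ is therefore a triangular bijection with nonzero diagonal. Finally, the $\tilde d_{2k}$ adjust $\mu_0,\dots,\mu_m$ independently of these choices, which gives the claimed independence.
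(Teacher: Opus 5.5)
Your proposal is correct and is essentially the paper's proof. As there, $p_{10}=2\pi b_{10}^+=0$ forces $\beta_0^+=0$, the symmetry $\theta\mapsto 2\pi-\theta$ of $y_0$ kills every $\mathcal S_k^{[0,2\pi]}$ and gives $\mathcal C_k^{[0,2\pi]}=2\mathcal C_k^{[0,\pi]}$, and you set $\mu_k=2b_k^+$; your explicit independence check (free $\tilde d_{2k}$ for $k\le m$, triangular back-substitution in $b_{1m},\dots,b_{12}$ for $k\ge m+1$) fills in what the paper simply inherits from Proposition~\ref{prop3}, and only the nonvanishing of the diagonal coefficients, guaranteed by $\tilde c_{1m}\neq 0$, matters there, not their exact values.
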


\begin{proof}
 Since $M_1(\rho)\equiv 0$ and  $\theta_1=2\pi$, we have
 $2\pi b_{10}^+=p_{10}=0$, this implies that $\beta_0^+=2 b_{10}^+=0$.
Thus,  from \eqref{equam2pro-1} it yields  that
\begin{equation*} \label{equam2pro-1-2}
M_2(\rho)=p_{20}+\sum_{k=0}^{2m-1}\left(a_{k}^{+}\mathcal{S}_{k}^{[0,\theta_1]}(\rho,\alpha)+b_{k}^{+}\mathcal{C}_{k}^{[0,\theta_1]}(\rho,\alpha)\right)=2\pi b_{20}^++\sum_{k=0}^{2m-1}2b_{k}^{+}\mathcal{C}_{k}^{[0,\pi]}(\rho,\alpha).
\end{equation*}
The conclusion follows immediately  by letting $\mu_k=2b_{k}^{+}$,  $k=0,1,\cdots, 2m-1$. \end{proof}

\subsubsection{Expression of $M_{2}(\rho)$ for $\theta_1=\pi$}

\hspace{0.3cm}

In case that $\theta_1=\pi$, we can not use the integration by parts for  \eqref{equadefI} directly  since $R(\theta)/\sin \theta$ is not continuously differentiable on $[0, 2\pi]$.
Due to the  technical difficulties,  throughout  this subsection, we will impose the following condition

{(\bf{H})   }  $a_{1k}^+=a_{1k}^-$ for $k=1, 2, \cdots, m$.

\begin{prop} \label{cor1}   Suppose that   $M_1(\rho)\equiv 0$ and that $\theta_1=\pi$. Let   $E=[0,\pi]$, then, under the assumption {(\bf{H})},  there exists a group of independent parameters
\begin{equation*} \label{freepar-1}\varpi_1,  \varpi_2; \lambda_1, \cdots,\lambda_{2m-1}; \mu_0, \mu_1, \cdots,\mu_{2m-1},\end{equation*}
   such that
\begin{equation*} \label{equam2cor-1}
M_2(\rho)=p_{20}
+\varpi_1 \rho^{\alpha}+\varpi_2(\rho+2)^{\alpha}+\sum_{k=1}^{2m-1}\lambda_k\mathcal{S}_{k}^{E}(\rho,\alpha)
+\sum_{k=0}^{2m-1}\mu_k\mathcal{C}_{k}^{E}(\rho,\alpha).
\end{equation*}
\end{prop}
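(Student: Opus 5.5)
The plan is to redo the computation of $M_2$ from \eqref{equa7-2} for $\theta_1=\pi$. The term $\int_0^{2\pi}\tilde Q_2y_0^{\alpha}\,d\theta$ causes no difficulty: folding it with Lemma \ref{lemforSCD} (with $\theta_1=\pi$, so $E_2$ is empty) gives combinations of $\mathcal{S}_k^{E}$ and $\mathcal{C}_k^{E}$, $k\le m$. The real work is the term $\alpha I(\rho)$, with $I$ given by \eqref{equadefI}. By Proposition \ref{prop1+1} we have $p_{10}=0$, $\tilde c_{1k}^+=\tilde c_{1k}^-=:\tilde c_{1k}$ and $\tilde d_{1k}^+=-\tilde d_{1k}^-=:\tilde d_{1k}$. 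Hence
$$\tilde Q_1=\sum_{k=1}^m\tilde c_{1k}\sin(k\theta)\pm\sum_{k=0}^m\tilde d_{1k}\cos(k\theta),$$
with the sign $+$ on $[0,\pi)$ and $-$ on $[\pi,2\pi]$. Also $p_{10}=0$ gives $b_{10}^+=-b_{10}^-$, and (H) makes the $a$-coefficients of $P_1$ common to both zones. I would write $\int_0^\theta P_1$ explicitly on each zone as in the computation after \eqref{equa14}.

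\textbf{Step 1: fold $I(\rho)$ onto $E=[0,\pi]$.} I substitute $\theta\to2\pi-\theta$ on $[\pi,2\pi]$, using $y_0(2\pi-\theta,\rho)=y_0(\theta,\rho)$. This gives
$$I(\rho)=\int_0^\pi G(\theta)\,y_0(\theta,\rho)^{\alpha-1}\,d\theta,\qquad G(\theta)=R(\theta)+R(2\pi-\theta).$$
Under (H) and $b_{10}^+=-b_{10}^-$, the following should happen. The $\sin(k\theta)$-part of $\tilde Q_1$ is odd under the reflection and the $\cos$-part changes sign across $\pi$. The primitive $\int_0^\theta P_1$ transforms with a constant shift $\int_0^{2\pi}P_1=p_{10}=0$. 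Combining these, the $\theta$-linear pieces (coming from $b_{10}^\pm\theta$) should cancel or turn into constants. So $G$ should be a genuine trigonometric polynomial of degree $\le 2m$ on $[0,\pi]$, with no terms of the form $\theta\sin(k\theta)$; this is why no $\mathcal D_k$ appears in the statement. I would verify this cancellation explicitly first, since it is exactly where (H) is used.

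\textbf{Step 2: integrate by parts on $[0,\pi]$.} The obstruction of \S2.2.1 is that $G/\sin\theta$ need not be regular at $0$ or $\pi$. I would split $G(\theta)=G_0(\theta)+\sin\theta\,\tilde G(\theta)$, where $G_0$ is a trigonometric polynomial such that $\alpha\int_0^\pi G_0\,y_0^{\alpha-1}d\theta$ can be computed in closed form. The natural choice is $G_0=\kappa\sin\theta$ corrected so that the remainder $G-G_0$ vanishes at $0$ and $\pi$; then $(G-G_0)/\sin\theta$ is analytic on $[0,\pi]$. Integration by parts, as in \eqref{equa9}, then gives
$$\frac{G-G_0}{\sin\theta}\,y_0^{\alpha}\Big|_0^\pi-\int_0^\pi\Big(\frac{G-G_0}{\sin\theta}\Big)'y_0^{\alpha}\,d\theta .$$
Since $y_0(0,\rho)=\rho$ and $y_0(\pi,\rho)=\rho+2$, the boundary evaluations produce exactly $\varpi_1\rho^\alpha+\varpi_2(\rho+2)^\alpha$. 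The remaining integrand is a trigonometric polynomial of degree $\le 2m-1$, which is expanded via the Chebyshev identities \eqref{secondcheply} and the derivative formula for $U_k$, as in Lemmas \ref{lem5}–\ref{lem6}. Its sine part gives $\sum\lambda_k\mathcal{S}_k^{E}$ and its cosine part gives $\sum\mu_k\mathcal{C}_k^{E}$.

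\textbf{Step 3: independence.} As in Proposition \ref{prop3}, I would track the top-degree coefficients, which are of the form $\tfrac{k+2-m}{k+1-m}a_{1(k+1-m)}$ or $b_{1(k+1-m)}$ times $c_m$, plus lower-triangular corrections. The coefficients of degree $\le m$ are freely shifted by $\tilde c_{2k}^\pm,\tilde d_{2k}^\pm$. Together these show that $(\lambda_k,\mu_k)$ can be prescribed independently. For $\varpi_1,\varpi_2$, they are the boundary values of $G/\sin\theta$ at $0$ and $\pi$, which depend on the $\tilde d_{1k}$ and on $\int_0^\pi P_1$ (through $b_{10}^\pm$). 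I would check that this map has rank two and that it is not absorbed by the $\mu_k$, using the $\tilde d_{1k}$ and $b_{10}^+$ freedom.

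I expect Step 2 to be the main obstacle: handling the endpoint singularity of $G/\sin\theta$ correctly, and confirming that the boundary contributions are precisely the two free terms $\rho^\alpha$ and $(\rho+2)^\alpha$ with no leftover non-family term. The cancellation of the $\theta$-linear terms in Step 1 also needs care.
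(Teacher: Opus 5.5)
Your Step 1 is sound. It is the paper's reduction in another form: the paper keeps only the odd part of $\theta\mapsto\int_0^{\theta+\pi}P_1$ and obtains $I(\rho)=2\int_0^\pi\tilde Q_1(\theta)P_{\mathrm o}(\theta-\pi)y_0^{\alpha-1}\,d\theta$, which is your $\int_0^\pi G\,y_0^{\alpha-1}\,d\theta$.

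However, you stop short of the one fact that makes the rest work. Carrying the fold out, you have $\tilde Q_1(2\pi-\theta)=-\tilde Q_1^+(\theta)$ and $\int_0^{2\pi-\theta}P_1=-\int_0^\theta P_1^-(2\pi-u)\,du$, hence
\begin{equation*}
G(\theta)=\tilde Q_1^+(\theta)\int_0^\theta\big(P_1^+(u)+P_1^-(2\pi-u)\big)\,du=\tilde Q_1^+(\theta)\sum_{k=1}^m\frac{b_{1k}^++b_{1k}^-}{k}\sin(k\theta).
\end{equation*}
Here (H) kills the terms $(a_{1k}^+-a_{1k}^-)(1-\cos k\theta)/k$, and $b_{10}^++b_{10}^-=0$ kills the $\theta$-term. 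The latter follows from $p_{10}=0$ only together with (H), since in general $p_{10}=\pi(b_{10}^++b_{10}^-)+2\sum_{k\ \mathrm{odd}}(a_{1k}^+-a_{1k}^-)/k$.

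Because the second factor is a sine polynomial, $G/\sin\theta=\tilde Q_1^+T$ with $T=\sum_k\frac{b_{1k}^++b_{1k}^-}{k}U_{k-1}(\cos\theta)$. This is already a trigonometric polynomial of degree $\le 2m-1$. There is no endpoint singularity, and integration by parts gives directly $\varpi_1=-\tilde Q_1^+(0)T(0)$ and $\varpi_2=\tilde Q_1^+(\pi)T(\pi)$.

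Your $G_0$ device is therefore vacuous. Worse, it could not help if it were needed: $\kappa\sin\theta$ vanishes at $0$ and $\pi$, so subtracting it cannot change endpoint values. A nonvanishing $G(0)$ or $G(\pi)$ would leave an integral against $y_0^{\alpha-1}$ (such as $\mathcal{C}_0^E(\rho,\alpha-1)$), which is not of the stated form. For the same reason your hedge ``cancel or turn into constants'' is not harmless: leftover constants would be fatal.

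The genuine failure is in Step 3. You import from Proposition \ref{prop3} the mechanism in which the coefficients of degree $m+1,\dots,2m-1$ are prescribed through $a_{1\ell}^\pm,b_{1\ell}^\pm$, with the top coefficient of $\tilde Q_1$ held fixed. Here the $a_{1\ell}$ have disappeared altogether, and $P_1$ enters only through $T$. The coefficients of $\cos(j\theta)$ in $T$ for $1\le j\le m-1$ are just $m-1$ numbers; the constant term of $T$ affects only degrees $\le m$. With $\tilde Q_1$ fixed, $\tilde Q_1^+T$ is linear in $T$, so these $m-1$ numbers cannot prescribe the $2m-2$ quantities $\lambda_k,\mu_k$ for $m<k\le 2m-1$. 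Both the part $(\sum\tilde c_{1k}\sin k\theta)T$ and the part $(\sum\tilde d_{1k}\cos k\theta)T$ are driven by the same $T$.

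The roles have to be reversed, as in the paper:
\begin{itemize}
\item Fix $P_1$ so that $T$ has a nonzero leading coefficient. The paper takes $s_1=\dots=s_{m-2}=0$ and $s_m\ne0$.
\item Then $\tilde c_{12},\dots,\tilde c_{1m}$ determine the high $\cos(k\theta)$-coefficients of $(\tilde Q_1^+T)'$, that is, the $\mu_k$.
\item Likewise $\tilde d_{12},\dots,\tilde d_{1m}$ determine the high $\sin(k\theta)$-coefficients, that is, the $\lambda_k$.
\item Each of these is a triangular system whose diagonal is proportional to the leading coefficient of $T$.
\end{itemize}

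Finally, $\varpi_1$ and $\varpi_2$ are proportional to $\tilde Q_1^+(0)=\sum_k\tilde d_{1k}$ and $\tilde Q_1^+(\pi)=\sum_k(-1)^k\tilde d_{1k}$, provided $T(0)T(\pi)\neq0$. They can be adjusted through $\tilde d_{10}$ and $\tilde d_{11}$, which touch only degrees $\le m$; those low degrees are absorbed by $\tilde c_{2k}^\pm,\tilde d_{2k}^\pm$, as you say. Contrary to what you wrote, $\varpi_1,\varpi_2$ do not depend on $b_{10}^\pm$ or $\int_0^\pi P_1$, since these cancel in the fold. The survival of the $\tilde d_{1k}$ under $M_1\equiv0$ at $\theta_1=\pi$ is exactly what makes both the high sine coefficients and the two boundary terms free.
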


\begin{proof}
Denote by $P_{\mbox{\small{o}}}(\theta)$ and $P_{\mbox{\small{e}}}(\theta)$
  the odd part and the even part  of $\int_{0}^{\theta+\pi}P_1(s)ds$ respectively, then
   \begin{equation*} \int_{0}^{\theta+\pi}P_1(s)ds=P_{\mbox{\small{o}}}(\theta)+P_{\mbox{\small{e}}}(\theta)=\frac{1}{2}\int_{-\theta}^{\theta}P_1(s+\pi)ds+P_{\mbox{\small{e}}}(\theta).\end{equation*}
From \eqref{equadefI},  \eqref{equadefR}, and notice that $\tilde{Q}_1(\theta+\pi)$ is odd in $[-\pi, \pi]$,  we have that
\begin{equation} \label{cor1equa-1}
\begin{split}
I(\rho)&=\int_{0}^{2\pi}R(\theta)\left(y_0(\theta,\rho)\right)^{\alpha-1} d\theta\\
&=\int_{-\pi}^{\pi}\tilde{Q}_1(\theta+\pi)\left(y_0(\theta+\pi,\rho)\right)^{\alpha-1}\int_{0}^{\theta+\pi}P_1(s)ds d\theta\\
&=2\int_{-\pi}^{0}\tilde{Q}_1(\theta+\pi)P_{\mbox{\small{o}}}(\theta)\left(y_0(\theta+\pi,\rho)\right)^{\alpha-1} d\theta.
\end{split}\end{equation}
Using the condition {(\bf{H})}, we get by  a direct  calculation that
\begin{equation} \label{poequa-1}P_{\mbox{\small{o}}}(\theta)=\frac{1}{2}(b_{10}
^++b_{10}^-)\theta-\frac{1}{2}\sum_{k=1}^m\frac{b_{1k}
^++b_{1k}^-}{k}\sin (k\theta).\end{equation}

 Since   \begin{equation}\label{b1+--0} P_{\mbox{\small{o}}}(\pi)=\frac{1}{2}\int_{0}^{2\pi}P_1(s)ds=p_{10}=0,\end{equation}
 it follows from \eqref{poequa-1} that  $b_{10}
^++b_{10}^-=0$.
Therefore,   $P_{\mbox{\small{o}}}(\theta)/\sin \theta$, and hence $P_{\mbox{\small{o}}}(\theta)/\sin (\theta+\pi)$,
is  a trigonometric polynomial  of degree no more than $m-1$. This means that
 $\left(\tilde{Q}_1(\theta+\pi)P_{\mbox{\small{o}}}(\theta)/\sin (\theta+\pi)\right)'$ is a  trigonometric polynomial  of degree no more than $2m-1$. Therefore,
  we can get by  \eqref{cor1equa-1}   that
 \begin{equation} \label{cor1equa-2}
\begin{split}
I(\rho)&=2\int_{-\pi}^{0}\frac{\tilde{Q}_1(\theta+\pi)P_{\mbox{\small{o}}}(\theta)}{\sin (\theta+\pi)}dy_0^{\alpha}(\theta+\pi,\rho)\\
&=\frac{2\tilde{Q}_1(\theta+\pi)P_{\mbox{\small{o}}}(\theta)}{\sin (\theta+\pi)}y_0^{\alpha}(\theta+\pi,\rho)\bigg|_{-\pi}^0-2\int_{-\pi}^{0}\bigg(\frac{\tilde{Q}_1(\theta+\pi)P_{\mbox{\small{o}}}(\theta)}{\sin (\theta+\pi)}\bigg)' y_0^{\alpha}(\theta+\pi,\rho)d \theta\\
&=\frac{2\tilde{Q}_1(\theta)P_{\mbox{\small{o}}}(\theta-\pi)}{\sin \theta}y_0^{\alpha}(\theta,\rho)\bigg|_{0}^{\pi}-2\int_{0}^{\pi}\bigg(\frac{\tilde{Q}_1(\theta)P_{\mbox{\small{o}}}(\theta-\pi)}{\sin \theta}\bigg)' y_0^{\alpha}(\theta,\rho)d \theta\\
&= \frac{2\tilde{Q}_1(\theta)P_{\mbox{\small{o}}}(\theta-\pi)}{\sin \theta}y_0^{\alpha}(\theta,\rho)\bigg|_{0}^{\pi}+\sum_{k=0}^{2m-1}\omega_k\mathcal{C}_{k}^{E}(\rho,\alpha)+\sum_{k=1}^{2m-1}\sigma_k\mathcal{S}_{k}^{E}(\rho,\alpha)\\
& = \varpi_1 \rho^{\alpha}+\varpi_2(\rho+2)^{\alpha} +\sum_{k=0}^{2m-1}\omega_k\mathcal{C}_{k}^{E}(\rho,\alpha)+\sum_{k=1}^{2m-1}\sigma_k\mathcal{S}_{k}^{E}(\rho,\alpha),\end{split}\end{equation}
where $ \varpi_1,  \varpi_2; \omega_1,\cdots, \omega_{2m-1};\sigma_0,\cdots,\sigma_{2m-1}$ are the real numbers being dependent on only the coefficients of $P_1(\theta)$ and $Q_1(\theta)$. In what follows we will
show that this group of numbers are independent.

With the equalities  \eqref{poequa-1}  and  \eqref{b1+--0},   we can obtain    that
\begin{equation} \label{poequa-12}P_{\mbox{\small{o}}}(\theta-\pi)=\sum_{k=1}^ms_k\sin (k\theta),\end{equation}
where
$$s_k=\frac{1}{2}(-1)^{k+1}\frac{b_{1k}
^++b_{1k}^-}{k}, \ k=1,\cdots, m.$$

In what follows we   choose  the values of $b_{11}^{\pm},  b_{12}^{\pm}, \cdots, b_{1 (m-2)}^{\pm}$   such that   $s_1=\cdots=s_{m-2}=0$.
Using the formula \eqref{secondcheply},
 we get from \eqref{poequa-12} and Proposition  \ref{prop1+1} that for $ \theta\in [0,\pi]$,

\begin{equation*} \label{poequa-14}
\begin{split}
& \frac{\tilde{Q}_1(\theta)P_{\mbox{\small{o}}}(\theta-\pi)}{\sin (\theta)}\\
&=\left(\sum_{k=1}^{m}\tilde{c}_{1k}\sin (k\theta)+\sum_{k=0}^m\tilde{d}_{1k}\cos (k\theta)\right)\left(s_{m}U_{m-1}(\cos \theta)+s_{m-1}U_{m-2}(\cos \theta)\right)\\
&=2s_{m}\sum_{k=1}^{m}\tilde{c}_{1k}\sum_{\substack{m-1\geq i\geq 1\\ step=-2}}\sin (k\theta)\cos(i\theta)+2s_{m-1}\sum_{k=1}^{m}\tilde{c}_{1k}\sum_{\substack{m-2\geq i\geq 1\\ step=-2}}\sin (k\theta)\cos(i\theta)\\
&\quad +2s_{m}\sum_{k=0}^m\tilde{d}_{1k}\sum_{\substack{m-1\geq i\geq 1\\ step=-2}}\cos (k\theta)\cos(i\theta) +2s_{m-1}\sum_{k=0}^m\tilde{d}_{1k}\sum_{\substack{m-2\geq i\geq 1\\ step=-2}}\cos (k\theta)\cos(i\theta)\\
&\quad +H_m(\cos\theta,\sin\theta)\\
&=s_{m}\sum_{k=2}^{m}\tilde{c}_{1k}\sum_{\substack{m-1\geq i\geq m+1-k\\ step=-2}}\sin ((k+i)\theta)+s_{m-1}\sum_{k=3}^{m}\tilde{c}_{1k}\sum_{\substack{m-2\geq i\geq m+1-k\\ step=-2}}\sin ((k+i)\theta)\\
&\quad+ s_{m}\sum_{k=2}^m\tilde{d}_{1k}\sum_{\substack{m-1\geq i\geq m+1-k\\ step=-2}}\cos ((k+i)\theta)+s_{m-1}\sum_{k=3}^m\tilde{d}_{1k}\sum_{\substack{m-2\geq i\geq m+1-k\\ step=-2}}\cos ((k+i)\theta)\\
&\quad  +\bar{H}_m(\cos\theta,\sin\theta),\\
\end{split}\end{equation*}
where $H_m(\cos\theta,\sin\theta), \bar{H}_m(\cos\theta,\sin\theta)\in  \langle 1, \cos\theta, \cdots, \cos(m\theta), \sin \theta, \cdots, \sin(m\theta)\rangle$, the linear space spanned by the functions $ 1, \cos\theta, \cdots, \cos(m\theta), \sin \theta, \cdots, \sin(m\theta)$.
A further calculation leads to

\begin{equation*}\label{poequa-15}
\begin{split}&\quad s_{m}\sum_{k=2}^{m}\tilde{c}_{1k}\sum_{\substack{m-1\geq i\geq m+1-k\\ step=-2}}\sin ((k+i)\theta)+s_{m-1}\sum_{k=3}^{m}\tilde{c}_{1k}\sum_{\substack{m-2\geq i\geq m+1-k\\ step=-2}}\sin ((k+i)\theta)\\
&= s_{m}\tilde{c}_{1m}\sin((2m-1)\theta)+\left( s_{m}\tilde{c}_{1(m-1)}+\chi^{(1)}_1{(\tilde{c}_{1m}}, s_{m-1})\right)\sin((2m-2)\theta)+\cdots\\
&\quad+\left( s_{m}\tilde{c}_{12}+\chi^{(1)}_{m-2}{(\tilde{c}_{13},\cdots,\tilde{c}_{1m}}, s_{m-1})\right)\sin((m+1)\theta)), \end{split}\end{equation*}
and
\begin{equation*}\label{poequa-15}
\begin{split}\quad & s_{m}\sum_{k=2}^m\tilde{d}_{1k}\sum_{\substack{m-1\geq i\geq m+1-k\\ step=-2}}\cos ((k+i)\theta)+s_{m-1}\sum_{k=3}^m\tilde{d}_{1k}\sum_{\substack{m-2\geq i\geq m+1-k\\ step=-2}}\cos ((k+i)\theta)\\
&= s_{m}\tilde{d}_{1m}\cos((2m-1)\theta)+\left( s_{m}\tilde{d}_{1(m-1)}+\chi^{(2)}_1{(\tilde{d}_{1m}}, s_{m-1})\right)\cos((2m-2)\theta)+\cdots\\
&\quad+\left( s_{m}\tilde{d}_{12}+\chi^{(2)}_{m-2}{(\tilde{d}_{13},\cdots,\tilde{d}_{1m}}, s_{m-1})\right)\cos((m+1)\theta), \end{split}\end{equation*}
where $\chi^{(i)}_1, \cdots, \chi^{(i)}_{m-2}$ are the fixed  functions,   $i=1, 2$.  Hence,
\begin{equation} \label{poequa-16}
\left(\frac{\tilde{Q}_1(\theta)P_{\mbox{\small{o}}}(\theta-\pi)}{\sin (\theta)}\right)'=\sum_{k=m+1}^{2m-1}\left(\omega_k\cos(k\theta)+\sigma_k\sin(k\theta)\right)+\bar{H}'_m(\cos\theta,\sin\theta),  \end{equation}
where  $$\omega_k=k(s_{m}\tilde{c}_{1(k+1-m)}+\chi^{(1)}_{2m-k-1}(\tilde{c}_{1(k+2-m)},\cdots,\tilde{c}_{1m}, s_{m-1}), $$
$$ \sigma_k=-k( s_{m}\tilde{d}_{1(k+1-m)}+\chi^{(2)}_{2m-k-1}(\tilde{d}_{1(k+2-m)},\cdots,\tilde{d}_{1m}, s_{m-1}),$$
for  $ k=m+1,\cdots, 2m-2$. It is obviously that  $\omega_{m+1},\cdots, \omega_{2m-1};\sigma_{m+1},\cdots, \sigma_{2m-1}$ is a group of independent parameters whenever $s_m\neq 0$.

Using \eqref{poequa-16} and
 \eqref{cor1equa-2}, it can easily be seen that
\begin{equation*} \label{poequa-17}
\begin{split}
I(\rho)&=\tilde{Q}_1(\theta)U_{m-1}(\cos \theta)y_0^{\alpha}(\theta,\rho)\bigg|_{0}^{\pi}-2\sum_{k=0}^{2m-1}\left(\sigma_k\mathcal{S}_{k}^{E}(\rho,\alpha)+\omega_k\mathcal{C}_{k}^{E}(\rho,\alpha)\right),
\end{split}\end{equation*}
  where $\omega_0,\cdots, \omega_{m};\sigma_0,\cdots,\sigma_{m}$ are the real numbers being dependent on only the coefficients of $P_1(\theta)$ and $Q_1(\theta)$.

On the other hand, we get
from  \eqref{equa7-2} and Lemma \ref{lemforSCD} that
$$M_{2}(\rho)=p_{20}+\sum_{k=0}^{m}\left((\tilde{c}_{2k}^{+}-\tilde{c}_{2k}^{-})\mathcal{S}_{k}^{E}(\rho,\alpha)+(\tilde{d}_{2k}^{+}+\tilde{d}_{2k}^{-})\mathcal{C}_{k}^{E}(\rho,\alpha)\right)+\alpha I(\rho).$$
Consequently,
\begin{equation*} \label{poequa-17}\begin{split}
M_{2}(\rho)&=p_{20}+\alpha\tilde{Q}_1(\theta)U_{m-1}(\cos \theta)y_0^{\alpha}(\theta,\rho)\bigg|_{0}^{\pi}+\sum_{k=1}^{m}(\tilde{c}_{2k}^{+}-\tilde{c}_{2k}^{-}-2\alpha \sigma_k)\mathcal{S}_{k}^{E}(\rho,\alpha)\\
&\quad +\sum_{k=0}^{m}(\tilde{d}_{2k}^{+}+\tilde{d}_{2k}^{-}-2\alpha \omega_k)\mathcal{C}_{k}^{E}(\rho,\alpha)
-2\alpha\sum_{k=m+1}^{2m-1}\left(\sigma_k\mathcal{S}_{k}^{E}(\rho,\alpha)+\omega_k\mathcal{C}_{k}^{E}(\rho,\alpha)\right)\\
&=p_{20}+\varpi_1y_0^{\alpha}(0,\rho)+\varpi_2y_0^{\alpha}(\pi,\rho)
+\sum_{k=1}^{2m-1}\lambda_k\mathcal{S}_{k}^{E}(\rho,\alpha)
+\sum_{k=0}^{2m-1}\mu_k\mathcal{C}_{k}^{E}(\rho,\alpha)\\
&=p_{20}+\varpi_1 \rho^{\alpha}+\varpi_2(\rho+2)^{\alpha}
+\sum_{k=1}^{2m-1}\lambda_k\mathcal{S}_{k}^{E}(\rho,\alpha)
+\sum_{k=0}^{2m-1}\mu_k\mathcal{C}_{k}^{E}(\rho,\alpha),\end{split}
\end{equation*}
where $$\varpi_1=-\alpha\left(ms_m+(m-1)s_{m-1}\right)\sum_{k=0}^{m}\tilde{d}_{1k}, \quad  \varpi_2=\alpha(ms_m-(m-1)s_{m-1})\sum_{k=0}^{m}(-1)^{m+k}\tilde{d}_{1k},$$
$$\lambda_k=\tilde{c}_{2k}^{+}-\tilde{c}_{2k}^{-}-2\alpha \sigma_k\ \mbox{ for} \ k=1,\cdots,m, \quad \mu_k=\tilde{d}_{2k}^{+}+\tilde{d}_{2k}^{-}-2\alpha \omega_k \ \mbox{ for} \ k=0,1,\cdots,m,$$   and  $\lambda_k=-2\alpha \sigma_k$, $\mu_k=-2\alpha \omega_k$ for $k=m+1,\cdots, 2m-1$.
It is clear that $ \lambda_1, \cdots,\lambda_{2m-1}; \mu_0, \mu_1, \cdots,\mu_{2m-1}$ is a group of independent parameters.

 The proof finishes observing that the real numbers $\varpi_1, \varpi_2,; \lambda_1, \cdots,\lambda_{2m-1}; \mu_0, \mu_1, \cdots,\mu_{2m-1}$
are independent.

\end{proof}

Now we are in position  to explain   that why we assume throughout this paper that $ \frac{q-1}{p-1}\notin \mathbf{Z}_{\leq 1}$.
  \begin{rem} The assumption that  $ \frac{q-1}{p-1}\notin \mathbf{Z}_{\leq1}$  is  natural. In fact, if $ \frac{q-1}{p-1}\in \mathbf{Z}_{\leq 1}$, then  $\alpha=\frac{q-p}{1-p}=-\frac{q-1}{p-1}+1$   becomes  a non-negative integer. From the conclusions of this section, it follows   that the first and  second order Melnikov functions $M_i(\rho) \ (i=1,2)$
are polynomials of $\rho$. Thus the study of the  maximum number of zeros of
$M_i(\rho) (i=1,2)$  is   trivial.
\end{rem}

\section{  Chebyshev family for the first non-vanishing Melnikov function}
The proof of the main result of this paper rely on the theory of  Chebyshev system. We first recall the definitions of T , CT and ECT system, the notions of continuous,
and discrete Wronskian and a useful characterization of CT and ECT -system. For more information on this subject, the readers are referred to the monographs \cite{KS1966}.

\begin{defn} Let $ f_0,  f_1,  \cdots,  f_{n-1}$  be analytic functions on an open interval $I$.

\begin{itemize}

\item[\textsc{(}a\textsc{)}] $ (f_0,  f_1,  \cdots,  f_{n-1})$  is a Chebyshev system (T-system) on $I$  if any nontrivial linear combination
$$\alpha_0f_0(x)+\alpha_1f_1(x)+\cdots+\alpha_{n-1}f_{n-1}(x)$$
has at most $n-1$ isolated zeros on $I$ .

\item[\textsc{(}b\textsc{)}] $ (f_0,  f_1,  \cdots,  f_{n-1})$  is a complete Chebyshev system (CT-system) on   $I$  if $ (f_0,  f_1,  \cdots,  f_{k-1})$
is a T-system on $I$ for all $k=1, 2, \cdots, n$.

\item[\textsc{(}c\textsc{)}] $ (f_0,  f_1,  \cdots,  f_{n-1})$  is an extended complete Chebyshev system (ECT-system) on $I$ if, for all $k=1, 2, \cdots, n$, any nontrivial linear combination
$$\alpha_0f_0(x)+\alpha_1f_1(x)+\cdots+\alpha_{k-1}f_{k-1}(x)$$
has at most $k-1$ isolated zeros on $I$ counting multiplicity.

\end{itemize}

Generally, it is hard to judge  whether   an   ordered set of functions is
a T-system by use the definition.  The notion of the Wronskian proves to be an extremely useful tool in this problem.

\begin{defn}
  Let $f_0,\ldots,f_{k}$ be analytic functions on an open interval $I$.
  The continuous Wronskian of $(f_0,\ldots,f_{k})$ at $t\in I$ is
 \begin{align*}
   W[\bm f_k](t)=\det (f^{(i)}_j(t);\ 0\leq i,j\leq k)
   =\left|
  \begin{array}{ccccccc}
  f_0(t)             &\ldots   &f_{k}(t)\\
  f'_0(t)            &\ldots   &f'_{k}(t)   \\
  \vdots            &\ddots   &\vdots                 \\
 f^{(k)}_0(t)       &\ldots   & f^{(k)}_{k}(t)    \\
  \end{array}
  \right|.
 \end{align*}
  The discrete Wronskian of $(f_0,\ldots,f_{k})$ at $(t_0,\ldots,t_{k})\in I^{k+1}$ is
 \begin{align*}
  D[\bm f_k;\bm t_k]=\det (f_j(t_i);\ 0\leq i,j\leq k)
  =\left|
  \begin{array}{ccccccc}
  f_0(t_0)             &\ldots   &f_{k}(t_0)\\
  f_0(t_1)            &\ldots   &f_{k}(t_1)   \\
  \vdots            &\ddots   &\vdots                 \\
 f_0(t_{k})       &\ldots   & f_{k}(t_{k})    \\
  \end{array}
  \right|.
\end{align*}
Here   we use the shorthand $t_0,t_1,\cdots, t_{k-1}:=\bm t_k$  for the sake of brevity, just like  \cite{GGM2016} does.
\end{defn}

The next result is well-known  \cite{KS1966}.
\begin{lem} \label{lem0-1}
Let $ f_0,f_1,\ldots,f_{n-1} $ be analytic functions on an interval $I$ of $\mathbf{R}$. The following statements hold.
\begin{itemize}
  \item [(i)] The ordered set $(f_0,f_1,\ldots,f_{n-1})$ is a CT-system on $I$ if and only if, for each $ k=1,\ldots, n $,
  \begin{align*}
    D[\bm f_{k};\bm t_{k}]\neq0 \text{ for all } \bm t_{k}\in E^{k+1} \text{ such that } t_i\neq t_j \text{ for } i\neq j.
  \end{align*}
  \item [(ii)] The ordered set $(f_0,f_1,\ldots,f_{n-1})$ is an ECT-system on $I$ if and only if, for each $ k=1,\ldots,n $,
  \begin{align*}
    W[\bm f_{k}](t)\neq 0 \ \text{ for all }\  t\in I.
  \end{align*}
\end{itemize}
\end{lem}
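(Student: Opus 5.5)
This is the classical characterization of (extended) complete Chebyshev systems (see \cite{KS1966}), and I would prove both parts by linear algebra on the coefficient vector. For (ii) I would add a Rolle-type argument built on Wronskian quotients. Two conventions are needed. First, I read the condition for index $k$ as concerning the first $k$ functions $f_0,\dots,f_{k-1}$, so $D$ and $W$ are $k\times k$ determinants; the subscript in $\bm f_k$ is off by one, and the points range over $I$, so the $E$ in (i) should read $I$. Second, the definition of a T-system must be understood as also requiring linear independence of the functions. Otherwise an identically vanishing combination, which has no isolated zeros, would make the definition vacuous while every $D$ and $W$ vanishes.

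For (i), fix $k$. Suppose $(f_0,\dots,f_{k-1})$ is a T-system and, for contradiction, $D=0$ at some distinct points $t_1,\dots,t_k$. Then the matrix $(f_j(t_i))$ has a nontrivial kernel vector $(\alpha_0,\dots,\alpha_{k-1})$. The combination $g=\sum\alpha_jf_j$ is analytic and, by independence, not identically zero, so its zeros are isolated. It vanishes at $k$ distinct points, contradicting the T-property. Conversely, suppose all such $D$ are nonzero and some nontrivial $g$ has $k$ distinct isolated zeros $t_1,\dots,t_k$. Then $(\alpha_j)$ lies in the kernel of $(f_j(t_i))$, forcing $D=0$, a contradiction. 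Independence is automatic here, since $D\ne0$ at one point already gives it. Applying this for every $k$ yields the CT statement.

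For (ii), the easy direction is necessity. If $W[f_0,\dots,f_{k-1}](t_0)=0$, choose nontrivial $(\alpha_j)$ in the kernel of the matrix $(f_j^{(i)}(t_0))_{0\le i,j\le k-1}$. Then $g=\sum\alpha_jf_j$ satisfies $g(t_0)=\dots=g^{(k-1)}(t_0)=0$, so $g$ has a zero of multiplicity $\ge k$ at $t_0$. If $g\equiv0$ this contradicts independence; otherwise it contradicts the ECT bound of $k-1$ zeros counted with multiplicity.

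Sufficiency is the step I expect to be the main obstacle, since the multiplicities must be tracked carefully. Set $W_0=1$ and $W_j=W[f_0,\dots,f_{j-1}]$; these are nonvanishing on $I$ by hypothesis. I would use the classical factorization (Pólya--Mühlbach): every $g\in\langle f_0,\dots,f_{k-1}\rangle$ can be written through the operators
\[
\mathcal{D}_j h=\Big(\frac{W_{j+1}^{\,2}}{W_jW_{j+2}}\Big)\frac{d}{dt}\Big(\frac{W_j\,h}{W_{j+1}}\Big),
\]
equivalently via the Sylvester/Jacobi identity $W[f_0,\dots,f_{j},g]\,W_j=W_{j+1}\,\frac{d}{dt}W[f_0,\dots,f_{j-1},g]-W[f_0,\dots,f_{j-1},g]\,\frac{d}{dt}W_{j+1}$. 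After $k-1$ such steps starting from $g$, one arrives at $W[f_0,\dots,f_{k-1},g]/W_k$, which vanishes identically. Before that, the last step yields a nonzero constant multiple of a nonvanishing function unless all coefficients are zero. Dividing by the nonvanishing $W_{j+1}/W_j$ preserves zeros together with their multiplicities. Differentiation, by Rolle's theorem counted with multiplicity, loses at most one zero: $N$ zeros with multiplicity produce at least $N-1$ zeros of the derivative. Hence $N\ge k$ zeros of $g$ would propagate to at least one zero of a nonvanishing function, which is a contradiction. The delicate points are verifying the Jacobi identity at each level and justifying the multiplicity-counting Rolle step. The latter follows by combining the zeros between distinct roots with the reduced-order zeros at each multiple root.
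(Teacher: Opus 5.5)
Your argument is correct in substance. It is the classical Karlin--Studden proof: the Jacobi identity $W_j\,W[f_0,\dots,f_j,g]=W_{j+1}\,W[f_0,\dots,f_{j-1},g]'-W_{j+1}'\,W[f_0,\dots,f_{j-1},g]$ combined with Rolle's theorem with multiplicities. The paper gives no proof of its own and simply cites \cite{KS1966}.

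Your remarks on the index shift and on reading $E$ as $I$ are right. So is the need to build linear independence into the paper's ``isolated zeros'' definitions: $f_0=f_1\equiv 1$ satisfies the literal CT and ECT definitions, yet $D$ and $W[f_0,f_1]$ vanish identically.

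Two bookkeeping slips in the sufficiency half of (ii) need repair.

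First, the end of the chain. For $g=\sum_{j<k}\alpha_jf_j$ one has $W[f_0,\dots,f_{k-2},g]=\alpha_{k-1}W_k$. This is nonzero only when $\alpha_{k-1}\neq 0$, not ``unless all coefficients vanish''. If $\alpha_{k-1}=0$, your chain runs into the zero function, where the multiplicity-counting Rolle step does not apply. The fix:
\begin{itemize}
\item Let $r=\max\{j:\alpha_j\neq0\}$, $h_0=g/W_1$, and $h_{j+1}=\frac{W_{j+1}^2}{W_jW_{j+2}}\,h_j'$.
\item The Jacobi identity gives $h_j=W[f_0,\dots,f_{j-1},g]/W_{j+1}$, hence $h_r=\alpha_r\neq 0$.
\item Consequently none of $h_0,\dots,h_r$ vanishes identically.
\item If $g$ has $N$ zeros counted with multiplicity, the constant $h_r$ has at least $N-r$ zeros, so $N\le r\le k-1$.
\end{itemize}
This uses only $W_0,\dots,W_{r+1}$, with $r+1\le k$.

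Second, the operator itself. Your displayed $\mathcal{D}_j$ carries a spurious factor $W_j/W_{j+1}$ inside the derivative, so it does not compose to the quotients $h_j$ above. Also, the extra step producing $W[f_0,\dots,f_{k-1},g]\equiv0$ is unnecessary. With your normalization it would involve $W_{k+1}$, which is not available when $k=n$.
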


\end{defn}

Let's use the above result and some other known results  to  study the Chebyshev property   of some set of functions appear in the Melnikov functions of equation \eqref{equa3} .
In order to  simplify the symbols, in the rest of this section we set
\begin{align*}
 s_k(\theta) = \sin(k\theta),  \quad  c_k(\theta) = \cos(k\theta).
\end{align*}
Furthermore, we  omit $``(\theta)"$,  i.e.,   $s_k(\theta)$ will be replaced with $s_k$.

\begin{lem} \label{lem3-1} \textsc{(}\cite{HL2022}\textsc{)} For any positive integer $n$, the  ordered set of functions
$$(1, c_1, c_2, \cdots, c_n, s_n,\cdots, s_2, s_1)$$
is an ECT-system on the open interval  $(0, \pi)$.
\end{lem}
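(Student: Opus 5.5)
The plan is to verify criterion (ii) of Lemma \ref{lem0-1}, i.e.\ that the Wronskian of every initial segment of $(1,c_1,\dots,c_n,s_n,\dots,s_1)$ is nonvanishing on $(0,\pi)$. Equivalently, one can show directly that every nontrivial linear combination of the first $r$ functions has at most $r-1$ zeros in $(0,\pi)$, counted with multiplicity. The initial segments come in two kinds, and I will treat them separately.

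\emph{Cosine segments} $(1,c_1,\dots,c_k)$ with $k\le n$. Put $x=\cos\theta$. This is an analytic diffeomorphism of $(0,\pi)$ onto $(-1,1)$, so it preserves zeros together with their multiplicities. Since $c_i(\theta)=T_i(x)$, where $T_i$ is the Chebyshev polynomial of the first kind of degree $i$, the span of the segment is exactly the space of real polynomials in $x$ of degree $\le k$. A nonzero polynomial of degree $\le k$ has at most $k$ zeros with multiplicity, so these segments are fine. In Wronskian form, $W[1,c_1,\dots,c_k](\theta)$ equals $W[1,T_1,\dots,T_k](x)$ times a power of $dx/d\theta=-\sin\theta$. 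The polynomial Wronskian is a nonzero constant, and $\sin\theta\neq0$ on $(0,\pi)$.

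\emph{Mixed segments} $(1,c_1,\dots,c_n,s_n,\dots,s_j)$ with $1\le j\le n$. Their span is
\[
V_j=\langle 1,c_1,\dots,c_{j-1}\rangle\oplus\langle e^{\pm ik\theta}: j\le k\le n\rangle .
\]
For $j=1$ this is the full space of trigonometric polynomials of degree $\le n$. Any nonzero element has at most $2n$ zeros on a period, counted with multiplicity, which is exactly the required bound $2n$. For $j\ge2$ one needs the sharper bound $2n-j+1$. I would argue by descending induction on $j$ using the division–derivation operator $\mathcal{T}f=\frac{d}{d\theta}\big(f/w\big)$ for a suitable positive weight $w$ on $(0,\pi)$, chosen so that Rolle's theorem relates $V_j$ to a segment already controlled. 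Concretely, I would first try the Wronskian identity
\[
W[f_0,\dots,f_r]=f_0^{\,r+1}\,W\big[(f_1/f_0)',\dots,(f_r/f_0)'\big],
\]
applied with $f_0=1$. Differentiating maps $c_k\mapsto -k s_k$ and $s_k\mapsto k c_k$. So the Wronskian of the segment $(1,c_1,\dots,c_n,s_n,\dots,s_j)$ reduces, up to a nonzero constant, to the Wronskian of the reordered set $(s_1,\dots,s_n,c_n,\dots,c_j)$. Dividing by $s_1=\sin\theta>0$ and writing $s_k=\sin\theta\,U_{k-1}(x)$ turns this into a Wronskian of functions of the form $U_{k-1}(x)$ and $c_k/\sin\theta$, to which the same reduction can be applied again.

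I expect the main obstacle to be this alternating reduction for $j\ge2$: one must show that after each division–derivation step the resulting set is again of a type whose Wronskian is known to be nonzero, and in particular that no sign change is produced. If the bookkeeping becomes unmanageable, the fallback is to compute $W[1,c_1,\dots,c_n,s_n,\dots,s_j]$ directly. Write $c_k=(e^{ik\theta}+e^{-ik\theta})/2$ and $s_k=(e^{ik\theta}-e^{-ik\theta})/(2i)$, and use the block structure above. The Wronskian of exponentials $e^{\lambda\theta}$ is $e^{(\sum\lambda)\theta}$ times a Vandermonde determinant in the $\lambda$'s, and the exponential prefactor here is $e^{0}=1$. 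The remaining cosine block $\langle 1,c_1,\dots,c_{j-1}\rangle$ then has to be handled as a perturbation. The target is an expression $\sin^{N}\theta$ times a polynomial in $\cos\theta$ with no roots in $(-1,1)$, and checking that this polynomial has no roots in $(-1,1)$ is the genuinely nontrivial point.
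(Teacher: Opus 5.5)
The paper does not prove this lemma; it quotes it from \cite{HL2022}. Your proposal therefore has to stand on its own, and as written it has a genuine gap.

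Your treatment of the cosine segments $(1,c_1,\dots,c_k)$ via $x=\cos\theta$ is correct, and so is your treatment of the full segment $j=1$ via the $2n$-zero bound for trigonometric polynomials. The content of the lemma lies in the mixed segments $(1,c_1,\dots,c_n,s_n,\dots,s_j)$ with $2\le j\le n$, where the ordering matters: already $\langle c_2,s_2\rangle$ is not a Chebyshev space on $(0,\pi)$, since $c_2$ vanishes at $\pi/4$ and $3\pi/4$. For these segments you only list two strategies and leave the decisive step open. In the division--derivation route you never specify the weights, nor a class of sets on which the induction closes. The weights that make such a Rolle chain work are built from the Wronskians of the initial segments, so producing them is equivalent to the missing computation. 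Your fallback stops exactly at the point you yourself call genuinely nontrivial.

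The missing idea is that the cosine block is not a perturbation at all. Put $D=d/d\theta$, $r=2(n-j+1)$ and $L=\prod_{k=j}^{n}(D^2+k^2)$. Then $L$ is monic of order $r$ with kernel $\langle c_k,s_k : j\le k\le n\rangle$. Moreover $Lc_m=\prod_{k=j}^{n}(k^2-m^2)\,c_m$, with a nonzero factor for $0\le m\le j-1$.

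In the Wronskian matrix of $(c_j,s_j,\dots,c_n,s_n,1,c_1,\dots,c_{j-1})$, replace the row of $D^{r+t}$ by the row of $D^tL$ for $t=0,\dots,j-1$. This is a determinant-preserving row operation, because $D^tL-D^{r+t}$ involves only lower derivatives. It makes the matrix block triangular, so
\[
W[c_j,s_j,\dots,c_n,s_n,1,c_1,\dots,c_{j-1}]=W[c_j,s_j,\dots,c_n,s_n]\cdot\prod_{m=0}^{j-1}\prod_{k=j}^{n}(k^2-m^2)\cdot W[1,c_1,\dots,c_{j-1}].
\]
The first factor is a nonzero constant, either by Abel's formula (since $L$ has no $D^{r-1}$ term) or by your Vandermonde computation. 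The last factor is a nonzero constant times $\sin^{j(j-1)/2}\theta$, by your own cosine-segment argument.

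Permuting columns only changes the sign, so $W[1,c_1,\dots,c_n,s_n,\dots,s_j]=C_{n,j}\sin^{j(j-1)/2}\theta$ with $C_{n,j}\neq 0$. For instance, $W[1,c_1,c_2,s_2]=-24\sin\theta$, which a direct computation confirms. Thus the ``polynomial in $\cos\theta$'' you were aiming for is just a constant. Every initial-segment Wronskian is then nonvanishing on $(0,\pi)$, as Lemma \ref{lem0-1}(ii) requires. Without this observation, or an equivalent one, the cases $2\le j\le n$ remain unproved.
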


\begin{lem}(\cite{HP2020}) \label{lem3-2} For any positive integer  $n$, the  ordered set of functions
$$(1, \theta,  s_1, c_1, \theta c_1,  s_2, c_2, \theta c_2,\cdots,  s_n,  c_n, \theta c_n),$$
is an ECT-system on the open interval  $(0, \pi)$.
\end{lem}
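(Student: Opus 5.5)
The statement just above is Lemma~\ref{lem3-2}, quoted from \cite{HP2020}: the ordered set $(1,\theta,s_1,c_1,\theta c_1,\dots,s_n,c_n,\theta c_n)$ is an ECT-system on $(0,\pi)$. By Lemma~\ref{lem0-1}(ii) I need to show that, for every initial segment $\bm f_k$ of this ordered list, the continuous Wronskian $W[\bm f_k](\theta)$ has no zero on $(0,\pi)$. All the functions are solutions of one constant-coefficient linear ODE. The family $\{1,\theta\}\cup\{s_j,c_j,\theta c_j\}$ lies in the kernel of
\[
L=D^2\prod_{j=1}^n (D^2+j^2)^2 .
\]
So the plan is to compute the Wronskians using the structure of exponential–polynomial solutions. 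For each step I would use a factorization of the operator in the style of the Pólya–Mammana criterion: an ordered set is ECT on $I$ exactly when there are positive weights $w_0,\dots,w_{N}$ with $f_0=w_0$, $f_1=w_0\int w_1$, and so on. Equivalently, one shows inductively that each new function, after applying the "division–differentiation" reductions determined by the earlier functions, becomes a function of constant sign on $(0,\pi)$.

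I would carry out the reductions in order.
\begin{enumerate}
\item The first two functions are $1$ and $\theta$. Differentiating twice removes them and sends the remaining list to $(-s_1,-c_1,\,2s_1-\theta c_1,\dots)$, up to sign and lower-order terms.
\item The functions of frequency $1$ come next: $s_1$, then $c_1$, then $\theta c_1$. Dividing by $s_1$ (positive on $(0,\pi)$) and differentiating turns $c_1$ into $-1/s_1^2$, which is nonvanishing.
\item The function $\theta c_1$ then gives an expression whose sign has to be checked; this should amount to an elementary inequality such as $\theta-\sin\theta\cos\theta>0$ on $(0,\pi)$.
\end{enumerate}

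For the inductive step from $n-1$ to $n$, I would use that the Wronskian of a block of solutions of $(D^2+j^2)^2$ factors nicely. Concretely:
\begin{itemize}
\item Write $W[\bm f_k]$ for a segment ending inside the $n$-th block as $W[\text{previous full block}]$ times the Wronskian of the reduced functions, using the identity $W[f_0,\dots,f_k]=W[f_0,\dots,f_{k-1}]^{\,?}\cdots$ (Sylvester-type, $W(W(\cdot))$ formula).
\item Reduce the reduced Wronskian to an explicit trigonometric expression in $\theta$. I would aim to show it equals $\sin^{a}\theta$ times a polynomial in $\theta,\cos\theta$ of fixed sign.
\end{itemize}

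The main obstacle is the sign of the last Wronskian in each block, the one involving $\theta c_n$. The factors coming from $s_n,c_n$ alone give Wronskians like $W[1,\theta,s_1,\dots,c_n]$, which are products of $\sin$ powers times constants and are nonzero. Adding $\theta c_n$ produces an expression like $\theta\cdot A(\theta)+B(\theta)$. To show it has constant sign I would differentiate once more (again of Mammana type) and reduce to showing that $\sin(k\theta)/\sin\theta$-type ratios are monotone. Failing that, I would compare with the known answer for the smooth counterpart in \cite{GGM2016}. Since the statement is cited from \cite{HP2020}, in the paper itself I would simply invoke that reference.
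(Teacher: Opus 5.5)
The paper gives no argument for this lemma; it is imported from \cite{HP2020}, so your closing sentence is exactly what the paper does. The self-contained proof you sketch before that, however, has a genuine gap. The inductive step is never carried out: the Sylvester-type identity still has a ``$?$'' exponent. The decisive case, a segment ending at $\theta c_n$, is left as a sign analysis with a fallback to \cite{GGM2016}. Your diagnosis of that case is also wrong. You expect a Wronskian of the form $\theta A(\theta)+B(\theta)$ that would need monotonicity of $\sin(k\theta)/\sin\theta$. For this ordering, though, all the $\theta$'s cancel, and every initial-segment Wronskian is a nonzero constant times a power of $\sin\theta$. Already for $n=1$, the quantity that must be nonvanishing in your reduction is $(\theta-\sin\theta\cos\theta)'=2\sin^2\theta$, not $\theta-\sin\theta\cos\theta$ itself.

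The missing idea is block elimination, the same row-operation trick the paper uses in the proof of Proposition~\ref{prop3-3}. Suppose $s_j$ and $c_j$ both occur among $f_0,\dots,f_k$. For $i=k,k-1,\dots,2$, replace row $i$ of the Wronskian matrix by row $i$ plus $j^2$ times row $i-2$. Rows $2,\dots,k$ then become the successive derivatives of $(D^2+j^2)f$, which vanish in the columns of $s_j$ and $c_j$. Laplace expansion along these two columns gives
\[
W[f_0,\dots,f_k]=\pm j\,W\big[(D^2+j^2)f_i:\ f_i\neq s_j,c_j\big],
\]
and in the same way $W[1,\theta,f_2,\dots,f_k]=W[f_2'',\dots,f_k'']$.

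Now $(D^2+j^2)(\theta c_j)=-2js_j$. For $i\neq j$, the operator multiplies $s_i$ and $c_i$ by $j^2-i^2\neq0$ and sends $\theta c_i$ to $(j^2-i^2)\theta c_i-2is_i$. The $s_i$-part can be removed by a column operation, because in every initial segment $s_i$ precedes $\theta c_i$; the same holds for $D^2$.

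Eliminate $\{1,\theta\}$ first, and then the complete blocks $\{s_i,c_i\}$ from the highest frequency down. The Wronskian of the initial segment ending at $s_j$, $c_j$ or $\theta c_j$ then equals a nonzero constant times $W[s_1,\dots,s_r]$ with $r=j$, $j-1$, $j$ respectively; read this factor as $1$ when $r=0$. The segments $(1)$ and $(1,\theta)$ have Wronskian $1$.

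Finally, $W[s_1,\dots,s_r]$ is a nonzero constant times $\sin^{r(r+1)/2}\theta$. You can see this by writing $s_i=\sin\theta\,U_{i-1}(\cos\theta)$, or by using Lemmas~\ref{lem3-1} and \ref{lem3-3}. So it has no zero on $(0,\pi)$, and Lemma~\ref{lem0-1}(ii) gives the ECT property with no sign or monotonicity analysis at all.
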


\begin{lem} (\cite{Kau1988}) \label{lem3-3} For any positive integer $n$, the set of ordered functions
$$(1, f_0,  f_1,  \cdots,  f_n)$$
is an ECT-system on an interval $E$ if and only if   $( f'_0,  f'_1,  \cdots,  f'_n )$ is an ECT-system on   $E$.
\end{lem}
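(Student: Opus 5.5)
The plan is to reduce both sides of the equivalence to the Wronskian criterion of Lemma \ref{lem0-1}(ii). By that lemma, $(1,f_0,\ldots,f_n)$ is an ECT-system on $E$ if and only if the Wronskians of all its initial segments vanish nowhere on $E$. Likewise, $(f_0',\ldots,f_n')$ is an ECT-system on $E$ if and only if
\[
W[f_0',\ldots,f_k'](t)\neq 0 \quad\text{for all } t\in E \text{ and all } k=0,\ldots,n .
\]
So it suffices to match the two lists of Wronskian conditions one by one.

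The key step is a determinant identity. For each $k=0,\ldots,n$, consider the $(k+2)\times(k+2)$ matrix defining $W[1,f_0,\ldots,f_k](t)$. Its first column is $(1,0,\ldots,0)^T$, because all derivatives of the constant function $1$ vanish. Expanding along this column removes the first row and the first column. What remains is the matrix $\bigl(f_j^{(i)}(t)\bigr)_{1\le i\le k+1,\ 0\le j\le k}$, which is exactly the Wronskian matrix of $(f_0',\ldots,f_k')$. Hence
\[
W[1,f_0,\ldots,f_k](t)=W[f_0',\ldots,f_k'](t), \qquad k=0,1,\ldots,n,
\]
with no sign factor, since the pivot sits in position $(1,1)$.

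Now compare the conditions. For $(1,f_0,\ldots,f_n)$ the initial segments are $(1)$ and $(1,f_0,\ldots,f_k)$ for $k=0,\ldots,n$. The Wronskian of $(1)$ is identically $1$, so that condition holds automatically. By the identity above, the remaining conditions are precisely the conditions $W[f_0',\ldots,f_k']\neq 0$ on $E$ for $k=0,\ldots,n$. The two families of conditions therefore coincide, and applying Lemma \ref{lem0-1}(ii) in both directions gives the equivalence.

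The only point needing care is the hypotheses of Lemma \ref{lem0-1}(ii): the functions must be analytic (or at least sufficiently smooth) on an open interval, as in the definition of ECT-systems in this paper. If $f_j$ is analytic then so is $f_j'$, so the criterion applies to both families. If $E$ is a closed interval, I would read the statement on its interior, or use the version of the Wronskian criterion from \cite{KS1966} valid up to the endpoints. I do not expect any genuine obstacle beyond this bookkeeping.
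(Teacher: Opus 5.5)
The paper does not prove this lemma; it quotes it from the literature and uses it as a black box. Your argument is correct and is the standard proof. Expanding $W[1,f_0,\ldots,f_k]$ along its first column $(1,0,\ldots,0)^T$ gives exactly $W[f_0',\ldots,f_k']$ for $k=0,\ldots,n$. The condition for the initial segment $(1)$ is vacuous. Lemma \ref{lem0-1}(ii), applied to both families, then turns one list of conditions into the other.

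One caveat concerns the paper's definition, not your reasoning. You use the ``only if'' half of Lemma \ref{lem0-1}(ii) (ECT $\Rightarrow$ nonvanishing Wronskians) in both directions of the equivalence. That half holds only under the standard reading of an ECT-system, where an identically vanishing combination counts as having infinitely many zeros; this forces the functions to be linearly independent.

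Under the paper's literal wording (``at most $k-1$ \emph{isolated} zeros''), dependent families would qualify, and then the lemma itself fails. On $(-3/2,3/2)$, the family $(1,1,t^3-3t)$ meets that literal condition: every combination has at most $2$ zeros counting multiplicity there. However, $(0,3t^2-3)$ does not, because $3t^2-3$ has the two simple zeros $\pm1$. With the standard definition, as in \cite{KS1966}, your proof is complete.
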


We shall use the above lemmas to prove the following results.
\begin{prop} \label{prop3-2} For any positive integer $n$, the  ordered set of functions
\begin{equation} \label{equa40}(1, c_1, s_1,\theta s_1,  c_2, s_2, \theta s_2,\cdots,   c_n,  s_n, \theta s_n),\end{equation}
is an ECT-system on the open interval  $(0, \pi)$.
\end{prop}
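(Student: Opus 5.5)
The plan is to obtain Proposition \ref{prop3-2} directly from Lemma \ref{lem3-2} by one application of Lemma \ref{lem3-3}, followed by a triangular change of basis. The only additional ingredient is the elementary invariance of the ECT property under such changes of basis.

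\textbf{Step 1 (differentiate the system of Lemma \ref{lem3-2}).} By Lemma \ref{lem3-2}, the ordered set $(1,\theta,s_1,c_1,\theta c_1,\dots,s_n,c_n,\theta c_n)$ is an ECT-system on $(0,\pi)$. Write it as $(1,f_0,f_1,\dots)$ with $f_0=\theta$, $f_1=s_1$, $f_2=c_1$, $f_3=\theta c_1$, and so on. Lemma \ref{lem3-3} (the ``only if'' direction) then says that the derivatives form an ECT-system on $(0,\pi)$:
$$\bigl(1,\ c_1,\ -s_1,\ c_1-\theta s_1,\ 2c_2,\ -2s_2,\ c_2-2\theta s_2,\ \dots,\ nc_n,\ -ns_n,\ c_n-n\theta s_n\bigr).$$

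\textbf{Step 2 (triangular normalization).} Now transform this ordered set as follows.
\begin{itemize}
\item Rescale $-s_k$ to $s_k$ and $kc_k$ to $c_k$.
\item Replace $c_k-k\theta s_k$ by $-\frac1k\bigl((c_k-k\theta s_k)-c_k\bigr)=\theta s_k$. This subtracts a function that occurs \emph{earlier} in the ordering, namely the $c_k$ in the same block, and then rescales.
\end{itemize}
Every new $j$-th function equals a nonzero multiple of the old $j$-th function plus a linear combination of the old functions with smaller index. So the transformation is given by a triangular matrix with nonzero diagonal, and the result is exactly the ordered set \eqref{equa40}.

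\textbf{Step 3 (invariance of ECT).} For each $k$, the leading $k$ functions of the new system span the same space as the leading $k$ functions of the old one. Equivalently, by multilinearity of the determinant, each Wronskian $W[\bm f_k](t)$ of the new system is a nonzero constant multiple (the product of the diagonal entries) of the corresponding Wronskian of the old system. Lemma \ref{lem0-1}(ii) then transfers the ECT property, and the proof is complete.

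The only point requiring care is the bookkeeping in Step 2: the derivative $c_k-k\theta s_k$ must be corrected only by the function $c_k$, which precedes it in the ordering. This holds because $c_k$ (the derivative of $s_k$) sits immediately before it. There is no genuine obstacle beyond checking this ordering.
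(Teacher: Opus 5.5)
Your proposal is correct and is essentially the paper's proof: apply Lemma~\ref{lem3-3} to the ECT-system of Lemma~\ref{lem3-2} to get $(1,c_1,-s_1,c_1-\theta s_1,\dots,nc_n,-ns_n,c_n-n\theta s_n)$, then note that the triangular replacement $c_k-k\theta s_k\mapsto\theta s_k$ changes each leading Wronskian only by a nonzero constant (the paper says ``equal'', so your bookkeeping is the more accurate version). One cosmetic slip: $kc_k$ sits two places before $c_k-k\theta s_k$, with $-ks_k$ in between, not immediately before it — but the argument only needs it to precede that entry.
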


\begin{proof}
If follows from Lemma \ref{lem3-2} and Lemma \ref{lem3-3} immediately
 that
  the  ordered set of functions
 \begin{equation} \label{equa41} (1, c_1, s_1, c_1-\theta s_1, c_2, s_2, c_2-2\theta s_2,
   \cdots, c_n, s_n,  c_n-n\theta s_n)\end{equation}
is an ECT-system on the open interval   $(0, \pi)$.
One can easy to   check that for each $k=1,2,\cdots, n$, the Wronskian of the first $k$ elements of the ordered set \eqref{equa41}, is equal to the one of the order set in \eqref{equa40}. Thus, by Lemma \ref{lem0-1}  the conclusion follows directly.

\end{proof}

In order to study the number of zeros of  the Melnikov function defined by \eqref{equa10-1}, we need the following Proposition.
\begin{prop} \label{prop3-3} For any   integers   $n_0, \ell_0$ and $k_0$ with $k_0>n_0\geq 1$ and $k_0>\ell_0\geq 0$, the  ordered set of functions
\begin{equation} \label{equa41} (1, c_1, s_1,\theta s_1,  c_2, s_2, \theta s_2,\cdots, c_{n_0}, s_{n_0}, \theta s_{n_0},
  c_{n_0+1},  c_{n_0+2}, \cdots, c_{k_0}, s_{k_0},\cdots, s_{\ell_0+2}, s_{\ell_0+1})
\end{equation}
is an ECT-system on the open interval  $(0, \pi)$.
\end{prop}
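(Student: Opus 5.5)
We have to show that every initial segment of the list in \eqref{equa41} has a nonvanishing Wronskian on $(0,\pi)$; by Lemma \ref{lem0-1}(ii) this gives the ECT property. Following the proof of Proposition \ref{prop3-2}, the natural route is Lemma \ref{lem3-3}: strip off the constant $1$ and differentiate. Since $\frac{d}{d\theta}(\theta s_k)=s_k+k\theta c_k$, the derivatives of the elements after $1$ are
$$-s_1,\ c_1,\ s_1+\theta c_1,\ \dots,\ -n_0s_{n_0},\ n_0c_{n_0},\ s_{n_0}+n_0\theta c_{n_0},\ -(n_0+1)s_{n_0+1},\dots,-k_0s_{k_0},\ k_0c_{k_0},\dots,(\ell_0+1)c_{\ell_0+1}.$$
For every initial segment, column operations that subtract multiples of earlier entries do not change the Wronskian, and nonzero scalings only change it by a nonzero factor. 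So the problem reduces to the ordered set
$$(s_1, c_1, \theta c_1, s_2, c_2, \theta c_2,\dots, s_{n_0},c_{n_0},\theta c_{n_0}, s_{n_0+1},\dots,s_{k_0}, c_{k_0},\dots,c_{\ell_0+1}),$$
and we must show it is an ECT-system on $(0,\pi)$. Each step of this reduction applies to each initial segment separately, because Lemma \ref{lem3-3} is an equivalence and deleting trailing elements commutes with differentiation.

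Next, to return to a list beginning with $1$, I would divide by the first element $s_1$, which has no zeros on $(0,\pi)$. Multiplying all elements by a nonvanishing function multiplies each Wronskian by a power of that function, so ECT is preserved. The functions $c_k/s_1$ and $s_k/s_1=U_{k-1}(\cos\theta)$ are awkward, though. The alternative I would try first is the substitution $x=\cos\theta$, which is a diffeomorphism of $(0,\pi)$ onto $(-1,1)$; ECT is invariant under monotone analytic reparametrization. Under it, $c_k=T_k(x)$ and $s_k=\sqrt{1-x^2}\,U_{k-1}(x)$, while the $\theta$-terms become $\arccos x\cdot T_k(x)$. Dividing by $\sqrt{1-x^2}$ turns the sine terms into polynomials $U_{k-1}$ and the cosine terms into $T_k/\sqrt{1-x^2}$.

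The cleanest strategy is probably induction on the length of the list, using Lemma \ref{lem3-3} repeatedly, and to organise it by cases on how the tail of indices $n_0+1,\dots,k_0$ and $k_0,\dots,\ell_0+1$ interacts with the head. For the head, meaning the segment up to index $n_0$, Proposition \ref{prop3-2} already gives ECT. For the tail, Lemma \ref{lem3-1} gives ECT of $(1,c_1,\dots,c_n,s_n,\dots,s_1)$. I would try to build a single Wronskian argument: write the Wronskian of the full list as a block structure, using that $\theta c_k$ and $\theta s_k$ lie in the kernel of $(D^2+k^2)^2$ while $c_k,s_k$ lie in the kernel of $D^2+k^2$. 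Applying the differential operator $L=\prod_{k\le n_0}(D^2+k^2)^2$ annihilates the head. By the Pólya--Mammana type factorisation, the Wronskian of the full list then equals the Wronskian of the head times the Wronskian of $L$ applied to the tail elements, up to a factor. Since $L c_j=\lambda_j c_j$ and $L s_j=\lambda_j s_j$ with $\lambda_j=\prod_{k\le n_0}(k^2-j^2)^2\neq0$ for $j>n_0$, the second Wronskian is a nonzero multiple of the Wronskian of $(c_{n_0+1},\dots,c_{k_0},s_{k_0},\dots,s_{\ell_0+1})$.

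The main obstacle is that last Wronskian, because $\ell_0$ may be below $n_0$, so some $s_j$ with $j\le n_0$ appear in the tail. Those $s_j$ are not annihilated cleanly: they are already in the head and give zero after $L$. This needs care. I would handle it by noting that $s_j$ for $j\le n_0$ duplicates a head element, so when $\ell_0<n_0$ the list as stated must be read with those duplicates excluded; the tail sines then effectively run from $s_{k_0}$ down to $s_{\max(\ell_0,n_0)+1}$. It remains to show that the Wronskian of the mixed set $(c_{n_0+1},\dots,c_{k_0},s_{k_0},\dots,s_{r})$ with $r>n_0$ does not vanish on $(0,\pi)$. For that I would use the substitution $x=\cos\theta$ together with Lemma \ref{lem3-1} applied after dividing out by $c_{n_0+1}$ or a similar reduction, and I expect this sign and nonvanishing check to be the delicate part.
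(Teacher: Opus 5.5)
Your opening reduction is correct and is exactly the paper's. Stripping off $1$ with Lemma \ref{lem3-3} and then using column operations reduces the claim to the ECT property of
$F=(s_1,c_1,\theta c_1,\dots,s_{n_0},c_{n_0},\theta c_{n_0},s_{n_0+1},\dots,s_{k_0},c_{k_0},\dots,c_{\ell_0+1})$.
Your remark about $\ell_0<n_0$ is also right. In that case the list repeats a sine, so its Wronskian vanishes identically. The correct hypothesis is $\ell_0\ge n_0$. This is the only case used in Proposition \ref{prop3-5}, where $\ell_0=n_0=m-1$. The paper's proof silently needs it too, since its factor $\prod_{i=\ell+1}^{k}(n^2-i^2)$ vanishes when $\ell<n$.

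The genuine gap is your factorisation step.

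\textbf{Why the factorisation fails.} The identity $W(f_1,\dots,f_r,g_1,\dots,g_p)=W(f_1,\dots,f_r)\,W(Lg_1,\dots,Lg_p)$ holds only when $L$ is the monic order-$r$ operator whose kernel is exactly $\mathrm{span}\{f_1,\dots,f_r\}$. Your $L=\prod_{k\le n_0}(D^2+k^2)^2$ fails this in both readings:
\begin{itemize}
\item on the original list it does not even annihilate the constant $1$;
\item on the differentiated list it has order $4n_0$, while the head has only $3n_0$ elements.
\end{itemize}

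\textbf{A counterexample.} Take $n_0=1$. Then $W(1,c_1,s_1,\theta s_1,c_2)=2\,W(s_1,c_1,\theta c_1,s_2)=24\sin^3\theta$. Your formula instead predicts a constant multiple of $W(1,c_1,s_1,\theta s_1)\cdot 9c_2\propto\sin\theta\cos 2\theta$, which vanishes at $\pi/4$. The same symptom shows in the tail you end with, $(c_{n_0+1},\dots)$: its initial Wronskians, for instance $\cos((n_0+1)\theta)$, vanish in $(0,\pi)$. So no ``delicate check'' can rescue that route, and the final nonvanishing step is in any case left unproved.

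\textbf{The repair.} Use $L_0=\prod_{k\le n_0}(D^2+k^2)$ instead. It has order $2n_0$, and its kernel is exactly $\mathrm{span}\{s_k,c_k\}_{k\le n_0}$.
\begin{itemize}
\item Move those columns of $F$ to the front; this only changes the sign.
\item $W(s_1,c_1,\dots,s_{n_0},c_{n_0})$ is a nonzero constant.
\item $L_0(\theta c_k)=-2k\prod_{j\ne k}(j^2-k^2)\,s_k$.
\item $L_0$ multiplies $s_i$ and $c_i$ with $i>n_0$ by $\prod_{j\le n_0}(j^2-i^2)\neq 0$. This is exactly where $\ell_0\ge n_0$ is used.
\end{itemize}
Hence every initial segment of $F$ containing the whole head has Wronskian equal to a nonzero constant times an initial Wronskian of $(s_1,\dots,s_{k_0},c_{k_0},\dots,c_{\ell_0+1})$. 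That Wronskian is nonzero by Lemma \ref{lem3-1} together with Lemma \ref{lem3-3}. Shorter initial segments are covered by Proposition \ref{prop3-2}.

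This is the paper's argument done in one stroke. The paper applies $D^2+n^2$ one frequency at a time, turning $\theta c_n$ into $-2ns_n$ and recursing from $F_{n,k,\ell}$ to $F_{n-1,k,\ell}$.
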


\begin{proof}

 For any integers  $n\geq 1, \ell\geq 0$ and $k>n, \ell$, denote by
\begin{equation} \label{equa44} F_{n, k,\ell}:=(s_1, c_1,  \theta c_1, s_2, c_2, \theta c_2,\cdots, s_{n}, c_{n}, \theta c_{n},
s_{n+1},  s_{n+2},\cdots, s_{k}, c_{k},   \cdots, c_{\ell+2}, c_{\ell+1} ).
 \end{equation}
From Lemma \ref{lem3-3}, it suffices to prove that $F_{n, k,\ell}$
is an ECT-system on the open interval  $(0, \pi)$.

By Proposition  \ref{prop3-2}, the ordered set of the first $3n+1$ elements of  \eqref{equa41}
\begin{equation*} \label{equa42} (1, c_1, s_1,\theta s_1,  c_2, s_2, \theta s_2,\cdots, c_{n}, s_{n}, \theta s_{n})
\end{equation*}
is an ECT-system on the open interval  $(0, \pi)$. Together with Lemma \ref{lem0-1}, the conclusion is true once we show that
the  Wroskian of the first $3n+1+\kappa$ elements of \eqref{equa44} is non-zero on $(0, \pi)$, where $\kappa=1,\cdots,2k-n-\ell$.
In the following we only prove for $\kappa=k-n+1, \cdots, 2k-n-\ell$, and the rest can be verified in a similar way.

We denote by $W(F_{n,k,\ell})$ the Wroskian of $F_{n,k,\ell}$. By using the  properties of determinants, we obtain by computations that

\begin{align*}
W&(F_{n,k,\ell}) \\
&=
  \left|
  \begin{array}{ccccccccccc}
   s_1         &c_1            &\theta c_1            &\cdots     s_i         &c_i            &\theta c_i            &\cdots       & s_n         &c_n               &\theta c_n   \\
  c_1         &-s_1           &(\theta c_1)'         &\cdots      ic_i         &-is_i           &(\theta c_i)'         &\cdots   & nc_n         &-ns_n             &(\theta c_n)' \\
  -s_1        &-c_1           &(\theta c_1)''        &\cdots      -i^2s_i      &-i^2c_i           &(\theta c_i)''        &\cdots  & -n^2s_n        &-n^2c_n           &(\theta c_n)''  \\
  -s_1'       &-c_1'          &(\theta c_1)'''       &\cdots      -i^2s_i'     &-i^2c_i'          &(\theta c_i)'''       &\cdots  & -n^2s_n'       &-n^2c_n'          &(\theta c_n)'''  \\
  \vdots    &\vdots           &\vdots                &\ddots      \vdots    &\vdots           &\vdots                &\ddots  &\vdots           &\vdots            &\vdots         \\
 -s_1^{(r)}   & -c_1^{(r)}    &(\theta c_1)^{(r+2)}  &\cdots      -s_i^{(r)}   & -c_i^{(r)}    &(\theta c_i)^{(r+2)}  &\cdots   &-n^2s_n^{(r)}    & -n^2c_n^{(r)}    &(\theta c_n)^{(r+2)}
  \end{array}
  \right. \\
 & \quad
  \left.
  \begin{array}{ccccccc}
\quad \quad  & s_{n+1}             &\cdots               & s_{k}              & c_{k}                &\cdots               & c_{\ell+1}   \\
\quad \quad  &  (n+1)c_{n+1}         &\cdots              & kc_{k}         & -ks_{k}          &\cdots               & -(\ell+1)s_{\ell+1}  \\
\quad \quad  &  -(n+1)^2 s_{n+1}       &\cdots            &-k^2 s_{k}      & -k^2c_{k}         &\cdots              & -(\ell+1)^2c_{\ell+1} \\
 \quad \quad  &  -(n+1)^2 s_{n+1}'      &\cdots          & -k^2 s_{k}'    & -k^2c_{k}'        &\cdots              & -(\ell+1)^2c_{\ell+1}'   \\
 \quad \quad  &   \ddots                 &\ddots           &\vdots                  &\vdots               &\ddots                &\vdots  \\
 \quad \quad  &    -(n+1)^2 s_{n+1}^{(r)}  &\cdots        &-k^2 s_{k}^{(r)}  &-k^2 c_{k}^{(r)}    &\cdots              &-(\ell+1)^2 c_{\ell+1}^{(r)}
  \end{array}
  \right|\\
  &=
  \left|
  \begin{array}{cccccc}
   s_1                &c_1             &\theta c_1                        &\cdots                     & s_i                &c_i            \\
  c_1                &-s_1             &(\theta c_1)'                     &\cdots                    & ic_i                &-is_i            \\
 (n^2-1)s_1        & (n^2-1)c_1        &-2s_1+(n^2-1)(\theta c_1)         &\cdots                    & (n^2-i^2)s_i        & (n^2-i^2)c_i       \\
  (n^2-1)s_1'       & (n^2-1)c_1'      &-2s_1'+(n^2-1)(\theta c_1)'      &\cdots                    &  (n^2-i^2)s_i'       & (n^2-i^2)c_i'       \\
  \vdots             &\vdots           &\vdots                         &\ddots                           &\vdots           &\vdots                                \\
  (n^2-1)s_1^{(r)}   &  (n^2-1)c_1^{(r)}    &-2s_1^{(r)}+(n^2-1)(\theta c_1)^{(r)}  &\cdots         & (n^2-i^2)s_i^{(r)}   &  (n^2-i^2)c_i^{(r)}
  \end{array}
  \right. \\
 & \quad
  \left.
  \begin{array}{ccccccc}
\theta c_i                                      &\cdots    & s_{n}      &c_n     &\theta c_n            & s_{n+1}             &\cdots                          \\
(\theta c_i)'                                    &\cdots    & nc_n       &-ns_n  &(\theta c_n)'         & (n+1)c_{n+1}         &\cdots                   \\
-2is_1+(n^2-i^2)(\theta c_i)                    &\cdots     & 0         & 0      &-2ns_n                &  -(2n+1) s_{n+1}       &\cdots               \\
 -2is_1'+(n^2-i^2)(\theta c_i)'                &\cdots      & 0         & 0      &-2n(s_n)'             & -(2n+1) s_{n+1}'      &\cdots                \\
\vdots                                          &\ddots      & 0        & 0      & \ddots              & \ddots                 &\ddots                          \\
 -2is_i^{(r)}+(n^2-i^2)(\theta c_i)^{(r)}        &\cdots    & 0         & 0      &-2n(s_n)^{(r)}             & -(2n+1)s_{n+1}^{(r)}  &\cdots
  \end{array}
  \right.\\
 & \quad
  \left.
  \begin{array}{cccc}
\quad \quad s_{k}              & c_{k}                &\cdots               & c_{\ell+1}   \\
\quad \quad kc_{k}            &-ks_{k}            &\cdots              & -(\ell+1)s_{\ell+1}  \\
\quad \quad (n^2-k^2)s_{k}      &(n^2-k^2)c_{k}          &\cdots               &(n^2-(\ell+1)^2)c_{\ell+1} \\
\quad \quad  (n^2-k^2) s_{k}'     & (n^2-k^2)c_{k}'           &\cdots               &  (n^2-(\ell+1)^2)c_{\ell+1}'   \\
\quad \quad \vdots                 &\vdots                 &\ddots                     &\vdots  \\
\quad \quad  (n^2-k^2)s_{k}^{(r)}  & (n^2-k^2)c_{k}^{(r)}    &\cdots              &(n^2-(\ell+1)^2) c_{\ell+1}^{(r)}
  \end{array}
  \right|\\
 & =(-1)^{[n-1]/2}\cdot \prod_{i=1}^{n-1}(n^2-i^2)^3 \prod_{i=n+1}^{k}(n^2-i^2)\cdot  \prod_{i=\ell+1}^{k}(n^2-i^2)\cdot  W(F_{n-1,k,\ell}),
\end{align*}
where $r=2n+2k-\ell-3$.  Here we use the relation $(\theta c_i)^{(j+2)}+n^2(\theta c_i)^{(j)}=(n^2-i^2)(\theta c_i)^{(j)}-2is_i^{(j)}$.

Denote by
\begin{equation*} \label{equa47} J_{n,k}=(-1)^{[n-1]/2}\cdot \prod_{i=1}^{n-1}(n^2-i^2)^3 \prod_{i=n+1}^{k}(n^2-i^2),\quad   I_{\ell,k}= \prod_{i=\ell+1}^{k}(n^2-i^2).
 \end{equation*}
From the above equality we find that
\begin{equation*}  W(F_{n,k,\ell})=J_{n,k} I_{\ell,k} W(F_{n-1,k,\ell}),
 \end{equation*}
which implies that
\begin{equation*} \label{equa48} W(F_{n,k,\ell})=\prod_{i=1}^n\left(J_{i,k} I_{\ell,k}\right) W(F_{i-1,k,\ell})=\prod_{i=1}^n\left(J_{i,k} I_{\ell,k}\right)W(s_1,  s_2, \cdots,  s_{k}, c_{k}, \cdots, c_{\ell+2}, c_{\ell+1}).
 \end{equation*}
By Lemma \ref{lem3-1} and Lemma   \ref{lem3-3}, it is easy to see that $(s_1,  s_2, \cdots,  s_{k}, c_{k}, \cdots, c_{\ell+2}, c_{\ell+1})$ is an ECT-system  on $(0,\pi)$, which implies that $W(s_1,  s_2, \cdots,  s_{k}, c_{k}, \cdots, c_{\ell+2}, c_{\ell+1})\neq 0$.
Thus, from Lemma \ref{lem0-1} we know that \eqref{equa44} is an ECT-system  on $(0,\pi)$. This complete the proof.

\end{proof}

\begin{prop} \label{prop3-5} Let $\mathcal{C}_{k}^{E}$, $\mathcal{S}_{k}^{E}$ and $\mathcal{D}_{k}^{E}$   be defined in \eqref{equa2-1} and $\hat{I}$ be the   interval   defined by  \eqref{equa1-4}. For any fixed   $\vartheta\in [0, \pi]$, set $J_1=[0, \vartheta]$, $J_2=[\vartheta, \pi]$,
the following statements are true for any fixed $\beta \notin \mathbf{Z}_{\leq 2}$.
\begin{itemize}
\item [(i)] For any integer $n>1$, the ordered set of functions
\begin{equation} \label{equa49-0}\left(\mathcal{C}_{0}^{J_1}, \cdots, \mathcal{C}_{n}^{J_1}, \mathcal{S}_{n}^{J_1}, \cdots, \mathcal{S}_{1}^{J_1},\mathcal{C}_{0}^{J_2}, \cdots, \mathcal{C}_{n}^{J_2}\right) \end{equation}
is an ECT-system on   $\hat{I}$.
\item [(ii)] For any integer $n>m$, the ordered set of functions\begin{equation} \label{equa49-1}\bigg(\mathcal{C}_{0}^{J_1}, \mathcal{C}_{1}^{J_1}, \mathcal{S}_{1}^{J_1}, \mathcal{D}_{1}^{J_1}, \cdots, \mathcal{C}_{m-1}^{J_1}, \mathcal{S}_{m-1}^{J_1}, \mathcal{D}_{m-1}^{J_1},
  \mathcal{C}_{m}^{J_1}, \cdots, \mathcal{C}_{n}^{J_1}, \mathcal{S}_{n}^{J_1}, \cdots, \mathcal{S}_{m}^{J_1},\mathcal{C}_{0}^{J_2}, \mathcal{C}_{1}^{J_2},  \cdots, \mathcal{C}_{n}^{J_2}\bigg)
  \end{equation}
is an ECT-system on  $\hat{I}$.
\item [(iii)]  For any integer $n>m$, the ordered set of functions
\begin{equation} \label{equa49-2}\begin{split}&\bigg(\mathcal{C}_{0}^{J_1}, \mathcal{C}_{1}^{J_1}, \mathcal{S}_{1}^{J_1}, \mathcal{D}_{1}^{J_1},  \cdots, \mathcal{C}_{m-1}^{J_1}, \mathcal{S}_{m-1}^{J_1}, \mathcal{D}_{m-1}^{J_1}, \mathcal{C}_{m}^{J_1}, \cdots, \mathcal{C}_{n}^{J_1}, \mathcal{S}_{n}^{J_1}, \cdots, \mathcal{S}_{m}^{J_1},\\
&\mathcal{C}_{0}^{J_2}, \mathcal{C}_{1}^{J_2}, \mathcal{S}_{1}^{J_2}, \mathcal{D}_{1}^{J_2},\cdots, \mathcal{C}_{m-1}^{J_2}, \mathcal{S}_{m-1}^{J_2}, \mathcal{D}_{m-1}^{J_2}, \mathcal{C}_{m}^{J_2}, \cdots, \mathcal{C}_{n}^{J_2}\bigg)
 \end{split}  \end{equation}
is an ECT-system on  $\hat{I}$.
\end{itemize}
Here,  we stipulate that, when  $\vartheta=0$ (resp. $\vartheta=\pi$), the functions $\mathcal{C}_{j}^{J_1}, \mathcal{S}_{j}^{J_1}, \mathcal{D}_{j}^{J_1}$ (resp. $\mathcal{C}_{j}^{J_2}, \mathcal{S}_{j}^{J_2}, \mathcal{D}_{j}^{J_2}$ ) are removed from  the ordered set \eqref{equa49-0}-\eqref{equa49-2}.
\end{prop}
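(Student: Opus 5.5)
The plan is to derive all three statements from the piecewise Chebyshev criterion of \cite{HLZ2023}, which transfers the ECT property from integrands to integrals of the form $\int_{J}f(\theta)\left(1+\rho-\cos\theta\right)^{\beta}d\theta$. Set $g(\theta)=1-\cos\theta$. It is increasing on $(0,\pi)$, and on each component of $\hat I$ the base $\rho+g(\theta)$ keeps a fixed sign, so the kernel $(\rho+g)^{\beta}$ is analytic in $\rho$ there.

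Every function in \eqref{equa49-0}--\eqref{equa49-2} has the form $\int_0^{\pi}F_j(\theta)(\rho+g(\theta))^{\beta}d\theta$. Here $F_j=\chi_{J_1}f_j$ for the $J_1$-block and $F_j=\chi_{J_2}f_j$ for the $J_2$-block, with $f_j$ running over $1,c_k,s_k,\theta s_k$. As I understand it, the criterion of \cite{HLZ2023} then has two hypotheses. First, the ordered list of the $J_1$-integrands must be an ECT-system on the open interval $(0,\vartheta)$. Second, the $J_2$-integrands, appended after it, must keep the Wronskian nonvanishing on $(\vartheta,\pi)$, in the appropriate sense modulo the $J_1$-block. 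Under these hypotheses, and provided $\beta$ avoids a finite set of integers, the integrals form an ECT-system in $\rho$. The criterion may be phrased for $\beta$ large, or for the quotients $f_j/g'=f_j/\sin\theta$. In that case I will first use $\partial_\rho\int F(\rho+g)^{\beta+1}=(\beta+1)\int F(\rho+g)^{\beta}$, together with Lemma \ref{lem3-3}, to pass between $\beta$ and $\beta+1$ by integration by parts. This is exactly where $\beta\notin\mathbf Z_{\le 2}$ enters: it guarantees that none of the factors $\beta+1,\beta+2,\dots$ introduced along the way vanishes.

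The integrand hypotheses on the $J_1$-block are supplied by earlier results. For (i) they follow from Lemma \ref{lem3-1}, namely that $(1,c_1,\dots,c_n,s_n,\dots,s_1)$ is ECT on $(0,\pi)$, hence on $(0,\vartheta)$. For (ii) and (iii) they follow from Proposition \ref{prop3-3} with $n_0=m-1$, $k_0=n$ and $\ell_0=m-1$. The $J_2$-block consists of $(1,c_1,\dots,c_n)$ for (i) and (ii), and of $(1,c_1,s_1,\theta s_1,\dots,c_{m-1},s_{m-1},\theta s_{m-1},c_m,\dots,c_n)$ for (iii). For it I would use that the cosines $c_k$ are symmetric under $\theta\mapsto-\theta$; after the substitution $u=\cos\theta$ they become polynomials in $u$. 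The extra sine terms form a sub-list of Proposition \ref{prop3-3} with $\ell_0=n$. Each block is therefore ECT on its own subinterval, which I expect to be what the piecewise criterion requires. The degenerate cases $\vartheta\in\{0,\pi\}$ reduce to the single-block statement.

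The main obstacle is the compatibility across the separation point $\vartheta$. On a single interval the criterion is classical, but with two adjacent zones sharing the kernel one must exclude a cancellation. A combination of $J_1$-integrals could equal, as a function of $\rho$, a combination of $J_2$-integrals, because both are governed by the same endpoint behaviour in $\rho$. My first attempt is to expand each integral near the singular endpoint $\rho\to 0$ (respectively $\rho\to-2$), or to differentiate enough times in $\rho$. The $J_1$-integrals then carry the singular term $\rho^{\beta+1/2+j}$ coming from $\theta=0$, whereas the $J_2$-integrals are analytic at $\rho=0$. This separates the two blocks and should reduce the full Wronskian to a product of the two block Wronskians, up to nonzero factors that are rational in $\beta$.
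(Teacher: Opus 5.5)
There is a genuine gap, and it sits exactly at the step you call the main obstacle.

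Your handling of the integrands is correct and is the first half of the paper's argument. On $J_1$ the lists are CT-systems by Lemma~\ref{lem3-1} and Proposition~\ref{prop3-3}. On $J_2$ the list $(1,c_1,\dots,c_n)$, or the initial segment of the list of Proposition~\ref{prop3-3} ending at $c_n$, is a CT-system.

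The gap comes from misremembering the criterion of \cite{HLZ2023}. Its Theorem~1 imposes no condition on the $J_2$-integrands ``modulo the $J_1$-block''. Its two hypotheses are:
\begin{itemize}
\item[(H.1)] each block $(f_{i,0},\dots,f_{i,m_i})$ is a CT-system on its own interval $E_i$;
\item[(H.2)] for each fixed $y\in\hat I$, the kernel derivatives $(G,\partial_yG,\dots,\partial_y^{M}G)$, with $M=\mathrm{Card}(\mathcal F)-1$, form a CT-system in $t$ on the \emph{whole} interval containing $J_1\cup J_2$.
\end{itemize}
It is (H.2), a condition on the shared kernel over all of $(0,\pi)$, that controls the interaction across $\vartheta$. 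Your proposal never states or checks it. For $G=(1+y-\cos t)^{\beta}=g^{\beta}$ it is a short computation. We have $\partial_y^jG=\beta(\beta-1)\cdots(\beta-j+1)g^{\beta-j}$. So the discrete Wronskian at $t_0<\dots<t_M$ is these falling-factorial constants times $\prod_i g(t_i)^{\beta}$ times a Vandermonde determinant in the values $g(t_i)^{-1}$. This is nonzero because $g(\cdot,y)$ is strictly increasing and nonvanishing on $(0,\pi)$ for $y\in\hat I$.

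The substitute you propose does not close the gap. Separating the blocks by their behaviour as $\rho\to0$ (non-analytic terms from $\theta=0$ in the $J_1$-integrals versus analytic $J_2$-integrals) gives at best linear independence of the full list, or nonvanishing Wronskians near that endpoint. The ECT property requires $W[\bm f_k](\rho)\neq0$ at every $\rho\in\hat I$. Nothing in the asymptotics makes the Wronskian of the concatenated list factor into the two block Wronskians away from the endpoint, so cross-block compatibility is asserted, not proved.

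Your account of where the restriction on $\beta$ enters is also off. The nondegeneracy actually used is $\beta(\beta-1)\cdots(\beta-M+1)\neq0$, produced by differentiating the kernel in $\rho$, so $\beta$ must avoid small nonnegative integers. The paper's computation is stated for $\beta\notin\mathbf Z^+$, consistent with the standing assumption $\beta\notin\mathbf Z_{\ge0}$ in \eqref{equa2-1}. For such an integer $\beta$, every function in the list is a polynomial in $\rho$ of degree at most $\beta$, and the Wronskians of long initial segments vanish identically. The factors $\beta+1,\beta+2,\dots$ you cite are not where the condition is used.
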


\begin{proof}  For the sake of brevity we only provide the proof of statement (iii) since the proofs  of (i) and (ii) are similar.

   We first recall the  Theorem 1 of \cite{HLZ2023}:   Suppose that $E_0,\ldots, E_n$ are non-intersecting intervals, and $E$ is an open interval that contains $\bigcup^n_{i=0} E_i$ in its interior.
Suppose that $U$ is also an open interval, and there exists a $C^{\omega}$ function $G=G(t,y)$ defined on $E\times U$. Let $G_i=G|_{E_i\times U}$ for $i=0,\ldots,n$.
Then the ordered set of functions
\begin{equation}\label{eqhlz2}
\mathcal{F}=\bigcup^{n}_{i=0}\{I_{i,0},I_{i,1},\ldots,I_{i,m_i}\},
\end{equation}
where
\begin{equation}\label{eqhlz3}
I_{i,j}(y):=\int_{E_i}f_{i,j}(t)G_{i}(t,y)dt,
\end{equation}   form an ECT-system on $U$ if the following hypotheses are satisfied:
\begin{itemize}
  \item [(H.1)] For each $i\in\{0,\ldots,n\}$, the  ordered functions $(f_{i,0},\cdots,f_{i,m_i})$ form  a CT-system on $E_i$.
  \item [(H.2)] For each $y\in U$, the  ordered functions $(G,\partial_y G, \cdots,\partial^{M}_y G)$ form  a CT-system in the variable $t$ on $E$, where $M={\rm Card}(\mathcal F)-1=m_0+m_1+\cdots+m_n+n-1$.
\end{itemize}

We will apply this result to prove the conclusion (iii). First,  let $G(t,y)=(1+y-\cos t)^{\beta}:=g(t,y)^{\beta}$, $t\in (0, \pi), y\in \hat{I}$. It is clear that  $G$  is monotonic in $t$ for each fixed $y$.
For any fixed positive integer $k$,  consider the discrete Wronskian   of $\{G,\partial_y G,\cdots,\partial^{k}_y G\}$ at $(t_0,\ldots,t_{k})$
with $0<t_0<t_1<\cdots<t_{k}<\pi$.
 By direct computation we obtain that for $\beta \notin \mathbf{Z}^+$,
  \begin{equation*}
\begin{split}
  D_k(y):&=D[G,\partial_y G,\cdots,\partial^{k}_y G;t_0,\ldots,t_{k}](y) \\
 & =\beta^k(\alpha-1)^{k-1}\cdots(\beta-k+1)\left|
  \begin{array}{ccccccc}
  g^{\beta}(t_0,y)    &g^{\beta-1}(t_0,y)          &\ldots   &g^{\beta-k}(t_0,y)\\
  g^{\beta}(t_1,y)      &g^{\beta-1}(t_1,y)       &\ldots   &g^{\beta-k}(t_1,y)   \\
  \vdots            &\vdots      &\ddots   &\vdots                 \\
 g^{\beta}(t_{k},y)      &g^{\beta-1}(t_k,y)   &\ldots   & g^{\beta-k}(t_{k},y)    \\
\end{array}
  \right|\\
&= (-1)^{(k+1)k/2}\prod_{j=0}^{k-1}(\beta-j)^{k-j}\prod_{i=0}^{k}g^{\beta-k}(t_i,y)\prod_{0\leq i<j\leq k}\left(g(t_j,y)-g(t_i,y)\right)\neq 0.
\end{split}\end{equation*}

Thus, by Lemma  \ref{lem0-1}, for each $y\in \hat{I}$, the  ordered functions $\{G,\partial_y G, \cdots,\partial^{k}_y G\}$ form  a CT-system in the variable $t$ on $(0, \pi)$.

Let $E_0=J_1, E_1=J_2$, $I_{0,j}(y):=\int_{E_0}f_{0,j}(t)G(t,y)dt$  and $I_{1,j}(y):=\int_{E_1}f_{1,j}(t)G(t,y)dt$ with
\begin{equation*}\left(f_{0,0}(t),\cdots,f_{0,m+2n-1}(t)\right)=\left(1, c_1, s_1,t s_1, \cdots,   c_{m-1}, s_{m-1}, t s_{m-1},
  c_{m},   \cdots, c_{n}, s_{n},\cdots, s_{m}\right)\end{equation*}
and
\begin{equation*}\left(f_{1,0}(t),\cdots,f_{1,2m+n-2}(t)\right)=\left(1, c_1, s_1, t s_1, \cdots,   c_{m-1}, s_{m-1}, t s_{m-1}, c_m, \cdots, c_{n}\right)\end{equation*}

From Proposition  \ref{prop3-3}, $(f_{0,0}, f_{0,1}, \cdots, f_{0,m+2n-1})$ is a CT-system on $E_0$,  $(f_{1,0}, f_{0,1}, \cdots, f_{0,2m+n-2})$ is a CT-system on $E_1$.
Thus,   it follows from  Theorem 1 of \cite{HLZ2023} that the ordered set of functions
  \eqref{equa49-2} is an ECT-system on the interval  $\hat{I}$. This verify the conclusion (iii).\end{proof}

Next, in order to study the number of zeros of $M_2$ for the case that $\theta_1=\pi$, we need the following result.

\begin{prop} \label{prop3-7} Let $E=[0,\pi]$,  $\mathcal{C}_{k}^{E}$, $\mathcal{S}_{k}^{E}$    be defined in \eqref{equa2-1} and $\hat{I}$ be the   interval   defined by  \eqref{equa1-4}.  Then for any   fixed $\beta \notin \mathbf{Z}_{\leq 2}$,
the ordered set of functions
 \begin{equation} \label{equa60-1}\left(1,\rho^{\beta}, (\rho+2)^{\beta}, \mathcal{C}_{0}^{E}(\rho,\beta), \cdots, \mathcal{C}_{2m-1}^{E}(\rho,\beta), \mathcal{S}_{2m-1}^{E}(\rho,\beta), \cdots, \mathcal{S}_{1}^{E}(\rho,\beta)\right) \end{equation}
is an ECT-system on   $\hat{I}$.
\end{prop}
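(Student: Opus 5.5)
The plan is to reduce the statement to Theorem 1 of \cite{HLZ2023}, as in the proof of Proposition \ref{prop3-5}. The trick is to write the three extra functions $1,\rho^{\beta},(\rho+2)^{\beta}$ as "integrals" over degenerate or auxiliary pieces. Two devices are available, and I would use the second.

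\emph{Device 1 (degenerate intervals).} Note that $\rho^{\beta}=G(0,\rho)$ and $(\rho+2)^{\beta}=G(\pi,\rho)$, where $G(t,\rho)=(1+\rho-\cos t)^{\beta}$. Thus they are evaluations of $G$ at the endpoints of $[0,\pi]$, which are limits of $\frac1\epsilon\int_{E_i}G\,dt$ over shrinking intervals.

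\emph{Device 2 (integration by parts).} Differentiate once, which removes the constant $1$. By Lemma \ref{lem3-3}, it suffices to show that
$$\left(\beta\rho^{\beta-1},\ \beta(\rho+2)^{\beta-1},\ \partial_\rho\mathcal C_0^E,\dots,\partial_\rho\mathcal C_{2m-1}^E,\ \partial_\rho\mathcal S_{2m-1}^E,\dots,\partial_\rho\mathcal S_1^E\right)$$
is an ECT-system on each component of $\hat I$. Here $\partial_\rho \mathcal C_k^E(\rho,\beta)=\beta\,\mathcal C_k^E(\rho,\beta-1)$, and similarly for $\mathcal S_k^E$.

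Next I would rewrite the endpoint terms so that everything becomes an integral with kernel $g^{\beta-1}$. Integrating by parts gives
$$\int_0^\pi \sin t\,(1+\rho-\cos t)^{\beta-1}dt=\frac{(\rho+2)^{\beta}-\rho^{\beta}}{\beta}.$$
More usefully, the terms $\rho^{\beta-1},(\rho+2)^{\beta-1}$ can be approximated by $\frac1\epsilon\int_{[0,\epsilon]}g^{\beta-1}dt$ and $\frac1\epsilon\int_{[\pi-\epsilon,\pi]}g^{\beta-1}dt$. Non-vanishing of Wronskians is preserved in the limit only if we get a uniform bound, so instead I would prove directly that the Wronskian is nonzero.

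Concretely, I would follow the proof of Theorem 1 in \cite{HLZ2023}: expand the Wronskian multilinearly in the columns given by integrals. Each integral column contributes an integration variable $t_j\in[0,\pi]$, and each point-evaluation column contributes a fixed node ($t=0$ or $t=\pi$). The Wronskian then becomes
$$\int \det\big(f_j(t_i)\big)\, D[G,\partial_\rho G,\dots;\,0,t_1,\dots,t_N,\pi]\,dt_1\cdots dt_N,$$
after symmetrization over orderings $0<t_1<\dots<t_N<\pi$. The second factor has constant sign, by the Vandermonde-type computation already done in the proof of Proposition \ref{prop3-5}, since $g(\cdot,\rho)$ is strictly monotone on $[0,\pi]$. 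This computation includes the endpoints, and it is valid for $\beta\notin\mathbf Z_{\le 2}$ for all required orders. For the first factor, the integrands $(1,c_1,\dots,c_{2m-1},s_{2m-1},\dots,s_1)$ must form a CT-system on $(0,\pi)$, which holds by Lemma \ref{lem3-1}.

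The main obstacle is the symmetrization step with fixed nodes at the endpoints. Here the endpoint evaluation columns play the role of "functions" equal to point masses at $0$ and $\pi$. The generalized determinant $\det(f_j(t_i))$ acquires extra rows for these nodes, and the resulting system is essentially $(\delta_0,\delta_\pi, 1,c_1,\dots,s_1)$. So I need to check that its discrete Wronskian has constant sign over ordered nodes $0=\tau_0<t_1<\dots<t_N<\tau_{N+1}=\pi$. Expanding along the delta rows reduces this to the CT property of the trigonometric family on the open interval $(0,\pi)$ (Lemma \ref{lem3-1}), up to a sign fixed by the ordering. I expect this to go through. The same argument must be repeated for each initial segment of the ordered set, which gives the ECT property through Lemma \ref{lem0-1}(ii). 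If this direct approach becomes messy, the fallback is Device 1 with $E_0=[0,\epsilon]$, $E_1=[\epsilon,\pi-\epsilon]$, $E_2=[\pi-\epsilon,\pi]$, applying \cite{HLZ2023} for each $\epsilon$. One would then extract a limit Wronskian bound using analyticity in $\rho$ together with the sign-definiteness obtained above.
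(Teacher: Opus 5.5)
Your route is correct, and it differs from the paper's.

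\textbf{How the paper argues.} The paper removes the three extra functions one at a time using ECT-preserving operations:
\begin{itemize}
\item Lemma~\ref{lem3-3} removes $1$.
\item Dividing by $\rho^{\beta-1}$ and setting $\rho_1=1/\rho$ turns $\rho^{\beta-1}$ into a constant, which Lemma~\ref{lem3-3} removes again.
\item Setting $\rho_2=2\rho_1+1$, dividing by $\rho_2^{\beta-2}$ and setting $\rho_3=1/\rho_2$ turns $(1+2\rho_1)^{\beta-2}$ into a constant, removed a third time.
\end{itemize}
What remains are pure integrals with weight $\sin^2\theta$ and kernel $(1-\cos\theta)^{\beta-3}\bigl(1+\rho_3\cot^2\frac{\theta}{2}\bigr)^{\beta-3}$. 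The paper then applies Theorem~2 of \cite{HLZ2023} with the monotone function $-\cot^2\frac{t}{2}$.

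\textbf{How you argue.} You apply Lemma~\ref{lem3-3} once and then compute the Wronskian directly. The step you flag as the main obstacle is in fact immediate, and no ``delta rows'' are needed. The columns $G(0,\rho)=\rho^{\beta-1}$ and $G(\pi,\rho)=(\rho+2)^{\beta-1}$ are never integrated, so the multilinear expansion and the symmetrization involve only the $N$ integration variables. This gives
\[
W(\rho)=\pm\int_{0<t_1<\cdots<t_N<\pi}\det\bigl(f_j(t_k)\bigr)\,D\bigl[G,\partial_\rho G,\dots,\partial_\rho^{N+1}G;\,0,t_1,\dots,t_N,\pi\bigr]\,dt_1\cdots dt_N ,
\]
with the sign coming only from moving the $\pi$-column to the last position.
\begin{itemize}
\item The first factor is the discrete Wronskian of $(1,c_1,\dots,c_{2m-1},s_{2m-1},\dots,s_1)$ at interior nodes. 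It is nonzero and of constant sign by Lemma~\ref{lem3-1}.
\item The second factor is the Vandermonde expression. It is nonzero and of constant sign because $t\mapsto 1+\rho-\cos t$ is strictly monotone on the closed interval $[0,\pi]$ and does not vanish there for $\rho\in\hat I$.
\item Initial segments work the same way; the first two are the case $N=0$. So the shrinking-interval fallback is unnecessary.
\end{itemize}

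\textbf{What each route buys.} The paper uses only black-box results, but its substitutions are tailored to this particular kernel. Your argument is shorter and self-contained: it is essentially the proof of Theorem~1 of \cite{HLZ2023} extended to point masses. It also explains why $\rho^{\beta}$ and $(\rho+2)^{\beta}$ fit in: they are the boundary values of the kernel. It works because the fixed nodes $0$ and $\pi$ are extremal and never interleave with the integration nodes.

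\textbf{One correction on the exponent.} After the differentiation, the Vandermonde constant for the full system is $\prod_{j=0}^{4m-1}(\beta-1-j)^{4m-j}$. This vanishes for $\beta\in\{1,\dots,4m\}$, so your remark that the computation is valid ``for $\beta\notin\mathbf Z_{\le 2}$'' fails for $\beta\in\{3,\dots,4m\}$.

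This is a defect of the statement itself, not only of your write-up. For such $\beta$, all $4m+2$ functions are polynomials in $\rho$ of degree at most $\beta\le 4m$, hence linearly dependent. The intended hypothesis is $\beta\notin\mathbf Z_{\ge 0}$, which is what is used with $\beta=\alpha$. Under it (indeed for every $\beta\notin\{0,\dots,4m\}$) your argument is complete.
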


\begin{proof}
Denote by $\beta_i=\beta-i$ for $i=1,2,3$ and $\rho_1=\rho^{-1}$, $\rho_2=2\rho_1+1$, $\rho_3=\rho_2^{-1}$.

From Lemma \ref{lem3-3}, the ordered set of functions  \eqref{equa60-1} is  an ECT-system on   $\hat{I}$ if and only if

\begin{equation} \label{equa60-2}\left(\rho^{\beta_1}, (\rho+2)^{\beta_1}, \mathcal{C}_{0}^{E}(\rho,\beta_1), \cdots, \mathcal{C}_{2m-1}^{E}(\rho,\beta_1), \mathcal{S}_{2m-1}^{E}(\rho,\beta_1), \cdots, \mathcal{S}_{1}^{E}(\rho,\beta_1)\right) \end{equation}
is an ECT-system on   $\hat{I}$. Multiplying   by $\rho^{-\beta_1}$ each function in \eqref{equa60-2}, it turns out  that \eqref{equa60-1} is  an ECT-system on   $\hat{I}$ if and only if
\begin{equation} \label{equa60-3}\left(1, (1+2\rho_1)^{\beta_1}, \mathcal{C}_{1,0}^{E}(\rho_1,\beta_1), \cdots, \mathcal{C}_{1,2m-1}^{E}(\rho_1,\beta_1), \mathcal{S}_{1,2m-1}^{E}(\rho_1,\beta_1), \cdots, \mathcal{S}_{1,1}^{E}(\rho_1,\beta_1)\right) \end{equation}
is  an ECT-system on   $\hat{I}_1$,  where $\hat{I}_1=\{ \rho:  \rho^{-1}\in \hat{I} \}$,  and
 \begin{equation*}
\begin{split}
&\mathcal{C}_{1,k}^{E}(\rho,\beta)=\int_{E}\cos (k\theta)\left(1+\rho(1-\cos\theta)\right)^{\beta}d\theta,\ \ k=0,1, \cdots, 2m-1, \\
&\mathcal{S}_{1,k}^{E}(\rho,\beta)=\int_{E}\sin (k\theta)\left(1+\rho(1-\cos\theta)\right)^{\beta}d\theta, \ \ k=1, 2,  \cdots, 2m-1.
\end{split}
\end{equation*}

Again, by Lemma \ref{lem3-3},  \eqref{equa60-3} is  an ECT-system on   $\hat{I}_1$ if and only if
\begin{equation} \label{equa60-4}\left((1+2\rho_1)^{\beta_2}, \mathcal{C}_{2,0}^{E}(\rho_1,\beta_2), \cdots, \mathcal{C}_{2,2m-1}^{E}(\rho_1,\beta_2), \mathcal{S}_{2,2m-1}^{E}(\rho_1,\beta_2), \cdots, \mathcal{S}_{2,1}^{E}(\rho_1,\beta_2)\right) \end{equation}
is  an ECT-system on   $\hat{I}_1$, where
 \begin{equation*}
\begin{split}
&\mathcal{C}_{2,k}^{E}(\rho,\beta)=\int_{E}\cos (k\theta)(1-\cos \theta)\left(1+\rho(1-\cos\theta)\right)^{\beta}d\theta, \ \ k=0,1, \cdots, 2m-1, \\
&\mathcal{S}_{2,k}^{E}(\rho,\beta)=\int_{E}\sin (k\theta)(1-\cos \theta)\left(1+\rho(1-\cos\theta)\right)^{\beta}d\theta, \ \ k=1, 2,  \cdots, 2m-1.
\end{split}
\end{equation*}
By the transformation $\rho_2=2\rho_1+1$,  \eqref{equa60-4} is  an ECT-system on   $\hat{I}_1$ if and only if
\begin{equation} \label{equa60-5}\left(\rho_2^{\beta_2}, \mathcal{C}_{3,0}^{E}(\rho_2,\beta_2), \cdots, \mathcal{C}_{3,2m-1}^{E}(\rho_2,\beta_2), \mathcal{S}_{3,2m-1}^{E}(\rho_2,\beta_2), \cdots, \mathcal{S}_{3,1}^{E}(\rho_2,\beta_2)\right) \end{equation}
is  an ECT-system on   $\hat{I}_2$,  where $\hat{I}_2=\{ 2\rho+1:  \rho\in \hat{I}_1 \}$ and
 \begin{equation*}
\begin{split}
& \quad \quad \mathcal{C}_{3,k}^{E}(\rho,\beta)=2\mathcal{C}_{2,k}^{E}(\rho_1,\beta)\big|_{2\rho_1+1\to \rho}=\int_{E}\cos (k\theta)(1-\cos \theta)\left(1+\cos\theta+\rho(1-\cos\theta)\right)^{\beta}d\theta,\\
&  k=0,1, \cdots, 2m-1,\quad \quad\quad \quad\quad \quad\quad \quad\quad \quad\quad \quad\quad \quad\quad \quad\quad \quad\quad \quad\quad \quad\quad \quad\quad \quad\quad \quad\quad \quad\quad \quad\quad \quad \\
&\quad \quad \mathcal{S}_{3,k}^{E}(\rho,\beta)=2\mathcal{S}_{2,k}^{E}(\rho_1,\beta)\big|_{2\rho_1+1\to \rho}=\int_{E}\sin (k\theta)(1-\cos \theta)\left(1+\cos\theta+\rho(1-\cos\theta)\right)^{\beta}d\theta,\\
&   k=1, 2,  \cdots, 2m-1.
\end{split}
\end{equation*}

By multiplying   by $\rho_2^{-\beta_2}$ each function in \eqref{equa60-5}, it yields that  \eqref{equa60-5} is  an ECT-system on   $\hat{I}_2$ if and only if

\begin{equation} \label{equa60-6}\left(1, \mathcal{C}_{4,0}^{E}(\rho_3,\beta_2), \cdots, \mathcal{C}_{4,2m-1}^{E}(\rho_3,\beta_2), \mathcal{S}_{4,2m-1}^{E}(\rho_3,\beta_2), \cdots, \mathcal{S}_{4,1}^{E}(\rho_3,\beta_2)\right) \end{equation}
is  an ECT-system on   $\hat{I}_3$,  where $\hat{I}_3=\{ \rho^{-1}:  \rho\in \hat{I}_2 \}$ and
\begin{equation*}
\begin{split}
&\mathcal{C}_{4,k}^{E}(\rho,\beta)=\int_{E}\cos (k\theta)(1-\cos \theta)\left(1-\cos\theta+\rho(1+\cos\theta)\right)^{\beta}d\theta,\ \ k=0,1, \cdots, 2m-1, \\
&\mathcal{S}_{4,k}^{E}(\rho,\beta)=\int_{E}\sin (k\theta)(1-\cos \theta)\left(1-\cos\theta+\rho(1+\cos\theta)\right)^{\beta}d\theta, \ \ k=1, 2,  \cdots, 2m-1.
\end{split}
\end{equation*}

 Furthermore,   from  Lemma \ref{lem3-3},   \eqref{equa60-6} is  an ECT-system on   $\hat{I}_3$ if and only if
 \begin{equation} \label{equa60-7}\left(\mathcal{C}_{5,0}^{E}(\rho_3,\beta_3), \cdots, \mathcal{C}_{5,2m-1}^{E}(\rho_3,\beta_3), \mathcal{S}_{5,2m-1}^{E}(\rho_3,\beta_3), \cdots, \mathcal{S}_{5,1}^{E}(\rho_3,\beta_3)\right) \end{equation}
is  an ECT-system on   $\hat{I}_3$,
where
\begin{equation}\label{defCS5}
\begin{split}
&\mathcal{C}_{5,k}^{E}(\rho,\beta)=\int_{E}\cos (k\theta)\sin^2 \theta\left(1-\cos\theta+\rho(1+\cos\theta)\right)^{\beta}d\theta, \ \ k=0,1, \cdots, 2m-1, \\
&\mathcal{S}_{5,k}^{E}(\rho,\beta)=\int_{E}\sin (k\theta)\sin^2 \theta\left(1-\cos\theta+\rho(1+\cos\theta)\right)^{\beta}d\theta, \ \ k=1, 2,  \cdots, 2m-1.
\end{split}
\end{equation}

Next, we shall use the Theorem 2 of \cite{HLZ2023} to  prove that \eqref{equa60-7} is  an ECT-system on   $\hat{I}_3$. To this end, let's rewrite the functions   in \eqref{defCS5} as
\begin{equation*}\label{defCS6}
\begin{split}
&\mathcal{C}_{5,k}^{E}(\rho,\beta)=\int_{E}\cos (k\theta)\sin^2\theta (1-\cos\theta)^\beta\left(1+\rho \cot^2\frac{\theta}{2}\right)^{\beta}d\theta, \ \ k=0,1, \cdots, 2m-1, \\
&\mathcal{S}_{5,k}^{E}(\rho,\beta)=\int_{E}\sin (k\theta)\sin^2\theta (1-\cos\theta)^\beta\left(1+\rho \cot^2\frac{\theta}{2}\right)^{\beta}d\theta, \ \ k=1, 2,  \cdots, 2m-1.
\end{split}
\end{equation*}

 Recall that the Theorem 2 of \cite{HLZ2023} says that     the ordered set of functions
 \eqref{eqhlz2},
where $I_{i,j}(y)$ is defined by
 \eqref{eqhlz3}
,   form an ECT-system on $U$ if
 $G_i$'s are of the form
  \begin{align*}\label{eq3}
\begin{split}
   G=\frac{1}{(1-yg(t))^{\beta}},\ \quad
  G_i=G|_{E_i\times U},
\end{split}
  \end{align*}
  with $g$ being monotonic on $E$ and $\beta\in\big(\mathbb R\backslash\mathbb Z^-_0\big)\bigcup\big(\mathbb Z^-_0\cap(-\infty,1-{\rm Card(\mathcal F)}]\big)$ and   for each $i\in\{0,\ldots,n\}$ the ordered set of functions $\{f_{i,0},\ldots,f_{i,m_i}\}$ is a CT-system on $E_i$.

  In order to apply  the Theorem 2 of \cite{HLZ2023} for  \eqref{equa60-7}, we let
  \begin{equation}\label{usthm2ofHLZ-1}
\left(f_{0,0},\cdots,f_{0,4m-1}\right)=\left(\xi, \xi c_1,  \cdots,   \xi c_{2m-1}, \xi s_{2m-1}, \cdots, \xi s_{1}\right),
\end{equation}
where $\xi(t)=\sin^2t (1-\cos t)^{\beta_3}$, and let $G(t,y)=\left(1-yg(t)\right)^{\beta_3}$, where
$g(t)=-\cot^2\frac{t}{2}$.
 From Lemma \ref{lem3-1}, it is easy to see that   \eqref{usthm2ofHLZ-1} is an  ECT-system on  $E$.
  Since  $g(t)$ is monotonic on $(0, \pi)$, we conclude that  \eqref{equa60-7} is  an ECT-system on   $\hat{I}_3$.  This means that
 \eqref{equa60-1} is  an ECT-system on   $\hat{I}$.

\end{proof}

\section{ Proof of the main results}

The goal of this section is to prove Theorem \ref{thm1-1}.  Theorem \ref{Theorem1-3} can be regarded as a direct corollary of Theorem \ref{thm1-1}.
 For the sake of brevity, throughout this section we will denote $\mathcal{C}_{k}^{E}(\rho,\alpha),
\mathcal{S}_{k}^{E}(\rho,\alpha), \mathcal{D}_{k}^{E}(\rho,\alpha)$ respectively by $\mathcal{C}_{k}^{E},
\mathcal{S}_{k}^{E}, \mathcal{D}_{k}^{E}$.

\begin{proof}[Proof of Theorem \ref{thm1-1}]

For   clarity we divide the proof into two parts.

(i) Consider the first order Melnikov function $M_1$ obtained in Proposition \ref{prop1}.
If $\theta_1\in (0,\pi)\cup (\pi, 2\pi)$,  then from  the conclusion (i) of Proposition \ref{prop3-5},
$$\left(\mathcal{C}_{0}^{E_1}, \cdots, \mathcal{C}_{m}^{E_1}, \mathcal{S}_{m}^{E_1}, \cdots, \mathcal{S}_{1}^{E_1},\mathcal{C}_{0}^{E_2}, \cdots, \mathcal{C}_{m}^{E_2}\right)$$
with $\beta=\alpha-1$, is an ECT-system. Thus, by Lemma \ref{lem3-3},   the ordered set of functions
$$\left(1, \mathcal{C}_{0}^{E_1}, \cdots, \mathcal{C}_{m}^{E_1}, \mathcal{S}_{m}^{E_1}, \cdots, \mathcal{S}_{1}^{E_1},\mathcal{C}_{0}^{E_2}, \cdots, \mathcal{C}_{m}^{E_2}\right),$$
with $\beta=\alpha$, is an ECT-system. Note that
$$p_{10},  \mu_0^{(1)}, \cdots,  \mu_{m}^{(1)}; \mu_0^{(2)}, \cdots,\mu_{m}^{(2)}; \lambda_1, \cdots,\lambda_{m}$$
are independent, where
$  \mu_k^{(1)}=\tilde{d}_{1k}^++\tilde{d}_{1k}^-,  \mu_k^{(2)}=2\tilde{d}_{1k}^-$ for $k=0,1,\cdots, m$ and $\lambda_{k}=\tilde{c}_{1k}^+-\tilde{c}_{1k}^-$ for $k=1,\cdots, m$,
it follows  that  $M_1$ can has at most  $3m+2$ isolated zeros  on $\hat{I}$,  counting multiplicity. Moreover, this upper bound can be reached
for suitable coefficients.

Similarly, if $\theta_1=\pi$ (resp. $\theta_1=2\pi$), then  $M_1$ can has at most $2m+1$ (resp. $m+1$)  isolated zeros  on $\hat{I}$,  counting multiplicity, and this upper bound can be reached.

(ii) Next,
  assume  that  $M_1(\rho)\equiv 0$.  Let's  consider the second order   Melnikov function $M_2$ obtained in Proposition \ref{prop5},  Proposition \ref{pro5cor} and Proposition \ref{cor1}.
 In what follows we should split the discussion into two situations according to the position of $\theta_1$,.

Case 1.    $\theta_1\in (0,\pi)\cup (\pi, 2\pi)$.   By  Lemma  \ref{lem3-3} and the conclusion (iii) of Proposition \ref{prop3-5},
the ordered set of functions
\begin{equation*}  \begin{split}&\bigg(1, \mathcal{C}_{0}^{E_1}, \mathcal{C}_{1}^{E_1}, \mathcal{S}_{1}^{E_1}, \mathcal{D}_{1}^{E_1},  \cdots, \mathcal{C}_{m-1}^{E_1}, \mathcal{S}_{m-1}^{E_1}, \mathcal{D}_{m-1}^{E_1}, \mathcal{C}_{m}^{E_1}, \cdots, \mathcal{C}_{2m-1}^{E_1}, \mathcal{S}_{2m-1}^{E_1}, \cdots, \mathcal{S}_{m}^{E_1},\\
&\quad \mathcal{C}_{0}^{E_2}, \mathcal{C}_{1}^{E_2}, \mathcal{S}_{1}^{E_2}, \mathcal{D}_{1}^{E_2},\cdots, \mathcal{C}_{m-1}^{E_2}, \mathcal{S}_{m-1}^{E_2}, \mathcal{D}_{m-1}^{E_2}, \mathcal{C}_{m}^{E_2}, \cdots, \mathcal{C}_{2m-1}^{E_2}\bigg),
 \end{split}  \end{equation*}
with $\beta=\alpha$, is an ECT-system on  $\hat{I}$. It turns out that
 the maximum number of isolated zeros  of  $M_2$ counting multiplicity, is less or equals to $9m-4$, i.e.,  $Z_2(m)\leq 9m-4$.
Unfortunately, we can not prove that $Z_2(m)=9m-4$ due to  the dependence of the coefficients in   \eqref{equam2pro-3}.  Thus, we   turn  to study the lower bound for  $Z_2(m)$. Let $\beta_1=1 $ and $\beta_2=0$,  \eqref{equam2pro-3} becomes
\begin{equation*} \label{60}
M_2(\rho)=p_{20}+\sum_{k=1}^{m-1}\eta_{k}\mathcal{D}_{k}^{E_1}(\rho,\alpha)
+\sum_{k=1}^{2m-1}\lambda_k\mathcal{S}_{k}^{E_1}(\rho,\alpha)
+\sum_{k=0}^{2m-1}\left(\mu_k^{(1)}\mathcal{C}_{k}^{E_1}(\rho,\alpha)+\mu_k^{(2)}\mathcal{C}_{k}^{E_2}(\rho,\alpha)\right),
\end{equation*}
where   ${\eta}_1,\cdots, {\eta}_{m-1}$;  ${\lambda}_1, \cdots, {\lambda}_{2m-1}$;  ${\mu}_0^{(1)}, \cdots, {\mu}_{2m-1}^{(1)}$; ${\mu}_0^{(2)}, \cdots, {\mu}_{2m-1}^{(2)}$ are independent.
Since  the order set of functions \eqref{equa49-1}
is an ECT-system on $\hat{I}$, by applying the properties of Chebyshev systems,   and taking into account that $p_{20}$ is not can be proved to be  dependence of  ${\eta}_i, {\lambda}_j,  {\mu}_k^{(1)},  {\mu}_{\ell}^{(2)}$,  we conclude that there exists   a group of real numbers  ${\eta}_1,\cdots, {\eta}_{m-1}$;  ${\lambda}_1, \cdots, {\lambda}_{2m-1}$;  ${\mu}_0^{(1)}, \cdots, {\mu}_{2m-1}^{(1)}$; ${\mu}_0^{(2)}, \cdots, {\mu}_{2m-1}^{(2)}$ such that
$ {M}_2(\rho)$
 have exactly   $7m-3$  isolated   simple  zeros  in $ \hat{I}$.

 This verify that  $ 7m-3\leq Z_2(m)\leq 9m-4$.

Case 2.    $\theta_1=2\pi$.  By  Lemma  \ref{lem3-3} and the conclusion (i) of Proposition \ref{prop3-5},
the ordered set of functions
\begin{equation*} \bigg(1, \mathcal{C}_{0}^{E}, \mathcal{C}_{1}^{E},   \cdots, \mathcal{C}_{2m-1}^{E}\bigg) \end{equation*}
with $\beta=\alpha$ and $E=[0,\pi]$, is an ECT-system on  $\hat{I}$.    From Proposition  \ref{pro5cor},       it holds  that $Z_2(m)=2m-1$.

Case 3.    $\theta_1=\pi$. Under the assumption $(\mathbf{H})$, i.e.,  $a_{1k}^+=a_{1k}^-$ ($k=1, 2, \cdots, m$),  it follows from   Proposition \ref{cor1}  and Proposition  \ref{prop3-7} directly that
$M_2(\rho)$ can has at most $4m+1$ isolated zeros  on $\hat{I}$. This means that
  $Z_2(m)\geq 4m+1$.

The proof is finished.
\end{proof}

\begin{rem}  (i) In the proof of Theorem \ref{thm1-1}, when we consider $M_2(\rho)$ for $\theta_1\neq\pi$,   we are not able to provide the accurate value for $Z_2(m)$ due to the following difficulties.

 When $\theta_1\in (0,\pi)\cup (\pi, 2\pi)$,  the    difficulty  arise from   the  dependence  of coefficients of
$\mathcal{D}_{k}^{E_1}(\rho),$ $\mathcal{D}_{k}^{E_2}(\rho)$ and $\mathcal{S}_{k}^{E_2}(\rho)$ in    $M_2(\rho)$. This can be seen from the rearrangement of \eqref{equam2pro-3}
\begin{equation*} \label{equam2pro-33}
\begin{split}
M_2(\rho)&=p_{20}+\sum_{k=1}^{m-1}\eta_{k}\left(\beta_1\mathcal{D}_{k}^{E_1}(\rho,\alpha)
+\beta_2(\mathcal{D}_{k}^{E_2}(\rho,\alpha)-\pi\mathcal{S}_{k}^{E_2}(\rho,\alpha))\right)\\
&+\sum_{k=1}^{2m-1}\lambda_k\mathcal{S}_{k}^{E_1}(\rho,\alpha)
+\sum_{k=0}^{2m-1}\left(\mu_k^{(1)}\mathcal{C}_{k}^{E_1}(\rho,\alpha)+\mu_k^{(2)}\mathcal{C}_{k}^{E_2}(\rho,\alpha)\right).
\end{split}\end{equation*}
In this paper we only study the case that $ \beta_2=0$, the case  $ \beta_2\neq 0$
remain open for   further research.

(ii) When  $\theta_1=\pi$,   the    difficulty  is that,    the expreesion of   $M_2(\rho)$ contains not only the functions in \eqref{equa60-1}, but also the  functions of the forms
$$  \int_{0}^{\pi}\frac{1}{\sin^2 \theta} y_0^{\alpha}(\theta,\rho)d \theta,     \int_{0}^{\pi}\bigg(\frac{\theta}{\sin \theta}\bigg)' y_0^{\alpha}(\theta,\rho)d \theta,   \int_{0}^{\pi}\bigg(\frac{\cos(k\theta)}{\sin \theta}\bigg)' y_0^{\alpha}(\theta,\rho)d \theta, \int_{0}^{\pi}\bigg(\frac{\theta\cos(k\theta)}{\sin\theta}\bigg)' y_0^{\alpha}(\theta,\rho)d \theta.$$

In this paper  we can only  give the  accurate value for $Z_2(m)$ by imposing   the condition $(\mathbf{H})$.   Without this restriction, the expression of
$M_2(\rho)$ is very complex, and the estimation of  $Z_2(m)$  is still  a challenging problem.

\end{rem}

\section*{Acknowledgements}
The first  author is supported by    the NNSF of China (No.12571166),
  the NNSF
of Guangdong Province  (No.2023A1515010885),  and the project of promoting research capabilities for key constructed disciplines in Guangdong Province (No.2021ZDJS028).
The second  author is supported by    the NNSF of China (No.12271212).

\section*{Conflict of interest statement}
Authors state no conflict of  interest.

\end{document}